\documentclass[12pt,letterpaper]{amsart}%
\usepackage{amsmath}
\usepackage{amsfonts}
\usepackage{amssymb}
\usepackage{graphicx}
\usepackage{amsmath}
\usepackage{txfonts}
\usepackage{graphicx}
\usepackage{amsthm}
\usepackage{amsfonts}
\usepackage{amssymb}
\usepackage[utf8]{inputenc}
\usepackage{comment}
\usepackage{xcolor}
\usepackage{lipsum}
\usepackage{cancel}
\usepackage{mathrsfs}
\usepackage{ifthen}%
\providecommand{\U}[1]{\protect\rule{.1in}{.1in}}
\newif\ifcomments
\commentsfalse
\newboolean{bluecomments}
\setboolean{bluecomments}{true}
\newcommand{\bluecomment}[1]{\ifthenelse{\boolean{bluecomments}}{\textcolor{blue}{#1}}{}}
\newcounter{mycounter}
\numberwithin{mycounter}{section}
\newtheorem{theorem}[mycounter]{Theorem}
\theoremstyle{plain}

\newtheorem{claim}[mycounter]{Claim}

\newtheorem{corollary}[mycounter]{Corollary}

\newtheorem{example}{Example}[section]

\newtheorem{lemma}[mycounter]{Lemma}

\newtheorem{proposition}[mycounter]{Proposition}

\numberwithin{equation}{section}
\theoremstyle{definition}
\newtheorem{definition}[mycounter]{Definition}
\newtheorem{remark}[mycounter]{Remark}

\newcommand{\maxidx}{\mathord{\xi}} 

\newcommand{\note}[1]{\strut{\color{red}[#1]}}
\begin{document}
\title[Optimal Transport and Flows on KM metrics ]{Optimal Transport and Generalized Lagrangian Mean Curvature Flows on Kim-McCann Metrics}

\author{Arunima Bhattacharya, Micah Warren, and Daniel Weser}

\address{Department of Mathematics, Phillips Hall\\
 The University of North Carolina at Chapel Hill, NC }
\email{arunimab@unc.edu}

\address{Department of Mathematics\\
	University of Oregon, Eugene, OR 97403}
\email{micahw@uoregon.edu}

\address{Department of Mathematics, Phillips Hall\\
 The University of North Carolina at Chapel Hill, NC }
 \email{weser@unc.edu}

\begin{abstract}

We express the mean curvature flow of Lagrangian submanifolds in pseudo-Riemannian manifolds endowed with the Kim-McCann-Warren metric within the framework of generalized mean curvature flow on Kim-McCann manifolds. While generalized mean curvature flow has been studied in Kähler geometry, our work shows that techniques from para-Kähler geometry arise naturally in the Kim-McCann setting. Using this perspective, we prove that the Lagrangian condition is preserved along the flow. By identifying generalized mean curvature flow with Lagrangian mean curvature flow, we show that the Ma-Trudinger-Wang regularity theory applies to this setting. In particular, the cross-curvature positivity condition of Kim-McCann yields smoothly converging flows of Lagrangian submanifolds. Under the cross-curvature condition, any Lagrangian submanifold avoiding the cut locus converges exponentially to a stationary submanifold, which locally arises as the graph of an optimal transport map. Our framework substantiates the analogy between special Lagrangian geometry in almost Calabi–Yau manifolds and optimal transport theory in the Kim-McCann setting. In particular, we show that Kim–McCann manifolds equipped with a para-holomorphic volume form serve as the natural counterpart to almost Calabi-Yau manifolds.

\end{abstract}
\maketitle

\tableofcontents

\section{Introduction}

It was observed by Kim-McCann-Warren in \cite{KMW} that graphs of optimal transport maps are calibrated submanifolds of a conformal modification of the Kim-McCann metric \cite{KM} introduced in the study of
regularity theory of optimal transport. The Kim-McCann metric is an $(n,n)$ signature metric on $M \times \bar{M}$ with a natural Kähler form in which graphs of optimal transportation maps are space-like Lagrangian submanifolds.   While this suggests an analogy to the special Lagrangian geometry occurring in Calabi-Yau manifolds, neither the modified Kim-McCann-Warren metric \cite{KMW} nor the Kim-McCann metric \cite{KM} is Ricci-flat.   

In this paper, we make an observation that completes the analogy, opening up a broader bridge between the theory of Calabi-Yau metrics and Kim-McCann metrics.  The Kim-McCann metrics should be thought not to correspond to Calabi-Yau manifolds, but rather \textit{almost} Calabi-Yau manifolds, also called special K\"ahler manifolds by  Bryant (cf. Bryant \cite{bryant2000second} and Joyce \cite[pg 43]{joyce2001lectures}). The Kim-McCann-Warren metric then corresponds to conformal modifications of almost Calabi-Yau metrics, with a conformal factor chosen so that special Lagrangian calibrations exist. Such \textit{almost} Calabi-Yau metrics have been studied in the K\"ahler setting for well over two decades (see Goldstein \cite{Gold}) and give rise to generalized mean curvature flow, which flows towards a special Lagrangian submanifold, preserving the Lagrangian condition, even though the ambient metric is not Einstein. Note that mean curvature flow fails to preserve the Lagrangian condition when the ambient manifold is not Einstein (see Bryant \cite{Bryant}).

Here we show that the exact analogue is true in the Kim-McCann setting; the corresponding generalized mean curvature flow preserves the Lagrangian condition and is stationary on calibrated submanifolds.  This analogy is rich with interesting avenues to explore.

The Kim–McCann metric continues to be of significant interest, as reflected in the recent geometric exposition in L\'eger-Vialard \cite{LV} and the advancements in Brendle-Leg\'er-McCann-Rankin \cite{BLMR}. In this work, we build on these developments by extending the results of \cite{BLMR} from the elliptic to the parabolic setting, demonstrating that the Ma-Trudinger-Wang \cite{MTW} regularity theory ensures not only the smoothness of stationary solutions, but also the regularity of the flow.

This generalizes and combines two aspects of regularity theory. 
The first is the result of Brendle-Leg\'er-McCann-Rankin in \cite{BLMR} (see also Warren \cite{W2025} for the two-dimensional case), which shows that optimal transport maps satisfying the Ma-Trudinger-Wang condition can be proven regular using purely minimal surface techniques. Here, we generalize this from the elliptic to the parabolic setting using generalized mean curvature flow.  At the same time, we extend the parabolic Monge-Amp\`ere flow introduced in Kim-Streets-Warren \cite{KSW} to spacelike submanifolds that do not arise as the cost-exponential of a global scalar function, and show this flow is well-behaved even without the global scalar potential function.  In particular, the result can be applied to a wide variety of spacelike immersions, including non-graphical immersions.

Generalized mean curvature flow was introduced independently by Behrndt \cite{B} and Smoczyk-Wang \cite{SWAsianJMath}, with each approach highlighting different structural aspects: One centered on a torsion connection and the other on a Ricci potential. While we draw heuristic insights from these generalized flows, particularly from the para-complex setting, our analysis does not rely on their machinery. Instead, we present self-contained arguments that mirror those in the elliptic case, providing direct proofs of our main results without requiring the broader para-complex framework.
Our perspective differs from that of Chursin-Sch\"afer-Smoczyk \cite{CSS}, which, following Smoczyk-Wang \cite{SWAsianJMath}, analyzes the geometry via comparisons between different connections. In contrast, we adopt an approach more in line with Behrndt \cite{B}, emphasizing the role of calibrating forms as introduced in Kim-McCann-Warren \cite{KMW}.

We present our main results below, with formal definitions deferred to later sections to keep the introduction streamlined. 
Our first result draws inspiration from a result of Smoczyk \cite{Smo99}, subsequently extended by Behrndt \cite{B} and Smoczyk-Wang \cite{SWAsianJMath}.

\begin{theorem}
\label{thm:main1}Suppose that $M \times \bar{M}$ is a Kim-McCann manifold endowed with a para-holomorphic $n$-form $\Omega $  and $L(t)$ is a family of immersed Lagrangian submanifolds flowing by the generalized mean curvature flow.   If $L(0)$ is  Lagrangian, then so is $L(t)$ for $t>0$.   
\end{theorem}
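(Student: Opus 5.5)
The plan follows Smoczyk's argument as adapted by Behrndt, in the para-Kähler setting. Write $\omega$ for the natural symplectic (para-Kähler) form on $M\times\bar M$, which in local coordinates is $\omega = -c_{i\bar j}\,dx^i\wedge d\bar x^{j}$. Write $h$ for the Kim–McCann metric and $\Omega$ for the para-holomorphic $n$-form. Along a spacelike immersion $F_t:L\to M\times\bar M$, the restriction $F_t^*\Omega$ is a real multiple of the induced volume form. The generalized Lagrangian angle $\theta$ is defined by
\[F^*\Omega = e^{\psi}\bigl(\cosh\theta+\kappa\sinh\theta\bigr)\,d\mathrm{vol}_L ,\]
where $\kappa$ is the para-complex unit, $\kappa^2=1$, and $\psi$ is the conformal factor. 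The generalized mean curvature vector is $K = H - n\,(\nabla\psi)^{\perp}$, or whatever normalization the paper's definition gives. The generalized mean curvature flow is $\partial_t F = K$.

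\textbf{Step 1: the Lagrangian case.} First I verify the analogue of the key identity on Lagrangians. On a Lagrangian, $\iota_K\omega|_L = \pm d\theta$. Equivalently, the one-form $\iota_K\omega$ restricted to $L$ is the generalized Maslov form, and it is closed. This comes from differentiating the calibration $\operatorname{Re}(e^{-\kappa\theta}\Omega)$ in the spirit of Kim–McCann–Warren. The key input is that $\Omega$ is para-holomorphic, so $d\Omega = \kappa\,\beta\wedge\Omega$ with $\beta$ the Ricci form potential; in the Kim–McCann setting $\Omega$ is built from $\det c_{i\bar j}$.

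\textbf{Step 2: evolution of the pullback of $\omega$.} Set $\eta(t) = F_t^*\omega$. By Cartan's formula and $d\omega=0$,
\[\partial_t \eta = F_t^*\mathcal L_K\omega = d\bigl(F_t^*\iota_K\omega\bigr).\]
Without assuming Lagrangian, I decompose $F^*\iota_K\omega = \pm d\theta + (\text{terms linear in }\eta \text{ and } \nabla\eta)$. This needs a careful computation in a local frame where $\eta$ need not vanish. Concretely, I would write $J$ (with $J^2=\mathrm{Id}$) and express $K$ using the orthonormal frame of $TL$ together with its images under $J$, which span $TL\oplus JTL$ only up to an error controlled by $\eta$. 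Taking $d$ kills $d\theta$. The result is a parabolic-type system
\[\partial_t\eta = \Delta\eta + A*\eta + B*\nabla\eta,\]
with coefficients $A,B$ smooth and bounded on compact time intervals. The Laplacian here is with respect to the induced Riemannian metric, which is positive definite because $L$ is spacelike. Step 2 is the main obstacle. Para-complex signs produce indefinite terms, and the normal bundle is timelike, so one must check that the leading operator really is the Hodge Laplacian with the right sign. I would confirm this by mimicking Smoczyk's computation with $J^2=\mathrm{Id}$ and tracking where the metric signature enters through $h(JX,JY)=-h(X,Y)$.

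\textbf{Step 3: uniqueness.} Given Step 2, compute
\[\partial_t|\eta|^2 \le \Delta|\eta|^2 - 2|\nabla\eta|^2 + C|\eta|^2 + C|\eta||\nabla\eta| \le \Delta|\eta|^2 + C'|\eta|^2,\]
absorbing the gradient term by Young's inequality. In the compact case, the maximum principle with $|\eta|^2(0)=0$ gives $\eta\equiv0$ for all $t$. In the noncompact or local setting, I would instead use an energy argument with a cutoff, or invoke uniqueness for linear parabolic systems, to conclude $\eta\equiv 0$. Hence $L(t)$ remains Lagrangian.
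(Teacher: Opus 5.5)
Your proposal is correct in outline, and its skeleton matches the paper's: Cartan's formula, then a heat-type equation $\partial_t\eta=-dd^*\eta+\eta*G+\nabla\eta*G'$ for $\eta=F^*\omega$, then uniqueness from zero initial data. The key lemma, however, is obtained by a genuinely different route.

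The paper writes $\alpha_{\vec H}=-d^*\eta+\beta$ exactly (from $\hat\nabla\omega=0$). It then computes $d\beta$ and $d\alpha_{n(\hat\nabla\psi)^\perp}$ separately in special coordinates. Using the Kim--McCann curvature formula, it shows that both equal $\sum_i\bigl(c_{k\bar l\bar\imath i}-c_{l\bar k\bar\imath i}\bigr)$ modulo $\eta$-terms, so they cancel.

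You instead keep the Lagrangian angle alive on near-Lagrangian totally real immersions and let the exactness of $d\theta$ absorb all curvature. This goes through, with the parabolic sign. Work in normal coordinates and set $a_{ijk}=h(\hat\nabla_{F_i}F_j,KF_k)$, where $K$ is the paper's para-complex structure (your $J$). Then:
\begin{itemize}
\item $a_{ijk}-a_{ikj}=F_i\,\omega(F_k,F_j)$;
\item the Gram matrix of $\{\tilde KF_l\}$ is $-\delta_{kl}+O(|\eta|^2)$;
\item $n\,d\psi(KF_i)=-\alpha_{n(\hat\nabla\psi)^\perp}(F_i)+\eta*d\psi$.
\end{itemize}
Comparing $\mathbf{k}$-parts of $F_i\,\Omega(F_1,\dots,F_n)$ then gives
\[
\alpha_{\vec H-n(\hat\nabla\psi)^\perp}=d\theta-d^*\eta+\eta*G .
\]
Hence $\partial_t\eta=-dd^*\eta+\ldots$, which on closed forms is minus the nonnegative Hodge Laplacian. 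Your pointwise maximum principle for $|\eta|^2$ (after Weitzenb\"ock) replaces the paper's integrated Gronwall argument equally well on compact $L$.

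As for what each route buys: yours is shorter and avoids fourth derivatives of $c$ and special coordinates. The paper's makes the curvature cancellation explicit (a Dazord-type identity, plus the Ricci-potential role of $n\psi$), and it never needs an angle off the Lagrangian locus.

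Two ingredients, as you state them, must be corrected for Step 2 to work.
\begin{itemize}
\item The identity $d\Omega=\mathbf{k}\beta\wedge\Omega$ is false. In fact $\Omega=\tau\rho(x)\,dx+\bar\tau\bar\rho(\bar x)\,d\bar x$ is built from the densities (not from $\det c_{i\bar j}$) and is closed. What you need is the covariant identity $\hat\nabla_X\Omega=\bigl(n\,d\psi(X)+\mathbf{k}\,n\,d\psi(KX)\bigr)\Omega$. It follows from the Christoffel symbols $\Gamma^k_{ik}=\partial_i\ln\det(-c_{k\bar s})$, and as a tensor identity it also holds off the Lagrangian locus.
\item Off that locus, $F^*\Omega$ is a para-complex (not real) multiple of $d\mathrm{vol}_L$, and its modulus is no longer $e^{n\psi}$. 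So $\theta$ must be defined purely as the para-complex argument, $F^*\Omega=e^{\phi+\mathbf{k}\theta}\,d\mathrm{vol}_L$ with $\phi$ a free real function. This is possible while both the $\tau$- and $\bar\tau$-components of $F^*\Omega/d\mathrm{vol}_L$ are positive, i.e.\ near spacelike Lagrangians.
\end{itemize}
You should therefore close the argument by continuity in time. Let $t^*$ be the supremum of times up to which $\eta\equiv0$. Near $t^*$ the immersion stays $C^1$-close to a spacelike Lagrangian, so the angle exists there, and the argument restarts from $\eta(t^*)=0$.
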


For the precise definition of a Kim-McCann manifold, we refer the reader to Definition \ref{def:km}. The definition of the para-holomorphic $n$-form is given in Definition \ref{defom}, and the generalized mean curvature flow is defined in Definition \ref{def:gmcf}.


\smallskip
Our next result extends the elliptic regularity theory of Brendle-Leg\'er-McCann-Rankin \cite{BLMR} to the parabolic setting and generalizes the parabolic Monge-Ampère flow of Kim-Streets-Warren \cite{KSW} to spacelike submanifolds beyond cost-exponentials of global potentials.

\begin{theorem}
\label{thm:main2}Suppose that $M \times \bar{M}$ is a Kim-McCann manifold with a positive cross-curvature condition, and $L(t)$ is a family of compactly immersed Lagrangian submanifolds flowing by the generalized mean curvature flow.  If $L(t)$ stays in a compact set in the complement of the cut-locus, on $[0,T)$, then the flow extends beyond $T$. If the flow stays in a compact set avoiding the cut-locus for all time, then it converges to an immersed calibrated submanifold.  
\end{theorem}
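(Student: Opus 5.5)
The plan is to reduce the flow locally to a fully nonlinear parabolic scalar equation of Monge--Amp\`ere type and import the Kim--Streets--Warren estimates. By Theorem~\ref{thm:main1}, $L(t)$ stays Lagrangian. Away from the cut locus, the Kim--McCann symplectic form is $\omega = -D^2_{x\bar x}c\, dx\wedge d\bar x$, so near any point a spacelike Lagrangian $L(t)$ can be written as a $c$-exponential graph $\bar x = \text{c-exp}_x(Du(x,t))$ of a local potential $u$. The spacelike condition means $D^2u + D^2_{xx}c(x,\bar x)$ is positive definite, i.e.\ $u$ is locally $c$-convex. We then compute the generalized mean curvature vector in terms of the Lagrangian angle (the phase of $\Omega|_L$). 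The goal is to show that, up to tangential diffeomorphisms, the flow is equivalent to
\begin{equation*}
\partial_t u = \log\det\bigl(D^2 u + D^2_{xx}c(x,\text{c-exp}_x(Du))\bigr) - \log\rho(x) + \log\bar\rho(\text{c-exp}_x Du) - \log\lvert\det D^2_{x\bar x}c\rvert,
\end{equation*}
which is the parabolic optimal transport flow of \cite{KSW}, applied locally. Here $\rho,\bar\rho$ are the densities encoded in $\Omega$. The normal speed is the gradient of the angle function, which is the time derivative of the potential.

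For long-time existence, the main task is a uniform $C^2$ bound, i.e.\ a spacelike bound for the tangent planes (a lower bound on the induced volume ratio, equivalently two-sided bounds on $D^2u + D^2_{xx}c$). The steps are as follows.
\begin{enumerate}
\item Compactness away from the cut locus gives uniform bounds on $c$ and its derivatives, and on $\rho,\bar\rho$.
\item The angle (speed) satisfies a linear parabolic equation $\partial_t\theta = \Delta_g\theta$ plus lower-order terms, so the maximum principle bounds $\partial_t u$.
\item Following \cite{BLMR} and the parabolic computation in \cite{KSW}, apply the maximum principle to a quantity such as $\log$ of the largest eigenvalue of $D^2u + D^2_{xx}c$ (or the geometric quantity $\log$ of the $\star\Omega$-type slope function measuring distance from the light cone). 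Its evolution has a good sign exactly when the cross-curvature (MTW) tensor is nonnegative. Positive cross-curvature absorbs the bad cubic terms, giving a bound depending only on the compact set and the initial data.
\end{enumerate}
Once the equation is uniformly parabolic, it is concave in $D^2u$ (via $\log\det$). Krylov--Safonov and Evans--Krylov then give $C^{2,\alpha}$ bounds, and Schauder bootstrapping gives bounds on all higher derivatives. Patching finitely many local graph charts by compactness, the immersions converge smoothly as $t\to T$, and short-time existence restarts the flow past $T$.

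For convergence, use that $\Omega$ calibrates after the conformal change of \cite{KMW}: the weighted volume $\int_L \operatorname{Re}\Omega$-type functional is monotone. Its time derivative is $-\int_L |H_{\text{gen}}|^2$, up to sign. Together with the uniform smooth bounds, this yields $\int_0^\infty\int_L|H|^2<\infty$, so there is subsequential convergence to a stationary, hence calibrated, immersion. For full and exponential convergence, the plan is to use strict positivity of cross-curvature to obtain a strict linear damping in the evolution of $\theta - \bar\theta$ (or of the oscillation of $\partial_t u$), giving exponential decay.

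I expect the main obstacle to be the $C^2$ (spacelike) estimate. It requires verifying that the generalized flow's evolution of the slope quantity produces exactly the MTW tensor with the correct sign, with no global potential available. The local-chart maximum principle must also be consistent across overlapping charts. Writing the quantity geometrically, as a function on $L$ defined via the metric and $\Omega$, avoids this.
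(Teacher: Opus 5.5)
Your long-time existence argument follows the paper's route. You use the local cost-exponential potential with $u_t=-2\theta$, the maximum principle for $\theta$ (which bounds $\det DT$), a BLMR-type maximum principle for a globally defined slope quantity, and then Evans--Krylov and Schauder. For the slope quantity the paper uses the ratio $\hat S(V,V)/h(V,V)$ over unit tangent vectors of $L$, with $\hat S=m+\bar m$; its maximum sits on the $\lambda_1$ or $\lambda_n$ eigendirection. Make explicit that positive cross-curvature alone does not absorb the errors. For example, in the steep case the good term $\lambda_1^2\lambda_n^{-1}\hat R_{n\bar 1 n\bar 1}$ beats the $O(S^2)$ error terms only after the $\theta$ bound gives $\lambda_1\lambda_n^{n-1}\le\Lambda_0$. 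That bound upgrades the good term to $\kappa S^{2+\frac{1}{n-1}}$.

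The genuine gap is in the convergence part, at two points.

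First, $\int_L\operatorname{Re}\Omega$ is not monotone but constant, because $\operatorname{Re}\Omega=\tfrac12(\rho\,dx+\bar\rho\,d\bar x)$ is closed and $L$ is closed. The monotone quantity is the conformal volume. The real part of the computation in Claim~\ref{thetalution} gives $\frac{d}{dt}\int_L e^{n\psi}\,d\mathrm{Vol}_L=\int_L|\nabla\theta|^2e^{n\psi}\,d\mathrm{Vol}_L$. Since $\operatorname{Re}\Omega|_L=\cosh\theta\,e^{n\psi}\,d\mathrm{Vol}_L$, the conformal volume is bounded above by the invariant $\int_L\operatorname{Re}\Omega$. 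With this fix you get $\int_0^\infty\!\int_L|\nabla\theta|^2e^{n\psi}\,d\mathrm{Vol}_L\,dt<\infty$ and \emph{subsequential} convergence to a limit with $\nabla\theta=0$. You do not get uniqueness of the limit: convergence of the flow itself needs the speed $|\vec H_G|=|\nabla\theta|$ to be integrable in time.

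Second, the source of a rate you propose does not exist. The angle satisfies exactly $\partial_t\theta=\Delta_g\theta+n\,g(\nabla\psi,\nabla\theta)$. This equation has no zeroth-order term and no ambient curvature in it, and constants solve it. So positive cross-curvature cannot produce a linear damping of $\theta-\bar\theta$; its only role is in the $C^2$ estimate. The decay has to come from a Harnack inequality (or a spectral gap) for this drift heat equation on $L(t)$, which uses only the uniform geometric bounds.

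The paper does this following \cite{AK}:
\begin{itemize}
\item the uniform bounds on third derivatives of $T$ give a lower bound on the weighted Ricci curvature associated with $\Delta_{-n\psi}$;
\item a Li--Yau estimate for $\log(\theta+c)$ then gives $\theta(x,t)\le C\,\theta(y,t+\tfrac12)$ for positive solutions;
\item the standard argument of \cite{KSW}, applied to $\theta-\min\theta$ and $\max\theta-\theta$, gives geometric decay of $\operatorname{osc}\theta$.
\end{itemize}
Combined with the uniform $C^k$ bounds, this yields exponential decay of $\nabla\theta$, hence of the speed, and convergence of the whole flow to a stationary, calibrated limit.
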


For the precise definition of the cut locus, we refer the reader to Definition \ref{def:cut}. Here, the term “calibrated” is taken with respect to the metric introduced by Kim-McCann-Warren \cite{KMW}.

\begin{remark}
  The convergence of Lagrangian mean curvature flow in the Calabi-Yau setting has long been studied following the Thomas-Yau conjecture \cite{thomas2001special}; they conjectured that the Lagrangian mean curvature flow in a Calabi-Yau manifold should exist for all time and converge to a special Lagrangian, provided the initial Lagrangian is stable in an appropriate sense (see Joyce \cite{joyce2015conjectures} for subsequent refinements of this conjecture).  Here, we show that the cross-curvature condition provides a framework in which an analog of the Thomas-Yau conjecture holds.
\end{remark}

Our proof of Theorem \ref{thm:main1} uses an energetic method and is inspired in part by a strategy first mapped out in Smoczyk \cite{S96} (see also Behrndt \cite{B} and Wood \cite{Wood}). Necessarily, we use para-complex numbers and para-K\"ahler geometry.  

The proof of Theorem \ref{thm:main2} relies on a geometric maximum principle for a Riemannian metric on the product space along the graph, building on the work of Brendle-Leg\'er-McCann-Rankin \cite{BLMR} while extending their method to the parabolic setting. To obtain exponential convergence, we apply a Li-Yau Harnack inequality, in the spirit of Kim-McCann-Warren \cite{KMW} and Abedin–Kitagawa \cite{AK}.

Rather than aiming to construct a comprehensive dictionary linking para-complex geometry and optimal transport, our goal is more focused: To demonstrate that key results in optimal transport theory admit self-contained proofs within the framework of generalized Lagrangian mean curvature flow.  We direct the reader to Cruceanu-Fortuny-Gadea \cite{Para} for a history of para-complex numbers and para-complex geometry.  
\subsection{Relation to the Sinkhorn Algorithm}

Recent work by Deb-Kim-Pal-Schiebinger \cite{Paletal} has identified a continuous-time limit of Sinkhorn’s algorithm for entropy-regularized optimal transport as the regularization parameter tends to zero, yielding a Wasserstein mirror gradient flow for the transport cost on the space of probability measures. For smooth optimal transport plans induced by a potential, that admits sufficient regularity, this measure-valued flow can be formally rewritten in new coordinates, leading to a parabolic Monge-Ampère type flow; Berman \cite{berman2020sinkhorn} shows that Sinkhorn iterations, in a joint limit (mesh size $\rightarrow 0, \varepsilon\rightarrow 0$, iterations ~$\varepsilon^{-1}$), converge to a parabolic Monge-Ampère equation for the optimal transport potential.

From this perspective, the parabolic Monge-Ampère flow appearing in optimal transport may be viewed as a refined, geometric realization of the Sinkhorn limit, obtained after restricting to graphical Lagrangian submanifolds and passing from Wasserstein space to the Kim-McCann geometric framework. Our work operates directly at this geometric level: We interpret the parabolic Monge-Ampère equation as a generalized Lagrangian mean curvature flow on Kim-McCann manifolds, establish preservation of the Lagrangian condition, and prove long-time existence and convergence under the positive cross-curvature condition.

\subsection{Organization}

In Section 2, we develop the appropriate notions and constructions adapted from almost Calabi-Yau theory. We introduce the Lagrangian angle and the generalized mean curvature flow, and explain their relationship. In Section 3, we prove that the Lagrangian condition is preserved along the flow by showing that the integral of the Kähler form restricted to 
$L$ vanishes identically, using a Gronwall-type argument. In Section 4, we incorporate the MTW condition and present a parabolic analogue of certain results from \cite{BLMR}. Finally, in Section 5, we establish higher regularity and exponential convergence.
\bigskip

\noindent\textbf{Acknowledgments.} AB acknowledges
the support of NSF grant DMS-2350290, the Simons Foundation grant MPS-TSM-00002933, and a Bill-Guthridge fellowship from UNC-Chapel Hill. DW acknowledges
the support of the NSF RTG Grant DMS-2135998.

\section{Geometric Background}


\subsection{Kim-McCann Manifold}

\medskip

We define a Kim-McCann manifold by slightly generalizing the setting of \cite{KM}. 

\begin{definition}\label{def:km}\label{def:km}
A  \textit{Kim-McCann manifold} is a product manifold $M^n\times \bar{M}^n$ equipped with a metric $h$ satisfying the following property: for every point $(x,\bar{x}) \in M^n\times \bar{M}^n\setminus \mathscr{C}$, where $\mathscr{C}$ denotes the cut locus (defined below), there exists a function $c$ locally defined near $(x,\bar{x})$ such that for all $V \in T_{x}M^n$ and $\bar{W} \in T_{\bar{x}}\bar{M}^n$, 
\[
h(V,\bar{W}) := -\tfrac{1}{2}\,V\bar{W}c,
\]
and moreover, $h$ vanishes on pairs of vectors tangent to the same factor.
\end{definition}

\begin{definition}\label{def:cut}
We define the set $\mathscr{C}$ where the metric is not well-defined to be the \textit{cut-locus}. The term derives from the set of points on a Riemannian manifold at which the squared distance function encounters differentiability issues.
\end{definition}

We will assume the function $c$ is at least $C^4$ away from the set $\mathscr{C}$. 
 The metric $h$ can be written in the following form:
\[
h:=\frac{1}{2}\left(
\begin{array}
[c]{cc}%
0 & -c_{i\bar{s}}\\
-c_{\bar{s}i} & 0
\end{array}
\right)  .
\]



\begin{remark} 
By permitting the metric-defining function to be locally defined, our framework extends naturally to cases like the flat torus. For instance, the Kim–McCann metric derived from the squared distance remains smooth, even though the underlying cost function has a singularity; singularities that our definition is designed to bypass.
A rationally-sloped geodesic in a torus, in this case, need not encounter the cut-locus.   
\end{remark}
 Note that the form
 \[
\omega = -\frac{1}{2} c_{i\bar{s}}\, dx^{i} \wedge d\bar{x}^{s}
\]
is a symplectic form. To interpret the symplectic form as a para-K\"{a}hler form, consider the map
\[
K:T_{\left(  p,\bar{p}\right)  }M\times\bar{M}\rightarrow T_{\left(  p,\bar
{p}\right)  }M\times\bar{M}%
\]
which is represented as
\begin{equation}\label{defineK}
K_{p,\bar{p}}=I_{T_{p}M}\oplus\left(  -I\right)  _{T_{\bar{p}}\bar{M}}.
\end{equation}
For a basic introduction to para-K\"{a}hler geometry, we refer the reader to \cite{Para}. \ The
important fact is that the complex structure map $J$ satisfying $J^{2}=-I$ is
replaced by a para-complex structure map, $K$, satisfying $K^{2}=I$. \ Then one has
\[
\omega(\cdot,\cdot)=h(K\cdot,\cdot).
\]
It was noted in \cite{KM} that graphs $\left(  x,T(x)\right)  $ of optimal
transport maps are Lagrangian with respect to this symplectic form, and calibrated with respect to a conformal metric depending on the mass densities \cite{KMW}. In \cite[Claim 2.2]{W2025}, it was observed that  $K$ is parallel with respect to the
Kim-McCann metric.

\subsection{Para-complex Geometry} A para-complex valued form $\eta$ is of $\left(  n,0\right)  $ type if
\[
\mathbf{k}\cdot\eta(V_{1},..,V_{n})=\eta(KV_{1},..,V_{n})=\eta(V_{1}%
,KV_{2},..,V_{n}), \mbox{  etc.}
\]
Here $\mathbf{k}$ is an algebraic object akin to the more familiar
$\mathbf{i}$, but instead satisfies $\mathbf{k}^{2}=1.$ Define the numbers $\tau$ and $\bar{\tau}$ by
\begin{equation}\label{definetau}
    \tau = \frac{1}{2}(1+\textrm{\bf k}) \qquad \textrm{and} \qquad \overline{\tau}=\frac{1}{2}(1-\textrm{\bf k})\,,
\end{equation}
which satisfy
\begin{equation} \label{taurelations}
    \tau^2=\tau\,, \quad \overline{\tau}^{\,2}=\overline{\tau}\,, \quad \textrm{and} \quad \tau\overline{\tau}=0\,.
\end{equation}
We note that two standard notational conventions collide here. In the optimal transport literature, the bar is commonly used to denote the target manifold, while in the complex and para-complex setting, the overline denotes conjugation. The equations below necessarily employ both conventions, although the distinction should be clear from the context.

We define  
\[
\frac{\partial}{\partial{z^i}}=\tau\frac{\partial}{\partial{x^i}} +\overline{\tau}\frac{\partial}{\partial{\bar{x^i}}}%
\] and 
\[
d{z^i}=\tau d{x^i}+\overline{\tau} d{\bar{x^i}}.
\]
Noting that 
\[
d{z^1}\wedge ...\wedge d{z^n}=\tau d{x^1}\wedge ...\wedge d{x^n}+\overline{\tau} d\bar{x^1}\wedge ...\wedge d\bar{x^n} %
\]
and using the relations in (\ref{taurelations}), one can check that any para-holomorphic $(n,0)$-form can be written as 
\[  \eta= f(z) d{z}
\] (using $dz=dz^{1}\wedge...dz^{n},$ etc.) for $f$ satisfying 
 \[ 
\frac{\partial}{\partial\overline{z^i}} f = 0.
 \] 
 Writing 
 \[  
 f(x) = \tau u(x, \bar{x}) + \bar{\tau} v(x, \bar{x}) 
 \] the Cauchy-Riemann equations become
\begin{align*}
\partial_{y}u  & =0\\
\partial_{x}v  & =0.
\end{align*}
Thus 
for real valued $u$ and $v$  
\begin{equation} \label{defom}
\Omega=\tau \, u(x) dx+\overline{\tau}\,v(\bar{x})d\bar{x}%
\end{equation}
is a para-holomorphic $\left(  n,0\right)$-form. 

\subsection{Almost Calabi-Yau Manifold}

Our first goal is to demonstrate that there is indeed a strong analogy between almost Calabi-Yau manifolds and Kim-McCann manifolds.  

As motivation, we now briefly recall the classical complex setting, for example, see the work of Joyce \cite[Section 8.4]{Joyce}. Given $n\geq2$, an \textit{almost Calabi-Yau} $n$-fold is a
quadruple $(X,J,g,\Omega)$ such that $(X,J,g)$ is a compact $n$
(complex)-dimensional K\"{a}hler manifold, and $\Omega$ is a non-vanishing
holomorphic $(n,0)$-form on $X.$
In this setting, one defines a function $\psi$ (see \cite[(15)]{behrndt2008}, cf. \cite[8.24]{Joyce}) such that
\begin{equation}
e^{2n\psi}\frac{\omega^{n}}{n!}=(-1)^{n(n-1)/2}\left(  \frac{i}{2}\right)
^{n}\Omega\wedge\overline{\Omega}. \label{CY}%
\end{equation}

We would like to explore the analogy in the para-complex setting.
Recalling (\ref{defom}) we see \begin{align*}
\Omega\wedge\overline{\Omega}^{_\textrm{\hspace{.03cm}\bf k}} &  =\Big( 
\tau\,u(x) dx +
\overline{\tau}\,v(\bar{x})\,d\bar{x}%
\Big)  \wedge \Big(
\overline{\tau}\,u(x)  dx +
\tau\,v(\bar{x})\,d\bar{x}%
\Big)  \\
&  =\tau^{2}\,uv\,dx\wedge d\bar{x}+\overline{\tau}^{\,2}\,uv\,d\bar{x}\wedge dx\\
&  =\left(\tau^{2}+\left(  -1\right)^{n^{2}}\,\overline{\tau}^{\,2}\right)\,uv\,dx\wedge d\bar{x}\\
& =uv\,\mathbf{k}^{n}\,dxd\bar{x}.
\end{align*}
In particular, if 
\begin{align}
    \Omega
        \;&=\;\frac{1}{2}\big( \rho(x)dx+\bar{\rho}(\bar{x})d\bar{x}\big)
        +\frac{1}{2}\mathbf{k}\big(  \rho(x)dx-\bar{\rho}(\bar{x})d\bar{x}\big) \label{def Omega} \\
        \;&=\; \tau\rho(x)dx+\overline{\tau} \bar{\rho}(\bar{x})d\bar{x} \label{def Omega w tau}
\end{align}
one has
\begin{equation}
\Omega\wedge\overline{\Omega}^{_\textrm{\hspace{.03cm}\bf k}}=\rho(x)\bar{\rho}(\bar{x})\mathbf{k}^{n}dxd\bar
{x}.\label{omega wedge itself}%
\end{equation}

With this setup, consider a function $\psi$ that satisfies an equation similar to (\ref{CY}), replacing $i$ with $\bf{k}$
\begin{equation}
e^{2n\psi}\frac{\omega^{n}}{n!}=(-1)^{n\left(  n-1\right)  /2}\left(
\frac{\mathbf{k}}{2}\right)  ^{n}\Omega\wedge\overline{\Omega}^{_\textrm{\hspace{.03cm}\bf k}}.\label{def:psi_imp}%
\end{equation}
Now as
\[
\omega=\frac{1}{2}\sum_{i,\bar{s}}\left(  -c_{i\bar{s}}\right)  dx^{i}\wedge
d\bar{x}^{s}%
\]
one can check that
\[
\omega^{n}=\left(  \frac{1}{2}\right)  ^{n}n!(-1)^{n(n-1)/2}\det\left(
-c_{i\bar{s}}\right)  dxd\bar{x},
\]
so to satisfy our version of equation (\ref{CY}), we require
\[
e^{2n\psi}=\frac{\rho\bar{\rho}}{\det\left(  -c_{i\bar{s}}\right)  }, 
\] 
that is
\begin{equation}
\psi=\frac{1}{2n}\left(  \ln\rho+\ln\bar{\rho}-\ln\det\left(  -c_{i\bar{s}}\right)  \right).
\label{def:psi}
\end{equation}

Recall  (cf. \cite[pg 166]{Joyce}) that defining 
\[
\tilde{g} = e^{2\psi} g
\]
the (real) $n$-form $\operatorname{Re}(\Omega)$ becomes a calibration that detects special Lagrangian submanifolds in the conformal metric.  

On our side of the analogy, this would suggest that (recalling (\ref{def Omega}))
\[
\operatorname{Re}(\Omega) =\frac{1}{2}\left(  \rho(x)dx+\bar{\rho}(\bar{x})d\bar{x}\right)
\]  
would be a calibration for the metric defined by 
\begin{equation}
\tilde{h}:=\frac{1}{2}\left(  \frac{\rho(x)\bar{\rho}(\bar{x})}{\det\left(
-c_{i\bar{s}}\right)  }\right)  ^{1/n}\left(
\begin{array}
[c]{cc}%
0 & -c_{i\bar{s}}\\
-c_{\bar{s}i} & 0
\end{array}
\right)  . \label{metric}%
\end{equation}

But this is precisely the conclusion of \cite{KMW}, namely that the graph of the optimal transport map pairing $\rho$ to
$\bar{\rho}$ with cost $c$, is a volume maximizing submanifold of $M\times
\bar{M}$ with the above metric and calibration.

To continue exploring the analogy, we define a \textit{Generalized Kim-McCann manifold}.

\begin{definition}
 Let $n\geq2$. A \textit{Generalized Kim-McCann (GKM)} $n$-fold is a
quadruple $(M \times \bar{M},K,h,\Omega)$ such that $(M \times \bar{M},K,h)$ is a compact $n+n$
dimensional Kim-McCann manifold, and $\Omega$ is a non-vanishing
para-holomorphic $(n,0)$-form on $M \times \bar{M}.$
\end{definition}

Given a GKM manifold, let $\psi$ be defined implicitly by (\ref{def:psi_imp}), and let $L$ be a Lagrangian submanifold. The generalized mean curvature is then defined as
\[
\vec{H}_G := \vec{H} - n\, (\hat\nabla \psi)^\perp
\]
where $\vec{H}$ is the standard mean curvature. 
This coincides with the definition used in the almost Calabi–Yau setting. We will later show that
\[
\mathcal{L}_{\vec{H} - n\, (\hat{\nabla} \psi)^\perp} \omega = 0
\]
implying that the flow generated by this vector field preserves the Lagrangian condition. A rigorous proof of this claim will be given using the maximum principle.

We also remark that the mean curvature of $L$ with respect to the conformal metric (\ref{metric}) is
\[
e^{-2\psi} \vec{H}_G.
\]

\subsection{Lagrangian Submanifolds of Almost Calabi-Yau Manifolds}

We consider the following immersions 
$$ F : L^n \to M^n\times \bar{M}^n.$$ For an arbitrary chart on $L$ we may take $\partial_iF$ to be a basis for the tangent space in  $M \times \bar{M}$. Before proceeding, we make the following observation.

\begin{lemma} \label{graphical}
Any spacelike submanifold is locally a graphical submanifold over $M.$ 
\end{lemma}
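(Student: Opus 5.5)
The plan is to show that the projection $\pi:M\times\bar M\to M$, restricted to a spacelike submanifold $F:L^n\to M\times\bar M$, has injective differential. The inverse function theorem then makes $\pi\circ F$ a local diffeomorphism onto an open set $U\subset M$. Locally $F(L)$ is therefore $\{(x,T(x)) : x\in U\}$ with $T=\bar\pi\circ F\circ(\pi\circ F)^{-1}$, which is exactly the graph of a map. Here \emph{spacelike} means that $h$ restricted to $T_pL$ is positive definite. In the rest of the paper $L$ is $n$-dimensional, which is what makes the differential square.

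\textbf{Key step (injectivity).} Take a tangent vector $V\in T_{F(p)}F(L)$ with $d\pi(V)=0$. Then $V$ has no component along $TM$, so $V=\bar W\in T\bar M$. By Definition \ref{def:km}, $h$ vanishes on pairs of vectors tangent to the same factor, so
\[
h(V,V)=h(\bar W,\bar W)=0 .
\]
Since $h$ is positive definite on $T_pL$, this forces $V=0$. So the kernel of $d\pi|_{T_pL}$ is trivial. In block form, $h$ is off-diagonal, so the whole factor $T\bar M$ (and likewise $TM$) is null.

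\textbf{Finishing.} Both $T_pL$ and $T_{\pi(F(p))}M$ have dimension $n$, so $d(\pi\circ F)_p$ is an isomorphism. The inverse function theorem gives a neighborhood of $p$ on which $\pi\circ F$ is a diffeomorphism onto an open set $U\subset M$. Composing its inverse with $\bar\pi\circ F$ produces a map $T:U\to\bar M$ whose graph is the image of that neighborhood. The same argument with the roles of $M$ and $\bar M$ exchanged shows that the submanifold is also locally graphical over $\bar M$, so $T$ is a local diffeomorphism.

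The main obstacle is only a point of interpretation: one should check that "spacelike" here means positive definite, not merely nondegenerate. Nondegeneracy alone would not exclude a null vector tangent to $\bar M$. Given the positive-definite convention, the proof is just the observation that the two factors are null for $h$.
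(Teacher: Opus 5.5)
Your proposal is correct and is essentially the paper's argument. The paper shows that the $M$-components $\xi_i$ of an orthonormal tangent frame are linearly independent, because any dependency would give a nonzero tangent vector of the form $(0,\bar W)$, which is null for $h$, contradicting positive definiteness; that is exactly your trivial-kernel argument for $d\pi|_{T_pL}$. The paper then cites a standard transversality-implies-graph result (Lee, Corollary 6.33), where you apply the inverse function theorem directly. Your extra observation, that the same reasoning gives local graphicality over $\bar M$ and hence makes $T$ a local diffeomorphism, is also correct, though the lemma does not require it.
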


\begin{proof}
Since the induced metric is Riemannian, we are free to choose an orthonormal chart
for $T_{(x,\bar{x})}$ at any point $(x,\bar{x})\in L$. Denote the basis at $(x,\bar{x})$ by
\[
\partial_{i}F=\left(  \xi_{i},\zeta_{i}\right)  \in T_{x}M\times T_{\bar{x}}\bar{M}.
\]
We claim that $\left\{  \xi_{i}\right\}  $ are independent: \ If not, then
we have for some nontrivial $a_{j}$
\[
\xi_{i}=\sum_{j\neq i}a_{j}\xi_{j}%
\]
in which case, we have the vector equality:%
\[
\partial_{i}F-\sum_{j\neq i}a_{j}\partial_{j}F=\left(  0,\zeta_{i}-\sum_{j\neq i}a_{j}%
\zeta_{j}\right)  .
\]
Observe that the left-hand side has length
\[
\left\Vert \partial_{i}F-\sum_{j\neq i}a_{j}\partial_{j}F\right\Vert ^{2}=1+\sum_{j\neq
i}a_{j}^{2}%
\]
whereas
\[
\left\Vert \left(  0,\zeta_{i}-\sum_{j\neq i}a_{j}\zeta_{j}\right)
\right\Vert ^{2}=0
\] a contradiction. 
It follows that $\left\{  \xi_{i}\right\}  $ do form a basis for $T_{x}M.$ The tangent space to $L$ is then transverse to $ \{\bar{x}\} \times \bar{M}$ and it follows (see \cite[Corollary 6.33]{Lee}) that L is locally a graph over $M$. 

\end{proof}

It is worth noting that if the graph $(x,T(x))$ is Lagrangian, it will be
locally described via a cost-exponential: Consider the one-form
\[
\eta= -c_{i}(x,T(x))dx^{i}.
\]
One can check that $\eta$ is closed when the graph is K\"ahler. Thus, locally, we get
\begin{equation}\label{def:cexp}
Du + D_{x}c(x, T(x)) = 0
\end{equation}
for some (local) potential $u$.  In fact, on any simply connected neighborhood on which the Lagrangian submanifold is represented as a graph, the function $u$ will exist.   
Note that a differentiable Kantorovich potential, if it exists, will satisfy (\ref{def:cexp}). However, we do not assume a priori that such a potential is available.

Thus, it is possible to invoke gradient-type graphs locally whenever a disk-shaped region projects down to a simply connected neighborhood in $M.$   
This can be accomplished by fixing a metric $m$ on $M$ and working locally within balls whose diameters are smaller than the injectivity radius with respect to $m$.

\subsection{The Lagrangian Angle}
Building on the framework introduced in \cite{behrndt2008}, we define the Lagrangian angle $\theta$ through the relation
\begin{equation}\label{theta}
    \Omega\big|_L = e^{{\bf k}\theta+n\psi}d\textrm{Vol}_L, 
\end{equation}
where
\begin{align*}
    e^{{\bf k}\theta} \;=\; \sum_{l=0}^\infty \frac{\theta^{2l}}{(2l)!} + {\bf k}\sum_{m=0}^\infty \frac{\theta^{2m+1}}{(2m+1)!} \\\;=\; \cosh\theta+{\bf k}\sinh\theta\,
    \\\;=\;  e^{\theta}\tau+e^{-\theta}%
\bar{\tau}.
\end{align*}

Note that, unlike in the Calabi–Yau setting, the angle is defined as a real number.

\begin{proposition}\label{theta is defined}
    If $L$ is a Lagrangian spacelike submanifold of a GKM, then the Lagrangian angle is well-defined.
\end{proposition}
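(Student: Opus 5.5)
The plan is to reduce the statement to an algebraic fact about para-complex numbers. Every para-complex number can be written uniquely as $\tau a+\bar\tau b$ with $a,b$ real. By (\ref{taurelations}) and the identity $e^{\mathbf{k}\theta}=e^{\theta}\tau+e^{-\theta}\bar\tau$, we have
\[
e^{\mathbf{k}\theta+n\psi}=e^{n\psi+\theta}\,\tau+e^{n\psi-\theta}\,\bar\tau .
\]
So a para-complex number is of the form $e^{\mathbf{k}\theta+n\psi}$ with $\theta$ real if and only if $a>0$, $b>0$ and $ab=e^{2n\psi}$. In that case $\theta=\tfrac12\ln(a/b)$ is determined uniquely. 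It therefore suffices to write $\Omega|_L=(\tau a+\bar\tau b)\,d\mathrm{Vol}_L$ and to check these two conditions on $a$ and $b$ pointwise.

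\textbf{Step 1 (local graph and orientation).} By Lemma \ref{graphical}, near any point $L$ is a graph $x\mapsto(x,T(x))$ over $M$. In these coordinates,
\[
\Omega|_L=\tau\,\rho(x)\,dx+\bar\tau\,\bar\rho(T(x))\,\det(DT)\,dx .
\]
The tangent projection to $M$ is injective everywhere, so $dx|_L$ never vanishes, and we orient $L$ by pulling back the orientation of $M$; this choice is consistent from chart to chart. Differentiating $h(V,\bar W)=-\tfrac12 V\bar W c$ along the graph gives the induced metric
\[
g_{ij}=-c_{i\bar s}\,T^{s}_{j},
\]
up to the fixed normalising constant in $h$. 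This matrix is symmetric because $L$ is Lagrangian and positive definite because $L$ is spacelike. Hence
\[
d\mathrm{Vol}_L=\sqrt{\det g}\,dx,\qquad \det g=\det(-c_{i\bar s})\,\det(DT)>0 .
\]

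\textbf{Step 2 (identify $a,b$ and check the product).} Comparing coefficients gives
\[
a=\frac{\rho}{\sqrt{\det g}},\qquad b=\frac{\bar\rho\,\det DT}{\sqrt{\det g}} .
\]
Then $ab=\rho\bar\rho\,\det DT/\bigl(\det(-c_{i\bar s})\det DT\bigr)=\rho\bar\rho/\det(-c_{i\bar s})$, which equals $e^{2n\psi}$ by (\ref{def:psi}). This also shows $ab>0$ automatically, since $e^{2n\psi}>0$ (equivalently, $\rho\bar\rho/\det(-c_{i\bar s})>0$ is exactly what is needed for $\psi$ in (\ref{def:psi}) to be defined). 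So $a$ and $b$ have the same sign.

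\textbf{Step 3 (positivity).} It remains to show $a>0$ and $b>0$, and this is the main obstacle. With our orientation of $L$, $a$ has the sign of $\rho$, and $b$ then has the same sign by Step 2. I would use the convention that the densities are positive, as the paper's optimal-transport normalisation suggests. If instead $\Omega$ is only assumed non-vanishing, $\rho$ has constant sign on each component of $M$ and we may replace $\Omega$ by $-\Omega$. We then define $\theta=\tfrac12\ln(a/b)$.

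\textbf{Step 4 (independence of choices).} This formula is intrinsic: it does not depend on the coordinates on $M$, since $a$ and $b$ are coefficients relative to $d\mathrm{Vol}_L$. It is also independent of the local graph chart, because $\Omega|_L$ and $d\mathrm{Vol}_L$ are global objects and the chosen orientation is global. Hence $\theta$ is a well-defined smooth real function on $L$.
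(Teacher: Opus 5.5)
Your proof is correct and follows essentially the same route as the paper: a local graph frame, the identity $\det g=\det(-c_{i\bar s})\det DT$ for the Lagrangian spacelike graph, and the observation that $ab=e^{2n\psi}$ (the paper's equality of para-norms) together with positivity of $a$ and $b$, which yields a unique real $\theta$. You are more explicit than the paper about the orientation of $L$, the sign conditions and chart-independence, which the paper leaves implicit. One correction to Step 1: the constant in $g_{ij}=-c_{i\bar s}T^s_j$ is exactly $1$, not merely "up to a normalising constant", and your identity $ab=e^{2n\psi}$ depends on this.
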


\begin{proof}

Take a basis of an immersion $F$ that is locally defined as a graph, that is $F_{i}=\partial _{i}F=\left( E_{i},T_{i}^{s}E_{\bar{s}}\right),$  
where $E_i$  form a coordinate basis for $T_p M$ and  $ E_{\bar{s}}$ form a coordinate basis for $T_{\bar{T(p)}}\bar{M}$ for some function $T: M \to \bar{M}$.

Then recalling 
\[
\Omega =\tau \rho (x)dx+\bar{\tau}\bar{\rho}(\bar{x})d\bar{x}
\]%
we have  
\begin{eqnarray*}
\Omega _{|_{L}}\left( F_{1},...,F_{n}\right)  &=&\tau \rho (x)dx\left(
E_{1},E_{2},..,E_{n}\right) +\bar{\tau}\bar{\rho}(\bar{x})d\bar{x}%
(T_{1}^{\bar{s}}E_{\bar{s}},T_{2}^{\bar{s}}E_{\bar{s}},...) \\
&=&\tau \rho (x)+\bar{\tau}\bar{\rho}(\bar{x})\det DT.
\end{eqnarray*}%
On the other hand 
\[
dVol_{L}(F_{1},...F_{n})=\sqrt{g_{ij}}
\]%
for 
\begin{eqnarray*}
g_{ij} &=&F_{i}\cdot F_{j}=\left( E_{i},T_{i}^{s}E_{\bar{s}}\right) \cdot
\left( E_{j},T_{j}^{s}E_{\bar{s}}\right)  \\
&=&\frac{1}{2}\left( -c_{i\bar{s}}T_{j}^{s}-c_{j\bar{s}}T_{i}^{\bar{s}%
}\right) . \\
&=&(\frac{W+W^{T}}{2})_{ij}
\end{eqnarray*}%
where
\[
W_{ij}=-c_{i\bar{s}}T_{j}^{s}.
\]%
Recall that $W$ is symmetric when $L$ is Lagrangian and positive definite when $L$ is spacelike.  Thus

\[
dVol_{L}(F_{1},...F_{n})=\sqrt{\det W}=\sqrt{\det \left( -c_{i\bar{s}}\right) \det
DT}
.\]%
By (\ref{def:psi})
\[
e^{n\psi }=\sqrt{\frac{\rho (x)\bar{\rho}\left( \bar{x}\right) }{\det (-c_{i%
\bar{s}})}}
\]%
so

\begin{eqnarray*}
e^{n\psi }dVol_{L}(F_{1},...F_{n}) &=&\sqrt{\frac{\rho (x)\bar{\rho}\left( 
\bar{x}\right) }{\det (-c_{i\bar{s}})}}\sqrt{\det \left( -c_{i\bar{s}%
}\right) \det DT} \\
&=&\sqrt{\rho (x)\bar{\rho}\left( \bar{x}\right) }\sqrt{\det DT}.
\end{eqnarray*}%

Next notice that
\[
\Omega _{|_{L}}\left( F_{1},...,F_{n}\right) =\tau \rho (x)+\bar{\tau}\bar{%
\rho}(\bar{x})\det DT
\]%
is a
para-complex number, with norm equal to \[
\begin{aligned}
\left\Vert \Omega _{|_{L}}(F_{1},\ldots,F_{n}) \right\Vert_{\text{para}}^{2}
&= \left( \tau \rho(x) + \bar{\tau}\,\bar{\rho}(\bar{x}) \det DT \right)
   \left( \bar{\tau}\rho(x) + \tau\,\bar{\rho}(\bar{x}) \det DT \right) \\
&= \rho(x)\,\bar{\rho}(\bar{x}) \det DT
\end{aligned}
\]
whereas 
\begin{eqnarray*}
\left\Vert e^{n\psi }dVol_{L}(F_{1},...F_{n})\right\Vert _{\text{para}}^{2}
&=& | \sqrt{\rho (x)\bar{\rho}\left( \bar{x}\right) }\sqrt{\det DT} | ^2  \\
&=&\left\Vert \Omega _{|_{L}}\left( F_{1},...,F_{n}\right) \right\Vert _{%
\text{para}}^{2}.
\end{eqnarray*}

The following can be easily shown for para-complex numbers: Suppose that $a,b,\alpha,\beta >0$ and  
\[
\left\Vert a\tau +b\bar{\tau}\right\Vert _{\text{para}}^{2}=\left\Vert
\alpha \tau +\beta \bar{\tau}\right\Vert _{\text{para}}^{2}\neq 0
\]%
then there exists a unique unit para-complex number $e^{\mathbf{k}\theta }$ such
that 
\[
a\tau +b\bar{\tau}=e^{\mathbf{k}\theta }\left( \alpha \tau +\beta \bar{\tau}%
\right). 
\]%

Thus, we can choose a unique $\theta$ such that 
\begin{equation*}
    e^{{-\bf k}\theta}\Omega\big|_L = e^{n\psi}d\textrm{Vol}_L.
\end{equation*}  
\end{proof}  

\subsubsection{Expressing $\theta$ in terms of the graphing Jacobian}
Observe that 
\begin{align*}
e^{-\mathbf{k}\theta}\Omega  & =\left(  e^{-\theta}\tau+e^{\theta}%
\bar{\tau}\right)  \left(  \rho\tau dx+\bar{\rho}\bar{\tau}d\bar{x}\right)
\\
& =e^{-\theta}\rho\tau dx+e^{\theta}\bar{\rho}\bar{\tau}d\bar{x}%
\\  = & \frac{1}{2}\left(  e^{-\theta}\rho
dx+e^{\theta}\bar{\rho}d\bar{x}\right)
+\frac{1}{2}\left(  e^{-\theta}\rho dx-e^{\theta}\bar{\rho}d\bar{x}\right)
\mathbf{k}%
\end{align*}
using (\ref{taurelations}). 
So
\begin{align*}
e^{-\mathbf{k}\theta}\Omega_{0}  & =\frac{1}{2}\left(  e^{-\theta}\rho
dx+e^{\theta}\bar{\rho}d\bar{x}\right)  \\
& +\frac{1}{2}\left(  e^{-\theta}\rho dx-e^{\theta}\bar{\rho}d\bar{x}\right)
\mathbf{k}.%
\end{align*}  From (\ref{theta}) the above expression should be real.  The imaginary part vanishes when

\begin{equation}
e^{-\theta}\rho=e^{\theta}\bar{\rho}\frac{d\bar{x}}{dx}  \label{theta defined here}
\end{equation}
where the ratio of the $n$-forms makes sense on the spacelike $n$ submanifold. This leads to 

\[
\theta=\frac{1}{2}\ln\left(  \frac{\rho dx}{\bar{\rho}d \bar{x} }\right)=\frac{1}{2}\ln\left(  \frac{\rho}{\bar{\rho}\det DT}\right)
\] when the manifold is described as the graph of a function $T.$

\begin{remark}
    In the optimal transport case, one assumes that $\rho$ and $\bar{\rho}$ are probability densities.  For a GKM, these densities need not be probabilities, nor need they give the same mass as each other.
\end{remark}
\begin{example}
    The triple-valued map
\[
z\rightarrow z^{4/3}%
\]
on the unit circle. \ This does not locally preserve the unit arclength $ds,$
but globally pushes forward $ds$ to $ds.$ This is a special Lagrangian
submanifold of $S^1 \times S^1$ with KM metric given locally by $c(x,\bar{x}) = \frac{1}{2}d^2(x,\bar{x})$ with angle $\log(3/4)\ $.  If we changed the underlying densities to $4ds$ and $3d\bar{s}$ (giving a different GKM), the Lagrangian angle would become $0$. 
\end{example}

Given the K\"ahler form $\omega$, for any normal vector field $V$ we can define a $1$-form on $L$ via 
\[
\alpha_V(\cdot)=\omega(V, \cdot).
\]

We now relate $\theta$ to the generalized mean curvature form.

\begin{proposition}
    Let $L$ be a spacelike Lagrangian submanifold of a GKM $n$-fold, and define $\theta$ as in \eqref{theta} for $\Omega$ as in \eqref{def Omega}. Then, the identity 
    \begin{equation}\label{212}
        d\theta=\alpha_{\vec{H}-n(\hat \nabla\psi)^\perp}
    \end{equation}
    holds. In particular, the generalized mean curvature form $\theta$ is exact on $L$ and 
    \begin{equation}
        \nabla \theta=K \left({\vec{H}-n(\hat \nabla\psi)^\perp}\right).
    \end{equation}

\end{proposition}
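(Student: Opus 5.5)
The plan is to differentiate the defining relation \eqref{theta}, $\Omega|_L=e^{{\bf k}\theta+n\psi}\,d\textrm{Vol}_L$, along $L$ and compare the ${\bf k}$-parts of both sides. Fix $p\in L$ and a local orthonormal frame $e_1,\dots,e_n$ of $TL$ that is geodesic at $p$ for the induced connection, so $(\nabla_X e_i)^\top=0$ at $p$. Then $d\textrm{Vol}_L(e_1,\dots,e_n)=1$ near $p$, and $\Omega(e_1,\dots,e_n)=e^{{\bf k}\theta+n\psi}$. Differentiating in a tangent direction $X$ gives
\[
({\bf k}\,d\theta+n\,d\psi)(X)\,e^{{\bf k}\theta+n\psi}=(\hat\nabla_X\Omega)(e_1,\dots,e_n)+\sum_i\Omega(e_1,\dots,\mathrm{II}(X,e_i),\dots,e_n).
\]

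\textbf{Step 1 (covariant derivative of $\Omega$).} Since $K$ is parallel for the Kim-McCann metric \cite[Claim 2.2]{W2025}, $\hat\nabla_X\Omega$ is again of $(n,0)$ type. Hence $\hat\nabla\Omega=\beta\otimes\Omega$ for a para-complex $1$-form $\beta$. To identify $\beta$, I would compute directly with $\Omega=\tau\rho\,dx+\bar\tau\bar\rho\,d\bar x$. The Christoffel symbols of $h$ are pure: only $\Gamma_{ij}^k=c^{k\bar s}c_{ij\bar s}$ and the barred analogue survive. This gives $\hat\nabla\Omega=\tau\,d\log(\rho/\det(-c_{i\bar s}))^{(x)}\otimes\rho\,dx+\bar\tau(\cdots)$, where the superscript means keeping only the $dx$-components. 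Combining with \eqref{def:psi}, the expected outcome is
\[
\beta=2n\,\partial\psi,
\]
meaning $\tau\cdot 2n\,d_x\psi+\bar\tau\cdot 2n\,d_{\bar x}\psi$. Equivalently, $\operatorname{Re}\beta=n\,d\psi$ and the ${\bf k}$-part of $\beta$ is $n\,d\psi\circ K$.

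\textbf{Step 2 (the second fundamental form term).} Because $L$ is Lagrangian and $\omega(\cdot,\cdot)=h(K\cdot,\cdot)$, $K$ maps $TL$ isometrically (up to sign) onto $NL$. So I write $\mathrm{II}(X,e_i)=K v_i$ with $v_i$ tangent, and use the $(n,0)$ property $\Omega(\dots,Kv_i,\dots)={\bf k}\,\Omega(\dots,v_i,\dots)$. Only the $e_i$-component of $v_i$ survives, and it equals $\pm h(\mathrm{II}(X,e_i),Ke_i)$. By symmetry of $h(\mathrm{II}(\cdot,\cdot),K\cdot)$ on a Lagrangian (the usual cubic-form symmetry, which follows from $\hat\nabla K=0$), the sum becomes ${\bf k}\,\omega(\vec H,X)\,\Omega(e)$. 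I expect the signs to be fixed by the convention $\omega=h(K\cdot,\cdot)$ and $K^2=I$.

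\textbf{Step 3 (comparison).} Divide by $e^{{\bf k}\theta+n\psi}$ and insert Step 1 evaluated on $X$. The real parts give $n\,d\psi(X)=n\,d\psi(X)$, which is a consistency check on $\beta$. The ${\bf k}$-parts give $d\theta(X)=\omega(\vec H,X)+n\,d\psi(KX)$. Now $d\psi(KX)=h(\hat\nabla\psi,KX)$, and since $KX$ is normal this equals $h((\hat\nabla\psi)^\perp,KX)=-\omega((\hat\nabla\psi)^\perp,X)$ up to the sign convention. This yields \eqref{212}. Finally, $d\theta(X)=\omega(\vec H_G,X)=h(K\vec H_G,X)$ with $K\vec H_G$ tangent, which gives $\nabla\theta=K\vec H_G$, and exactness follows because $\theta$ is a global function by Proposition \ref{theta is defined}.

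The main obstacle is Step 1. I need to verify that the ${\bf k}$-component of $\beta$ is exactly $n\,d\psi\circ K$, including the $\det(-c_{i\bar s})$ contribution from the Christoffel symbols, and to keep the signs in Steps 2–3 consistent. If the invariant computation becomes messy, I would instead verify \eqref{212} in the graph representation, using $\theta=\frac12\ln(\rho/(\bar\rho\det DT))$ together with $W=-c_{i\bar s}T^s_j$ symmetric.
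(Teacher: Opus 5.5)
Your proposal is correct and follows essentially the same route as the paper. The paper also differentiates $\Omega|_L=e^{\mathbf{k}\theta+n\psi}\,d\mathrm{Vol}_L$ along $L$ and splits the result into two pieces. The ambient piece is $(\hat\nabla_X\Omega)(F_1,\dots,F_n)=2n\,d\psi\big[\tfrac12(I+\mathbf{k}K)X\big]\,\Omega(F_1,\dots,F_n)$, computed from the Kim--McCann Christoffel symbols; this is exactly your $\beta=n\,d\psi+\mathbf{k}\,n\,d\psi\circ K$. The normal piece is $\mathbf{k}\,\alpha_{\vec H}(X)\,\Omega(F_1,\dots,F_n)$, obtained from the full symmetry of $h(A(\cdot,\cdot),K\cdot)$, and the paper then compares $\mathbf{k}$-parts. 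The sign you left open in Step 3 comes out as needed, since $h((\hat\nabla\psi)^\perp,KX)=-\omega((\hat\nabla\psi)^\perp,X)$.
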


\begin{proof}

Equation (\ref{212}) is an equation of one-forms on $T_{p}L,$ so it suffices to check this
equality against a basis for $T_{p}L.$  To that end, let $F_{i}=\partial _{i}F=\left( E_{i},T_{i}^{s}E_{\bar{s}}\right)$ be a standard basis as before for
$T_{p}L$.

Differentiate (\ref{theta}) as a tensor: that is
\begin{align}
\nabla_{F_{i}}\Omega\big|_{L}(F_{1},...,F_{n})  &  =\nabla_{F_{i}}\left(
e^{\mathbf{k}\theta+n\psi}d\text{Vol}_{L}\right) \label{215} \\
&  =\big(  \mathbf{k}d\theta+nd\psi\big)[F_{i}]
\,e^{\mathbf{k}\theta+n\psi}d\text{Vol}_{L}(F_{1},...,F_{n})  \label{216}
\end{align}
where the second line follows because the volume form is parallel along $L.$

Distinguishing between the ambient and intrinsic connections, the left-hand side can be written as follows
\begin{equation}
(\nabla_{F_{i}}\Omega)(F_{1},...,F_{n})\,=\,(\hat{\nabla}_{F_{i}}\Omega
)(F_{1},...,F_{n})\,+\,\sum_{j=1}^{n}\Omega\Big(F_{1},,...,\big(\hat{\nabla
}_{F_{i}}F_{j}\big)^{\perp},....,F_{n}\Big)\,.
\label{full deriv exp for Omega}%
\end{equation}

Recalling (\ref{defineK}) and (\ref{definetau}), consider the following operators on the tangent space to the product
manifold:\
\begin{align*}
\pi_{H} :=\frac{I+K}{2}\,, \qquad
\pi_{V} :=\frac{I-K}{2} \,,
\end{align*}
and 
\[
\spadesuit=\tau\pi_{H}+\bar{\tau}\pi_{V}.%
\]
We claim that%
\begin{equation} \label{4221}
(\hat{\nabla}_{F_{i}}\Omega)(F_{1},...,F_{n})=2nd\psi[\spadesuit F_{i}]
\Omega(F_{1},...,F_{n})
\end{equation}
and
\begin{equation} \label{4222}
\sum_{j=1}^{n}\Omega\Big(F_{1},,...,\big(\hat{\nabla}_{F_{i}}F_{j}%
\big)^{\perp},....,F_{n}\Big)\,=\mathbf{k\,}\alpha_{H}(F_{i})\Omega
(F_{1},...,F_{n}).
\end{equation}

To prove (\ref{4221}), note that the expression is tensorial, and we can  extend the vector fields $F_{i}$
from a point $(x,\bar{x})$ to constant coefficient vectors in a product neighborhood of the
ambient manifold (both vertically and horizontally), so that
$\det\!\bigl(T^{\bar s}_i\bigr)$ is constant. Then we will use the expression

\begin{equation}\label{hat nabla Omega}
    (\hat{\nabla}_{F_{i}}\Omega)(F_{1},...,F_{n})=F_{i}\Omega(F_{1},...,F_{n})-\sum_{k}\Omega(F_{1},...,\hat{\nabla}_{F_{i}}F_{k},...,F_{n}).
\end{equation}

Recalling (\ref{def Omega w tau}), we have
\begin{align*}
E_{i}\Omega(E_{1},...,E_{n})  &  =E_{i}\rho\tau\\
E_{\bar{s}}\Omega(E_{\bar{1}},...,E_{\bar{n}})  &  =E_{\bar{s}}\bar{\rho}\bar{\tau} \,
,
\end{align*}
and thus the first term in \eqref{hat nabla Omega} can be written as
\begin{equation}\label{Dip}
F_{i}\Omega(F_{1},...,F_{n})=\left(  F_{i}\rho\right) \tau dx(F_{1}%
,...,F_{n})+\left(  F_i\bar{\rho}\right)\bar{\tau} d\bar{x}(F_{1}%
,...,F_{n}). 
\end{equation}
Next, we expand
\begin{align*}
\sum_{k} dx(E_{1},\ldots,\hat{\nabla}_{E_{i}}E_{k},\ldots,E_{n})
    &= \sum_{k} \Gamma_{ik}^{k} \\
    &= c^{k\bar{s}} c_{\bar{s} k i} \\
    &= E_{i}\ln\det(-c_{k\bar{s}}) \\
    &= \pi_H(F_{i})\ln\det(-c_{k\bar{s}})
\end{align*}
and similarly for the barred directions, using \cite[Lemma 4.1, stated as Lemma \ref{KM41} below]{KM}. Therefore, for the second term of \eqref{hat nabla Omega} we obtain
\begin{align*}
-\sum_{k}\Omega(F_{1}&,...,\hat{\nabla}_{F_{i}}F_{k},...,F_{n})\\
 =&-\sum_{k}\rho\tau dx(F_{1},...,\hat{\nabla}_{F_{i}}F_{k},...,F_{n})\\
& -\sum_{k}\bar{\rho}\bar{\tau}d\bar{x}(F_{1},...,\hat{\nabla}_{F_{i}}%
F_{k},...,F_{n})\\
=&-\Big(\pi_H(F_{i}) \ln\det(-c_{k\bar{s}})\Big)\rho\tau dx(F_{1},...,F_{k},...,F_{n})\\
&  -\Big(\pi_V(F_{i})\ln
\det(-c_{k\bar{s}})\Big)\bar{\rho}\bar{\tau}d\bar{x}(F_{1},...,F_{k},...,F_{n}) .
\end{align*}
Combining this expression with (\ref{Dip}), we find 
\begin{align*}
(\hat{\nabla}_{F_{i}}\Omega)(F_{1},...,F_{n})  &  =\Big(\pi_H(F_{i})\big[  \ln\rho
-\ln\det(-c_{k\bar{s}})\big]\Big)\rho\tau\,   dx(F_{1},...,F_{n})\\
& \;\;\;\;\; +\,
\Big(\pi_V(F_{i})\big[  \ln\bar{\rho}-\ln\det(-c_{k\bar{s}})\big] \Big)\bar{\rho}\bar{\tau}\, d\bar{x}(F_{1},...,F_{n}). 
\end{align*}
It is easy to check that
\begin{align*}
\pi_H(F_{i})\left[  \ln\rho-\ln\det(-c_{k\bar{s}})\right]   &  =2n\,\pi_{H}(F_{i}%
)\psi\\
\pi_V(F_{i})\left[  \ln\bar{\rho}-\ln\det(-c_{k\bar{s}})\right]   &  =2n\,\pi_{V}%
(F_{i})\psi. 
\end{align*}
Expanding using the relations (\ref{taurelations}), we arrive at (\ref{4221}):
\begin{align*}
2n&\,\big[(\spadesuit F_{i})\psi\big]\,\Omega(F_{1},...,F_{n})\\
&  =2n\,\Big[  \tau\,\pi_{H}(F_{i})\psi+\bar{\tau}
\,\pi_{V}(F_{i})\psi\Big]  \Big(
\rho\tau dx(F_{1},...,F_{n})+\bar{\rho}\bar{\tau}d\bar{x}(F_{1},...,F_{n}%
)\Big) \\
&  =(\hat{\nabla}_{F_{i}}\Omega)(F_{1},...,F_{n}).
\end{align*}
For (\ref{4222}), recall that 
    \begin{align*}
        a_{ijk} \;&=\; h(\hat{\nabla}_{F_i}F_j,KF_k) \,, \\
        \vec{H} &=\; g^{ij}a_{ijk}(-g^{kl})KF_l \,, \\ 
        \alpha_H(\,\cdot\,) \;&=\; -g^{ij}a_{ijk}g^{kl}h(F_l,\cdot\,) \,.
    \end{align*}  Fix $j$ and compute
\begin{align}
\Omega\Big(F_{1},...,\big(\hat{\nabla}_{F_{i}}F_{j}\big)^{\perp}%
,...,F_{n}\Big)\; &  =\;\Omega\Big(F_{1},...,-a_{ijk}g^{kl}KF_{l}%
,...,F_{n}\Big)\nonumber\\
\; &  =\;-a_{ijk}g^{kl}\,\Omega(F_{1},...,KF_{l},...,F_{n})\nonumber\\
\; &  =\;-a_{ijk}g^{kl}\,\mathbf{k}\,\Omega(F_{1},...,F_{l},...,F_{n}%
)\nonumber\\
\; &  =\;-a_{ijk}g^{kl}\,\delta_{lj}\,\mathbf{k}\,\Omega(F_{1},...,F_{n}%
)\,,\nonumber
\end{align}
so that, summing over $j$, we obtain
\begin{align}
\sum_{j=1}^{n}\Omega\Big(F_{1},...,\big(\hat{\nabla}_{F_{i}}F_{j}\big)^{\perp
},...,F_{n}\Big)\; &  =\;-\sum_{j=1}^{n}a_{ijk}g^{kl}\,\delta_{lj}%
\,\mathbf{k}\,\Omega(F_{1},...,F_{n})\nonumber\\
\; &  =\;-a_{ijk}g^{kj}\,\mathbf{k}\,\Omega(F_{1},...,F_{n})\nonumber\\
\; &  =\;\alpha_{\vec{H}}(F_{i})\,\text{\textbf{k}}\,\Omega(F_{1}%
,...,F_{n})\,,\label{mc form part of der Omega}%
\end{align}
where we used the fact that $a_{ijk}$ is fully symmetric in all three indices
because $L$ is Lagrangian; this gives (\ref{4222}).

Now combining (\ref{215} - \ref{4222}), we get 
\[
\Big(  \mathbf{k}d\theta[F_i]+nd\psi[F_i]\Big)\,  \Omega
(F_{1},...,F_{n})=\Big(2nd\psi[\spadesuit F_{i}]+\mathbf{k\,}\alpha_{H}(F_{i})\Big)\,\Omega(F_{1},...,F_{n}).
\]
Expanding the definition of $\spadesuit,$
\[
\spadesuit=\frac{I+\mathbf{k}K}{2}\,,%
\]
we may compare real and non-real parts to obtain
\begin{align*}
d\theta[F_{i}]  &  =nd\psi[KF_{i}]+\mathbf{\,}\alpha_{H}(F_{i})\,.
\end{align*}
Finally, we observe
\begin{align*}
\omega(n(\hat\nabla\psi)  ^{\perp},F_{i})  &  =-h(n(  \hat\nabla
\psi)^{\perp},KF_{i})\\
&  =-nh(\hat\nabla\psi,KF_{i})\\
&=-nd\psi(KF_{i})\,
\end{align*}
and hence we conclude
\[
d\theta|_{L}(F_i)=-\alpha_{n\left(  \nabla\psi\right)  ^{\perp}}(F_i)+\mathbf{\,}%
\alpha_{H}(F_{i})
\]
as desired.
\end{proof}

\section{\label{sect:main}Short-time Existence and Preservation of Lagrangian Condition}

We rely on standard results for mean curvature and generalized mean curvature flow in the Riemannian setting to establish short-time existence. This is well known in the Riemannian case (see \cite[Prop 3.2]{SmoczykSurvey}), and an analogous result for spacelike surfaces in pseudo-Riemannian manifolds appears in \cite[Prop 5.1]{LS}. The extension to generalized mean curvature flow involves only lower-order terms, so short-time existence follows by standard arguments (cf. \cite[Proof of Theorem 1]{SWAsianJMath}).

The following formulas allow us to perform an energetic argument to show that the Lagrangian condition is preserved. Necessarily, these computations need to be in the setting of totally real submanifolds (those containing no para-complex planes), which is an open set containing Lagrangian submanifolds.

We briefly recall the following definition for the reader's convenience: 

\begin{definition}
    A \textit{totally real submanifold} $L$ in the para-K\"{a}hler setting is a submanifold whose tangent space does not contain any para-complex planes; that is, for every $V\in T_{p}L$ we have $KV\notin T_{p}L.$
\end{definition}

For our purposes, it is convenient to consider immersed submanifolds with flows determined locally. 
 Namely,  given some abstract manifold $L_0$, consider a family of immersions  
$$ F: L_0 \times [0,t_0) \rightarrow M\times\bar{M} $$ governed by a flow that is determined locally by geometric quantities in $L_0$. 

\begin{definition}\label{def:gmcf}
An immersed submanifold of a GKM manifold flows by \textit{generalized mean curvature flow} if %
\begin{equation}\label{GKMMCF}
\left(  \frac{\partial F}{\partial t}\right)  ^{\perp}=\vec H-n\left(  \hat\nabla
\psi\right)  ^{\perp}%
\end{equation}
where $\psi$ is as defined in (\ref{def:psi}).
\end{definition}

We note here that the parabolic flow discussed in \cite{KSW} provides a
vertical flow of Lagrangian submanifolds. One can check that the normal projection of
the flow is the generalized mean curvature flow.  This flow requires the map $T$ \cite{KSW} to be globally described as a cost-exponential of a potential.

In this section, we will show that the Lagrangian condition is preserved.   The mean curvature flow itself evolves in the space of totally real submanifolds.  We use a Gronwall type argument to show that the integral of the K\"ahler form must remain 0 for any short time on which the flow exists.  

We also recall \cite[Lemma 4.1]{KM}.
\begin{lemma}[Riemann curvature tensor and Christoffel symbols] \label{KM41}
Use a non-degenerate cost $c \in C^{4}(N)$ to define a pseudo-metric on the domain 
$N \subset M \times \bar M$. In local coordinates $x^1,\ldots,x^n$ on $M$ and 
$x^{\bar 1},\ldots,x^{\bar n}$ on $\bar M$, the only non-vanishing Christoffel symbols are
\begin{equation}\label{eq:Gamma}
\Gamma_{ij}^{\phantom{ij}m} = c^{m\bar k} c_{kij}
\qquad\text{and}\qquad
\Gamma_{i\bar j}^{\phantom{i\bar j}\bar m} = c^{\bar m k} c_{k i \bar j}.
\end{equation}
Furthermore, the components of the Riemann curvature tensor vanish except when the
number of barred and unbarred indices is equal, in which case the value of the component
can be inferred from $R_{ij \bar k \bar l} = 0$ and
\begin{equation}\label{eq:Rijkl}
2 R_{i\bar j \bar k l}
  = c_{i\bar{j} \bar{k} l} - c_{l i \bar f} c^{ a \bar f} c_{ a \bar j \bar k}.
\end{equation}
\end{lemma}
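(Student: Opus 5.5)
The plan is to compute everything directly from the coordinate expression of $h$, using its block structure. In coordinates $x^1,\dots,x^n$ on $M$ and $x^{\bar 1},\dots,x^{\bar n}$ on $\bar M$, the only nonzero components are $h_{i\bar s}=h_{\bar s i}=-\tfrac12 c_{i\bar s}$. The inverse metric is then also off-diagonal:
\[
h^{i\bar s}=h^{\bar s i}=-2c^{i\bar s},
\]
where $c^{i\bar s}$ denotes the inverse of the nondegenerate matrix $c_{i\bar s}$.

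\textbf{Christoffel symbols.} I will insert these components into the Koszul formula
\[
\Gamma_{ab}^{\phantom{ab}m}=\tfrac12 h^{md}\bigl(\partial_a h_{bd}+\partial_b h_{ad}-\partial_d h_{ab}\bigr)
\]
and sort by how many indices are barred.
\begin{itemize}
\item For $\Gamma_{ij}^{\phantom{ij}m}$ with all indices unbarred, $d$ must be barred. The term $\partial_{\bar d}h_{ij}$ vanishes, and the two remaining terms combine to
\[
\tfrac12(-2c^{m\bar d})\bigl(-\tfrac12 c_{j\bar d i}-\tfrac12 c_{i\bar d j}\bigr)=c^{m\bar d}c_{ij\bar d}.
\]
This is the first formula in (\ref{eq:Gamma}) (and the barred formula follows in the same way).
\item For $\Gamma_{ij}^{\phantom{ij}\bar m}$, $d$ must be unbarred, and every term involves some $h_{\cdot d}$ with two unbarred indices, so it vanishes.
\item For the mixed symbols $\Gamma_{i\bar j}^{\phantom{i\bar j}m}$ (and $\Gamma_{i\bar j}^{\phantom{i\bar j}\bar m}$), the two surviving terms are $\partial_{\bar j}h_{i\bar d}$ and $-\partial_{\bar d}h_{i\bar j}$. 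They cancel because partial derivatives of $c$ commute.
\end{itemize}
The conclusion is that $\nabla$ preserves the splitting $TM\oplus T\bar M$ (consistent with $K$ being parallel), and that $\nabla_{\partial_{\bar l}}\partial_i=0$.

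\textbf{Curvature.} The type-counting statement follows structurally. Since $\nabla$ preserves both factors, $R(X,Y)$ maps $TM$ to $TM$ and $T\bar M$ to $T\bar M$. Because $h$ pairs only opposite factors, $R_{abcd}=h(R(\partial_a,\partial_b)\partial_c,\partial_d)$ can be nonzero only when $c,d$ have opposite types. By pair symmetry, the same holds for $a,b$. This leaves the patterns with two barred and two unbarred indices.

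For $R_{ij\bar k\bar l}$, I will compute $R(\partial_{\bar k},\partial_{\bar l})\partial_i$. Every term in it involves $\nabla_{\bar\cdot}\partial_i=0$, so it vanishes. Pair symmetry then gives $R_{ij\bar k\bar l}=0$.

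For the remaining mixed component, I will compute
\[
R(\partial_l,\partial_{\bar k})\partial_i=\nabla_l\nabla_{\bar k}\partial_i-\nabla_{\bar k}\nabla_l\partial_i=-\partial_{\bar k}\bigl(\Gamma_{li}^{\phantom{li}m}\bigr)\partial_m,
\]
using again that mixed Christoffel symbols vanish and that $\nabla_{\bar k}\partial_m$ has no contribution. Pairing with $\partial_{\bar j}$ via $h_{m\bar j}=-\tfrac12c_{m\bar j}$, and differentiating $c^{m\bar f}$ with
\[
\partial_{\bar k}c^{m\bar f}=-c^{m\bar b}c_{a\bar b\bar k}c^{a\bar f},
\]
gives
\[
\tfrac12\bigl(c_{li\bar j\bar k}-c_{li\bar f}c^{a\bar f}c_{a\bar j\bar k}\bigr).
\]
Finally, I will use the antisymmetries and Bianchi identity to relabel this into the form $R_{i\bar j\bar k l}$ of (\ref{eq:Rijkl}).

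I expect the main obstacle to be purely bookkeeping. The sign conventions for $R$ (which slot carries the derivative commutator) and the index placement in $c^{m\bar k}c_{kij}$ must be matched to the conventions of \cite{KM}, so that the final constant is $2R_{i\bar j\bar k l}$ with exactly the displayed sign. I would fix the convention by checking it against the vanishing of $R_{ij\bar k\bar l}$ and the symmetry $R_{i\bar j\bar k l}=R_{l\bar k\bar j i}$, which the right-hand side of (\ref{eq:Rijkl}) visibly satisfies.
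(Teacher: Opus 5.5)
\textbf{Overall.} Your Christoffel computation is correct, and so is the structural argument for which curvature components survive. This direct coordinate computation is the standard route of \cite{KM}; the paper gives no proof of its own and only quotes that lemma. Your structural argument in fact shows that each antisymmetric pair must be mixed, so $R_{ij\bar k\bar l}=0$ follows without the separate computation.

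\textbf{Christoffel symbols versus the printed statement.} What you actually prove is $\Gamma_{ij}^{\phantom{ij}m}=c^{m\bar k}c_{\bar k ij}$ and $\Gamma_{\bar i\bar j}^{\phantom{\bar i\bar j}\bar m}=c^{\bar m k}c_{k\bar i\bar j}$, with \emph{all} mixed symbols zero. This contradicts (\ref{eq:Gamma}) as printed, which lists the mixed symbol $\Gamma_{i\bar j}^{\phantom{i\bar j}\bar m}$ as nonvanishing. That formula, like $c_{kij}$ in place of $c_{\bar k ij}$, is a typo. Say explicitly that you are proving the corrected version, rather than identifying your answer with the displayed formula.

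\textbf{The gap: the sign in (\ref{eq:Rijkl}).} You adopted $R_{abcd}=h(R(\partial_a,\partial_b)\partial_c,\partial_d)$ with $R(X,Y)=\nabla_X\nabla_Y-\nabla_Y\nabla_X-\nabla_{[X,Y]}$. In that convention your (correct) computation gives $2R_{l\bar k i\bar j}=c_{li\bar j\bar k}-c_{li\bar f}c^{a\bar f}c_{a\bar j\bar k}$. Pair symmetry and antisymmetry then force $R_{i\bar j\bar k l}=R_{\bar k l i\bar j}=-R_{l\bar k i\bar j}$. So your relabeling step yields $2R_{i\bar j\bar k l}=-\bigl(c_{i\bar j\bar k l}-c_{li\bar f}c^{a\bar f}c_{a\bar j\bar k}\bigr)$, the opposite of the claim. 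No appeal to the Bianchi identity changes this.

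Your proposed way of pinning the convention cannot detect the error. The identities $R_{ij\bar k\bar l}=0$ and $R_{i\bar j\bar k l}=R_{l\bar k\bar j i}$ hold in every convention. The usual conventions differ only by a global sign $R\mapsto -R$, and both checks are blind to it.

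\textbf{How to fix it.} You need a sign-sensitive normalization. Formula (\ref{eq:Rijkl}) holds exactly for $R(X,Y,Z,W)=h(R(X,Y)W,Z)$. This is the convention in which $R(X,Y,X,Y)$, not $R(X,Y,Y,X)$, is the numerator of the sectional curvature. It is also what the classical formula $R_{ijkl}=\frac12(h_{il,jk}+h_{jk,il}-h_{ik,jl}-h_{jl,ik})+h_{pq}(\Gamma^p_{jk}\Gamma^q_{il}-\Gamma^p_{jl}\Gamma^q_{ik})$ computes.

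In this convention,
\[
R_{i\bar j\bar k l}=h\bigl(R(\partial_i,\partial_{\bar j})\partial_l,\partial_{\bar k}\bigr)=\tfrac12 c_{m\bar k}\,\partial_{\bar j}\Gamma_{il}^{\phantom{il}m},
\]
and your own formula for $\partial_{\bar j}c^{m\bar f}$ gives (\ref{eq:Rijkl}) directly, with no relabeling.

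This is also the convention the paper needs. Only in it does the quantity in (\ref{MTW}) satisfy
\[
\hat R(\xi\oplus\bar 0,0\oplus\bar\xi,\xi\oplus\bar 0,0\oplus\bar\xi)=\tfrac12\bigl(-c_{ik\bar j\bar l}+c_{ik\bar f}c^{a\bar f}c_{a\bar j\bar l}\bigr)\xi^i\xi^k\bar\xi^{\bar j}\bar\xi^{\bar l}.
\]
The right side is the Ma--Trudinger--Wang expression, which is positive, for example, for $c=\tfrac12 d^2$ on a round sphere. Only in it does Corollary \ref{IIcoords} read $\hat R_{i\bar s\bar p j}=\frac12 c_{i\bar s\bar p j}$. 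In your convention, the cross-curvature hypothesis used in the slope estimate would have the wrong sign.
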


\subsection{Special Coordinates}
We begin by developing the coordinates we will need in this section and in the sequel.

\begin{lemma}[Special coordinates of type I] \label{srecial coords lemma}
Suppose that $L$ is locally a graph over $M$ in $M\times \bar{M}$ near $(p,%
\bar{p})$ for $\bar{p}=T(p).$\ Given any coordinate system on $M$, it is
possible to choose coordinates for $\bar{M}$ near $\bar{p}$ such that  
\[
c_{i\bar{s}}\left( p,\bar{p}\right) =-\delta _{i\bar{s}}
\]%
and 
\[
c_{\bar{s}\bar{q}i}(p,\bar{p})=0\text{ for all }\bar{s},\bar{q},i.
\]
\end{lemma}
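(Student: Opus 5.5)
We will construct the coordinates on $\bar M$ in two steps: a linear change to normalize $c_{i\bar s}$, followed by a quadratic correction to kill $c_{\bar s\bar q i}$. Fix coordinates $x^1,\dots,x^n$ on $M$ near $p$ and arbitrary coordinates $y^1,\dots,y^n$ on $\bar M$ near $\bar p$, with $y(\bar p)=0$. Non-degeneracy of the cost (equivalently, non-degeneracy of $h$ away from $\mathscr{C}$) says the matrix $A_{is}:=-\partial_{x^i}\partial_{y^s}c(p,\bar p)$ is invertible.

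\textbf{Step 1.} Set $\tilde y = A^{T} y$, that is, $y = A^{-T}\tilde y$. Mixed second derivatives transform tensorially under a change of the $\bar M$ coordinates, so $-\partial_{x^i}\partial_{\tilde y^s}c(p,\bar p)=\delta_{is}$. This gives the first condition.

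\textbf{Step 2.} Look for new coordinates $\bar x$ of the form
\[
\tilde y^{s}=\bar x^{s}+\tfrac12 B^{s}_{qr}\bar x^{q}\bar x^{r},
\]
with $B$ symmetric in $q,r$. This map is a local diffeomorphism near $0$ and its Jacobian at $0$ is the identity, so the normalization $c_{i\bar s}(p,\bar p)=-\delta_{i\bar s}$ is unchanged. Differentiating $c$ once in $x^i$ and twice in $\bar x$, the chain rule at $\bar x=0$ gives
\[
c_{i\bar s\bar q}=\partial_{x^i}\partial_{\tilde y^s}\partial_{\tilde y^q}c+\partial_{x^i}\partial_{\tilde y^m}c\;B^{m}_{sq}
=\partial_{x^i}\partial_{\tilde y^s}\partial_{\tilde y^q}c-B^{i}_{sq}.
\]
So we choose $B^{i}_{sq}:=\partial_{x^i}\partial_{\tilde y^s}\partial_{\tilde y^q}c(p,\bar p)$. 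This choice is automatically symmetric in $s,q$, which is exactly the symmetry the ansatz requires, and it makes $c_{\bar s\bar q i}(p,\bar p)=0$.

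The coordinates on $M$ are never changed, so the construction works for any prescribed chart on $M$, and the graph structure of $L$ plays no role beyond fixing the point $\bar p=T(p)$. The cost $c$ is only locally defined, but it is $C^4$ away from $\mathscr{C}$, which is enough for the third derivatives used here. The only point needing care is the chain-rule computation in Step 2. There, the terms involving second derivatives of the coordinate change are what contribute $B$, while terms with two factors of the Jacobian reduce to the identity at $\bar p$. We verify the sign of the $B$ term using $c_{i\bar m}=-\delta_{im}$ from Step 1.
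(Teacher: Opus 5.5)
Your construction is the same as the paper's. You make a linear change on $\bar M$ to normalize the mixed Hessian, then apply the quadratic correction $\tilde y^{s}=\bar x^{s}+\tfrac12 c_{s\bar q\bar r}(p,\bar p)\,\bar x^{q}\bar x^{r}$, which is exactly the paper's choice. Your chain-rule computation in Step 2, including the sign of the $B$ term, is correct.

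However, Step 1 has the transpose the wrong way around, so as written it does not give $c_{i\bar s}(p,\bar p)=-\delta_{i\bar s}$. Write $y=P\tilde y$. The chain rule gives
\[
-\partial_{x^i}\partial_{\tilde y^s}c(p,\bar p)=\sum_r A_{ir}P^{r}_{\ s}=(AP)_{is}.
\]
Your choice $\tilde y=A^{T}y$ means $P=A^{-T}$, which yields $AA^{-T}$. That is the identity only when $A$ is symmetric. There is no reason for $A$ to be symmetric, since it pairs an arbitrary chart on $M$ with a chart on $\bar M$. For instance, take $c=-(x^1y^1+x^1y^2+x^2y^2)$. Your coordinates are then $\tilde y^1=y^1$ and $\tilde y^2=y^1+y^2$, and $-\partial_{x^1}\partial_{\tilde y^1}c=0$. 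The fix is to take $\tilde y=Ay$, i.e.\ $\tilde y^{i}=-c_{i\bar r}(p,\bar p)\,y^{r}$, which is the paper's $y=Q^{-1}z$. With that correction, the rest of your argument goes through unchanged.
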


\begin{proof}
Fixing the chart for $M$, and taking any arbitrary chart for $\bar{M}$
(assuming $(p,\bar{p})$ $\rightarrow (0,0)$) compute 
\[
Q_{s}^{i}:=-c_{i\bar{s}}(0,0)
\]%
and consider first a linear change of coordinates 
\[
y^{s}\left( z^{1},...z^{n}\right) =\left( Q^{-1}\right) _{p}^{s}z^{p}.
\]%
One can then check that 
\[
\partial _{z^{p}}\partial _{x^{i}}c(0,0)=c_{i\bar{s}}\frac{\partial y^{s}}{%
\partial z^{p}}=c_{i\bar{s}}\left( Q^{-1}\right) _{p}^{s}=-\delta _{ip}.
\]%
Next, we modify these slightly to eliminate the particular third derivatives.
\ \ 

Start by assuming $c_{i\bar{s}}=-\delta _{i\bar{s}}$ holds at the origin for
coordinates $\left( x,\bar{y}\right) $ and set  
\[
y^{\bar{s}}=z^{\bar{s}}+\frac{1}{2}c_{s\bar{m}\bar{p}}(0,0)z^{\bar{p}}z^{%
\bar{m}}
\]%
which is a diffeomorphism in a small neighborhood. \ Here%
\[
c_{s\bar{m}\bar{p}}(0,0):=\frac{\partial ^{3}c}{\partial y^{\bar{m}}\partial
y^{\bar{p}}\partial x^{s}}(0,0).
\]
Note that 
\[
\frac{\partial y^{\bar{s}}}{\partial z^{\bar{p}}}=\delta _{\bar{p}}^{\bar{s}%
}+c_{s\bar{m}\bar{p}}(0,0)z^{\bar{m}}
\]%
\[
\frac{\partial ^{2}y^{\bar{s}}}{\partial z^{\bar{q}}\partial z^{\bar{p}}}%
=c_{s\bar{q}\bar{p}}(0,0).
\]%
\  Then compute  
\[
\frac{\partial ^{2}c(x,\bar{z})}{\partial z^{\bar{p}}\partial x^{i}}=\frac{%
\partial ^{2}c(x,\bar{y}(\bar{z}))}{\partial y^{\bar{s}}\partial x^{i}}\frac{%
\partial y^{\bar{s}}}{\partial z^{\bar{p}}}(x,\bar{z})
\]%
and take one more derivative
\begin{eqnarray*}
\frac{\partial ^{3}c(x,\bar{z})}{\partial z^{\bar{q}}\partial z^{\bar{p}%
}\partial x^{i}}(0,0) &=&\frac{\partial ^{3}c}{\partial y^{\bar{m}}\partial
y^{\bar{s}}\partial x^{i}}(0,0)\frac{\partial y^{\bar{m}}}{\partial z^{\bar{q%
}}}\frac{\partial y^{\bar{s}}}{\partial z^{\bar{p}}}(0,0)+\frac{\partial
^{2}c}{\partial y^{\bar{s}}\partial x^{i}}(0,0)\frac{\partial ^{2}y^{\bar{s}}%
}{\partial z^{\bar{q}}\partial z^{\bar{p}}}(0,0) \\
&=&c_{\bar{m}\bar{s}i}(0,0)\delta _{\bar{q}}^{\bar{m}}\delta _{\bar{p}}^{%
\bar{s}}+(-\delta _{i\bar{s}})c_{sq\bar{p}}(0,0))\ =c_{\bar{q}\bar{p}i}-c_{i%
\bar{q}\bar{p}}=0.
\end{eqnarray*}
\end{proof}

To be clear, we define \textit{Special coordinates of type I} to be a coordinate system where we have taken an arbitrary coordinate system on $M$ and chosen a chart for $\bar{M}$ so that the conclusions of Lemma \ref{srecial coords lemma} hold.  

We will also introduce \textit{Special coordinates of type II}, defined as follows.

\begin{corollary}[Special coordinates of type II]\label{IIcoords}
Suppose that $L$ is locally a graph over $M$ in $M\times \bar{M}$ near $(p,%
\bar{p})$ for $\bar{p}=T(p).$ We may choose normal coordinates for $M$ with
respect to the induced metric on $L$ and a coordinate system for $\bar{M}$
such that at $\left( p,\bar{p}\right) $ we have%
\begin{eqnarray*}
g_{ij}(p) &=&\delta _{ij} \\
\partial_{x^{k}}g_{ij}(p) &=&0 \\
c_{i\bar{s}}(p,\bar{p}) &=&-\delta _{i\bar{s}} \\
\hat{R}_{i\bar{s}\bar{p}j} &=&\frac{1}{2}c_{i\bar{s}\bar{p}j} \\
T_{i}^{\bar{j}} &=&\delta _{i}^{\bar{j}}-\omega |_{_{L}ij}.
\end{eqnarray*}
\end{corollary}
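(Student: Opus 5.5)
The plan is to build the coordinates in two stages, exploiting that Lemma \ref{srecial coords lemma} allows an \emph{arbitrary} coordinate system on $M$. By Lemma \ref{graphical}, near $(p,\bar p)$ the spacelike submanifold $L$ is the graph of a map $T$ over a neighborhood $U\subset M$. The projection $L\to U$ is therefore a local diffeomorphism, so the induced metric on $L$ pushes forward to a Riemannian metric $g$ on $U$. First, I fix Riemannian normal coordinates $x^1,\dots,x^n$ for $g$ centered at $p$. Standard facts about normal coordinates then give $g_{ij}(p)=\delta_{ij}$ and $\partial_{x^k}g_{ij}(p)=0$, which are the first two claims.

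Next, holding this chart on $M$ fixed, I apply Lemma \ref{srecial coords lemma} to choose coordinates on $\bar M$ near $\bar p$ with $c_{i\bar s}(p,\bar p)=-\delta_{i\bar s}$ and $c_{\bar s\bar q i}(p,\bar p)=0$. Because this step changes only the $\bar M$ chart, and $g$ was defined on $M$, the normalizations $g_{ij}(p)=\delta_{ij}$ and $\partial_k g_{ij}(p)=0$ survive. This gives the third claim.

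For the curvature identity, I use Lemma \ref{KM41}:
\[
2\hat R_{i\bar s\bar p j}=c_{i\bar s\bar p j}-c_{j i\bar f}\,c^{a\bar f}\,c_{a\bar s\bar p}.
\]
At $(p,\bar p)$ the factor $c_{a\bar s\bar p}$ vanishes by the type I normalization. The second term therefore drops out, leaving $\hat R_{i\bar s\bar p j}=\tfrac12 c_{i\bar s\bar p j}$. Some care is needed to confirm that the index ordering and symmetries of the curvature convention in \cite{KM} match the formula as stated.

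For the last identity, I use $F_i=(E_i,T_i^{s}E_{\bar s})$ and compute at $(p,\bar p)$ with $c_{i\bar s}=-\delta_{i\bar s}$. From the formula in Proposition \ref{theta is defined},
\[
g_{ij}=\tfrac12\big(-c_{i\bar s}T_j^s-c_{j\bar s}T_i^s\big)=\tfrac12\big(T_j^i+T_i^j\big).
\]
Combining this with $g_{ij}=\delta_{ij}$ shows that the symmetric part of $T$ is the identity. Evaluating $\omega=-\tfrac12 c_{k\bar s}\,dx^k\wedge d\bar x^s$ on $(F_i,F_j)$ gives
\[
\omega|_L{}_{ij}=-\tfrac12 c_{k\bar s}\big(\delta_{ik}T_j^s-\delta_{jk}T_i^s\big)=\tfrac12\big(T_j^i-T_i^j\big).
\]
This is minus the antisymmetric part of $T$. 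Adding the symmetric and antisymmetric parts yields $T_i^{\bar j}=\delta_i^{\bar j}-\omega|_L{}_{ij}$.

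I expect no genuine obstacle, since the argument is bookkeeping. The main points to watch are the sign conventions for $\omega$ and the curvature index ordering. It is also worth recording that the construction does not need $L$ to be Lagrangian, which is why the $\omega|_L$ term is kept.
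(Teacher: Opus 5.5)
Your proposal is correct and follows essentially the same route as the paper. The paper likewise takes normal coordinates for the induced metric on $M$, uses the type I normalization $c_{a\bar s\bar p}=0$ to kill the second term of the Kim--McCann curvature formula, and obtains $T_i^{\bar j}=\delta_i^{\bar j}-\omega|_L{}_{ij}$ from $g_{ij}=\tfrac12(T_j^i+T_i^j)$ and $\omega_{ij}=\tfrac12(T_j^i-T_i^j)$. You are slightly more explicit than the paper in noting that changing only the $\bar M$ chart leaves the normalization of $g$ intact, and in observing that the Lagrangian condition is not needed.
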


\begin{proof}
Inspecting the expression in Lemma \ref{KM41}
expression, eliminating the third derivatives with 2 barred indices, eliminating the second term, and we are left
with  $\frac{1}{2}c_{i\bar{s}\bar{p}j}.$ \ \ Next, noting that 
\begin{eqnarray*}
g_{ij} &=&F_{i}\cdot F_{j} \\
&=&\left( E_{i}+T_{i}^{s}E_{\bar{s}}\right) \cdot \left( E_{j}+T_{j}^{\bar{p}%
}E_{\bar{p}}\right)  \\
&=&\frac{-1}{2}\left( c_{i\bar{s}}T_{j}^{s}+c_{j\bar{s}}T_{i}^{s}\right)  \\
&=&\frac{1}{2}\left( T_{j}^{\bar{\imath}}+T_{i}^{\bar{j}}\right) 
\end{eqnarray*}%
and 
\begin{eqnarray*}
\omega (F_{i},F_{j}) &=&h(KF_{i},F_{j}) \\
&=&\left( E_{i}-T_{i}^{\bar{s}}E_{\bar{s}}\right) \cdot \left( E_{j}+T_{j}^{%
\bar{p}}E_{\bar{p}}\right)  \\
&=&-\frac{1}{2}T_{i}^{\bar{j}}+\frac{1}{2}T_{j}^{\bar{\imath}}
\end{eqnarray*}%
we have  
\[
g_{ij}=T_{i}^{\bar{j}}+\omega _{ij}
\]%
or  
\[
T_{i}^{\bar{j}}=\delta _{ij}-\omega _{ij}.
\]
\end{proof}

\subsection{Totally Real Submanifolds}

Next, we need to develop our setup in the totally real setting. 
First, given a totally real submanifold $L$, we define
\begin{align*}
\tilde{K}    :T_{p}L\rightarrow\left(  T_{p}L\right)  ^{\perp}\\
 \tilde{K}(X)    =\left(  K(X)\right)  ^{\perp}.
\end{align*}

Taking $\left\{  F_{i}\right\}  $ to be a basis of the tangent space of
$L^{n},$ one can check that defining%
\[
\omega_{i}^{l}=\omega_{ij}g^{jl}%
\]
for
\[
\omega_{ij}=\omega(F_{i},F_{j})
\]
and induced metric
\[
g_{ij}=h(F_{i},F_{j}),
\]
we get
\begin{equation}
\label{define Ktwiddle}
\tilde{K}(F_{i})=K(F_{i})-\omega_{i}^{m}F_{m}.
\end{equation}
Notice that
\begin{align*}
K\tilde{K}  &  =K^{2}(F_{i})-\omega_{i}^{l}KF_{l}\\
&  =F_{i}-\omega_{i}^{l}KF_{l}.
\end{align*}
As always, we assume a K\"{a}hler condition%
\[
\omega=h(K\cdot,\cdot).
\]
Note that in a para-K\"{a}hler manifold, we have\
\[
h(KX,Y)=-h(K^{2}X,KY)=-h(X,KY).
\]
We will also use the (negative definite) metric on $\left\{  \tilde{K}%
F_{i}\right\}  $ which forms a basis for the normal space \
\[
\eta_{ij}=h(\tilde{K}F_{i},\tilde{K}F_{j}).
\]

Next, we state the following proposition. The proof will be deferred to subsection \ref{pf}, where it will be established through a series of lemmas.
\begin{proposition}\label{evl4}
Suppose that $L$ is flowing by generalized mean curvature flow on a time
interval $\left[ 0,t_{0}\right] .$ \ \ There are bounded $G,G^{\prime }$
quantities such that 
\begin{equation*}
d\left( \alpha _{\vec{H}}-n\alpha _{\left( \nabla \psi \right) ^{\perp
}}\right) =-dd|_{L}^{\ast }\omega +\omega\ast G+\nabla \omega \ast G^{\prime }.
\end{equation*}%
Here $G,G^{\prime }$ are not a priori bounded, but are bounded depending on the particular
flow and the time interval, and $*$ refers to some geometric contraction. 
\end{proposition}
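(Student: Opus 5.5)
The plan is to redo the Lagrangian-angle computation of the preceding Proposition (the one giving $d\theta=\alpha_{\vec H-n(\hat\nabla\psi)^\perp}$) on a totally real, not necessarily Lagrangian, $L$, keeping every term that used to vanish because $\omega|_L=0$. We then take the exterior derivative. On a totally real $L$ the pairing $\Omega|_L(F_1,\dots,F_n)$ is still a nonzero para-complex number near the Lagrangian locus. So we can still define $\theta$ by $\Omega|_L=e^{\mathbf{k}\theta+n\psi}\,\mu\, d\mathrm{Vol}_L$, where $\mu=\|\Omega|_L\|_{\text{para}}e^{-n\psi}/d\mathrm{Vol}_L$ is a real factor with $\mu=1$ when $\omega|_L=0$ (one checks that $\mu$ is a smooth function of $\omega_{ij}$ via $\det W$ versus $\det\frac{W+W^T}{2}$). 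Then $d^2\theta=0$, and the whole content of the statement lies in the error terms.

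\textbf{Step 1 (the ambient part).} Equation (\ref{4221}) only used the product structure and Lemma \ref{KM41}, not the Lagrangian condition. It therefore holds verbatim and contributes $2n\,d\psi[\spadesuit F_i]$. Its non-real part gives $n\,d\psi(KF_i)$. Writing $KF_i=\tilde K F_i+\omega_i^mF_m$ by (\ref{define Ktwiddle}), this equals $-n\alpha_{(\hat\nabla\psi)^\perp}(F_i)$ plus terms of the form $\omega\ast d\psi$.

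\textbf{Step 2 (the second fundamental form part).} In (\ref{4222}) we now decompose $(\hat\nabla_{F_i}F_j)^\perp$ in the basis $\tilde KF_l$ using $\eta^{kl}$. We also rewrite $\Omega(\dots,\tilde KF_l,\dots)=\mathbf{k}\,\Omega(\dots,F_l,\dots)-\omega_l^m\Omega(\dots,F_m,\dots)$. This produces $\mathbf{k}\,h^{\,\cdot}{}_{ij}\,g^{ij}$-type traces plus $\omega\ast A$ terms. The coefficients $a_{ijk}=h(\hat\nabla_{F_i}F_j,KF_k)$ are no longer fully symmetric. However,
\[
a_{ijk}-a_{ikj}=(\nabla_{F_i}\omega)(F_j,F_k)
\]
because $K$ is parallel. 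Hence the trace over $(i,j)$ equals $-g^{kl}\alpha_H$ up to $\nabla\omega\ast g^{-1}$ terms. Altogether we obtain
\[
d\theta=\alpha_{\vec H}-n\alpha_{(\hat\nabla\psi)^\perp}+\omega\ast G_0+\nabla\omega\ast G_0' + d(\text{real part involving }\log\mu).
\]
The real part of the same identity determines $d\log\mu$ and is irrelevant here.

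\textbf{Step 3 (taking $d$).} Applying $d$ and using $d^2\theta=0$ gives
\[
d(\alpha_{\vec H}-n\alpha_{(\nabla\psi)^\perp})=-d(\omega\ast G_0+\nabla\omega\ast G_0').
\]
Expanding, the terms $d\omega\ast G_0$ and $\omega\ast dG_0$ are of the allowed form. Here $d\omega|_L=0$ holds, and the derivatives of $G_0$ involve $\nabla A$ and ambient curvature, all bounded on a compact time interval of a smooth flow. The remaining term is $d(\nabla\omega\ast G_0')$, which contains second derivatives of $\omega$. The main obstacle is to show that its principal part is exactly $-dd^\ast_L\omega$ and nothing else at second order. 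To do this I would isolate the antisymmetric trace $g^{ij}(a_{ijk}-a_{ikj})=g^{ij}\nabla_i\omega_{jk}=-(d^\ast\omega)_k$. This identifies the leading one-form error as $-d^\ast\omega$ plus $\nabla\omega\ast(\text{lower order})$ and $\omega\ast(\ldots)$. I would then carefully check that every other $\nabla\omega$ contribution carries a coefficient that is itself $O(\omega)$, coming from $g^{-1}$ versus $\eta^{-1}$ discrepancies which vanish when $\omega=0$. Differentiating such a contribution therefore gives only $\omega\ast\nabla^2\omega$-free terms after regrouping, or otherwise forces absorbing them into $G'$ via $\nabla(\omega\ast\nabla\omega)=\nabla\omega\ast\nabla\omega+\omega\ast\nabla^2\omega$. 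Both of these have the form $\nabla\omega\ast G'$ or $\omega\ast G$ with $G,G'$ depending on the flow. This bookkeeping, done in the special coordinates of type II at a point, is where I expect most of the effort.
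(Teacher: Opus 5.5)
Your route is correct and genuinely different from the paper's.

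\textbf{The paper's route.} The paper never extends $\theta$ off the Lagrangian locus. It writes $\alpha_{\vec H}=-d^\ast_L\omega+\beta$, with $\beta$ a trace of $\omega(F_i,A(F_j,F_k))$. A Gauss--Codazzi computation then gives $d\beta=\hat{R}ic(\cdot,K\cdot)+\omega\ast G$, which is the analogue of Dazord's theorem (Theorem~\ref{Dazord}). In special coordinates of type II this is $\sum_i C(k,\bar l,\bar\imath,i)-C(l,\bar k,\bar\imath,i)$. Finally, differentiating the explicit formula for $\psi$ shows that $d\alpha_{n(\hat\nabla\psi)^\perp}$ yields the same fourth-order cost expression, so the two cancel.

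\textbf{Your route.} You let the tensor identity $\hat\nabla_X\Omega=2n\,d\psi(\spadesuit X)\,\Omega$, valid for all $X$, carry that curvature cancellation, and you collect it from $d(d\theta)=0$. This works on every spacelike $L$, not just near the Lagrangian locus. The symmetric part of $W$ is positive definite, so $\det W>0$, and hence $\theta=\frac12\ln\bigl(\rho\,dx|_L/\bar\rho\,d\bar x|_L\bigr)$ is a smooth function.

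\textbf{Comparison.} The paper's route yields the explicit curvature identity in terms of derivatives of $c$. Yours needs only first-order facts, and the Step 3 bookkeeping you anticipate is actually trivial. The non-real part of the second fundamental form term is exactly $a_{kjp}\eta^{pj}$, with $\eta^{-1}=-g^{-1}+\omega\ast\omega\ast G$. So $\nabla\omega$ enters only through the antisymmetric part of $a$, and one gets the exact identity
\[
d\theta=\alpha_{\vec H}-n\alpha_{(\hat\nabla\psi)^\perp}+d^\ast_L\omega+\omega\ast G_0 ,
\]
where $G_0$ is built from $d\psi$, $A$, $g^{-1}$ and $\omega$. There are no further $\nabla\omega$ terms with $O(\omega)$ coefficients to chase.

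\textbf{The sign needs fixing.} This matters because the sign is exactly what the Gronwall argument needs. With $\omega=h(K\cdot,\cdot)$ and $a_{ijk}=h(A(F_i,F_j),KF_k)$, one has $a_{ijk}-a_{ikj}=-(\nabla_{F_i}\omega)(F_j,F_k)$. The error one-form is therefore $+d^\ast_L\omega$, and
\[
d\bigl(\alpha_{\vec H}-n\alpha_{(\hat\nabla\psi)^\perp}\bigr)=-dd^\ast_L\omega-d(\omega\ast G_0).
\]
You took the opposite sign for $a_{ijk}-a_{ikj}$ and identified the leading error as $-d^\ast\omega$. After applying $-d$, that would give $+dd^\ast_L\omega$, which is backward parabolic.
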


Assuming Proposition \ref{evl4}, we now prove the main result of this section.

\begin{proposition}
Suppose that $L$ is flowing by generalized mean curvature flow on a time
interval $\left[ 0,t_{0}\right] .$ \ Then 
\begin{equation*}
\frac{d}{dt}\int \left\Vert \omega |_{L}\right\Vert _{g}^{2}dV_{g}(t)\leq
C(G,G^{\prime },t_{0})\left\Vert \omega |_{L}\right\Vert _{g}^{2}(t).
\end{equation*}
\end{proposition}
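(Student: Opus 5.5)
Because the flow equation \eqref{GKMMCF} prescribes only the normal velocity, I would first reparametrize by a tangential diffeomorphism so that $\partial_t F = V := \vec H - n(\hat\nabla\psi)^\perp$ exactly. This changes neither the image submanifolds nor $\int \|\omega|_L\|_g^2\,dV_g$. The pulled-back two-form then evolves by Cartan's formula:
\begin{equation*}
\frac{\partial}{\partial t}F^*\omega = F^*\mathcal{L}_V\omega = d\bigl(F^*(\iota_V\omega)\bigr),
\end{equation*}
using $d\omega=0$. The one-form $F^*(\iota_V\omega)$ is exactly $\alpha_{\vec H}-n\alpha_{(\nabla\psi)^\perp}$, now understood on the totally real submanifold, where $V$ need not be $K$ of a tangent vector. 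Proposition \ref{evl4} then gives
\begin{equation*}
\partial_t \omega|_L = -dd^*\omega|_L + \omega * G + \nabla\omega * G'.
\end{equation*}
Since $\omega|_L$ is closed, $-dd^*\omega|_L = \Delta_{\mathrm{Hodge}}\,\omega|_L$, where $\Delta_{\mathrm{Hodge}}=-(dd^*+d^*d)$. This is a heat-type equation modulo lower-order terms.

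Next I would differentiate the energy, accounting for the time dependence of the metric. We have $\partial_t g_{ij} = -2h(V, \hat\nabla_{F_i}F_j)$ up to sign conventions, and $\partial_t dV_g = -h(V,\vec H)\,dV_g$ with the appropriate sign for this signature. These contribute terms of the form $\omega*\omega*A*V$ and $\omega*\omega*\langle V,\vec H\rangle$, where $A$ is the second fundamental form. On the compact time interval $[0,t_0]$ of a smooth flow of compact immersions, these coefficients are bounded, and I would absorb them into $C(G,G',t_0)$. The principal contribution is
\begin{equation*}
2\int\langle \omega|_L, -dd^*\omega|_L\rangle\,dV_g = -2\int |d^*\omega|_L|^2\,dV_g \le 0,
\end{equation*}
obtained by integrating by parts on the closed manifold $L$.

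The main obstacle is the term $\int \omega * \nabla\omega * G'$, which involves a derivative of $\omega$ and is not controlled by $\|\omega\|^2$ alone. I would bound it by Cauchy--Schwarz as $\epsilon\int|\nabla\omega|^2 + C_\epsilon\int|\omega|^2$, and then absorb $\epsilon\int|\nabla\omega|^2$ into the good term. To do so I need a Gaffney/Bochner estimate on the compact Riemannian manifold $(L,g(t))$:
\begin{equation*}
\int|\nabla\omega|^2 \le \int\bigl(|d\omega|^2+|d^*\omega|^2\bigr) + C\int|\omega|^2 = \int|d^*\omega|^2 + C\int|\omega|^2,
\end{equation*}
using $d\omega|_L=0$. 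Here $C$ depends on the intrinsic curvature of $g(t)$, which is bounded on $[0,t_0]$. If this absorption became awkward, I would instead integrate by parts to move the derivative off $\omega$ and onto $G'$, provided $\nabla G'$ is also bounded for the given smooth flow. Taking $\epsilon$ small yields the stated inequality $\frac{d}{dt}\int\|\omega|_L\|^2 \le C\int\|\omega|_L\|^2$.
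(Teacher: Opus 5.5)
Your proposal is correct and follows essentially the same route as the paper. Both arguments use Cartan's formula together with Proposition \ref{evl4}, integrate by parts to obtain $-2\int |d^*\omega|^2$, split the $\nabla\omega * G'$ term by Cauchy--Schwarz, and absorb $\epsilon\int|\nabla\omega|^2$ via the Weitzenb\"ock formula (using $d\omega|_L=0$); your tangential reparametrization and your tracking of the time dependence of $g$ in the norm and volume form are steps the paper's write-up leaves implicit, and they are absorbed into the flow-dependent constant exactly as you indicate.
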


\begin{proof}
Note that along the flow, 
\begin{equation*}
\frac{dF}{dt}=\vec{H}-n\left( \nabla \psi \right) ^{\perp }
\end{equation*}%
we have  
\begin{eqnarray*}
\frac{d}{dt}\omega  &=&\mathcal{L}_{\vec{H}-n\left( \nabla \psi \right)
^{\perp }}\omega =d\left( \iota _{\vec{H}-n\left( \nabla \psi \right)
^{\perp }}\omega \right) +\iota _{\vec{H}-n\left( \nabla \psi \right)
^{\perp }}d\omega  \\
&=&d\left( \alpha _{\vec{H}}-n\alpha _{\left( \nabla \psi \right) ^{\perp
}}\right)  \\
&=&-dd|_{L}^{\ast }\omega +\omega \ast G+\nabla \omega \ast G^{\prime }.
\end{eqnarray*}%
Thus 
\begin{equation*}
\frac{d}{dt}\int \left\Vert \omega |_{L}\right\Vert _{g}^{2}dV_{g}(t)=\int
2\langle \omega ,\frac{d}{dt}\omega \rangle _{g}dV_{g}(t)+\left\Vert \omega
|_{L}\right\Vert _{g}^{2}\frac{d}{dt}dV_{g}(t).
\end{equation*}%
Now 
\begin{eqnarray*}
\int 2\langle \omega ,\frac{d}{dt}\omega \rangle _{g}dV_{g}(t) &=&\int
2\langle \omega ,-dd|_{L}^{\ast }\omega +\omega \ast G+\nabla \omega \ast
G^{\prime }\rangle _{g}dV_{g}(t) \\
&=&-2\int \langle d|_{L}^{\ast }\omega ,d|_{L}^{\ast }\omega \rangle
dV_{g}(t)+\int 2\langle \omega ,\omega \ast G+\nabla \omega \ast G^{\prime
}\rangle _{g}dV_{g}(t) \\
&\leq &-2\int \left\Vert d|_{L}^{\ast }\omega \right\Vert
^{2}dV_{g}(t)+C_{1}(G)\int \left\Vert \omega |_{L}\right\Vert ^{2}dV_{g}(t)
\\
&&+\frac{C_{2}(G^{\prime })}{\varepsilon }\int \left\Vert \omega
|_{L}\right\Vert ^{2}dV_{g}(t)+\varepsilon \int \left\Vert \nabla \omega
|_{L}\right\Vert ^{2}dV_{g}(t).
\end{eqnarray*}%

\bigskip By The Weitzenb\"{o}ck Formula \cite[Theorem 9.8, see
Definition 9.6, Proposition 9.7 and (2.1.26, 2.1.28, 9.3.1, 1.0.3)\ ]{MMMT}
we know that 
\begin{equation*}
-\nabla ^{\ast }\nabla \omega =-dd^{\ast }\omega -\mathfrak{Ric(\omega )}
\end{equation*}%
where $\mathfrak{Ric}$ is a linear operator $\Lambda ^{2}(L)\rightarrow
\Lambda ^{2}(L)$ with coefficients that depend linearly on the Riemannian
curvature tensor of $\left( L,g\right) $:

\begin{equation*}
\mathfrak{Ric=}\sum_{i,j,k,l}R_{ijkl}dx_{i}\wedge (dx_{j}\vee (dx_{k}\wedge
(dx_{l}\vee \cdot ))).
\end{equation*}%

 Next
\begin{eqnarray*}
\int \left\Vert \nabla \omega |_{L}\right\Vert ^{2}dV_{g} &=&\int \langle
\nabla ^{\ast }\nabla \omega ,\omega \rangle _{g}dV_{g} \\
&=&\int \langle dd^{\ast }\omega +\mathfrak{Ric} (\omega) ,\omega \rangle _{g}dV_{g} \\
&\leq &\int \left\Vert d|_{L}^{\ast }\omega \right\Vert
^{2}dV_{g}(t)+C_{3}(\mathfrak{Ric})\int \left\Vert \omega |_{L}\right\Vert ^{2}dV_{g}(t).
\end{eqnarray*}%
Putting this together, we get 
\begin{equation*}
\frac{d}{dt}\int \left\Vert \omega |_{L}\right\Vert _{g}^{2}dV_{g}(t)\leq
-\int \left\Vert d|_{L}^{\ast }\omega \right\Vert ^{2}dV_{g}(t)+C_{4}\int
\left\Vert \omega |_{L}\right\Vert ^{2}dV_{g}(t).
\end{equation*}
\end{proof}

\begin{corollary}
Suppose that $L$ is flowing by generalized mean curvature flow on a
time interval $\left[ 0,t_{0}\right] $ and $\omega |_{L}\equiv 0$ when $t=0.$
\ Then $\omega |_{L}\equiv 0$ for all $t$ in the interval. \ 
\end{corollary}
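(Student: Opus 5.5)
This is a Gronwall argument applied to the preceding proposition. Set
\[
E(t):=\int_{L}\left\Vert \omega|_{L}\right\Vert_{g}^{2}\,dV_{g}(t),
\]
which is well defined and differentiable in $t$ on $[0,t_0]$ for the compactly immersed smooth flow. By the previous proposition,
\[
E'(t)\le C\,E(t),\qquad C=C(G,G^{\prime},t_{0}).
\]
The key point is that $C$ is a single constant on the whole compact interval $[0,t_0]$. The quantities $G,G'$ from Proposition \ref{evl4} are built from the second fundamental form, ambient curvature, derivatives of $\psi$, and the metrics $g$, $\eta$ along the flow. The Ricci term $\mathfrak{Ric}$ used in the Weitzenb\"ock step is likewise controlled by the intrinsic curvature of $(L,g)$. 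All of these are continuous in $(p,t)$ on the compact set $L_0\times[0,t_0]$, so they are uniformly bounded.

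I also need the flow to stay where those computations were valid. Preservation holds for as long as the flow remains among totally real spacelike immersions, which is an open condition. Let $t^{*}$ be the supremum of times $s\le t_0$ such that $L(t)$ is totally real and spacelike with the bounds uniform on $[0,s]$. On $[0,t^*)$ the Gronwall step applies. Multiplying by $e^{-Ct}$ gives $(e^{-Ct}E)'\le 0$, and since $E(0)=0$ while $E\ge 0$, we get $E\equiv 0$.

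Because the integrand is continuous and nonnegative, $\omega|_{L}\equiv 0$ pointwise, so $L(t)$ is Lagrangian on $[0,t^*)$. By continuity it is also Lagrangian at $t^*$. Lagrangian spacelike immersions are in particular totally real, so openness in time extends the interval beyond $t^{*}$ unless $t^{*}=t_0$. This shows $\omega|_{L}\equiv0$ on all of $[0,t_0]$.

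The main obstacle is the justification of uniform bounds for $G,G'$ and the curvature term. It also has to be confirmed that the integration by parts used earlier produces no boundary terms. That holds because $L_0$ is closed (compact without boundary), or alternatively because the computation is localized with compactly supported data. I would also check that $E(t)$ is genuinely differentiable, which follows from smoothness of the flow in $t$.
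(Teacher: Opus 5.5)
Your proof is correct and follows the paper's argument: the paper also just applies Gronwall to $\int\|\omega|_L\|_g^2\,dV_g$ through $\frac{d}{dt}\bigl(e^{-C_4t}\int\|\omega|_L\|_g^2\,dV_g\bigr)\le 0$ together with vanishing at $t=0$. Your bootstrap on the totally real condition is harmless but unnecessary: since $h(KV,KV)=-h(V,V)$, a spacelike tangent plane can never contain both some $V\neq 0$ and $KV$, so a flow that stays spacelike is automatically totally real.
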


\begin{proof}
This follows from applying a Gronwall argument: \begin{equation*}
\frac{d}{dt} \left(e^{-C_4 t}\int \left\Vert \omega |_{L}\right\Vert _{g}^{2}dV_{g}(t) \right)\leq
0.
\end{equation*} 
\end{proof}

\subsection{
Differential of the Mean Curvature Form in the Totally Real Setting}\label{pf}

In this subsection, we prove Proposition \ref{evl4}.
\begin{lemma}
The mean curvature form is related to the codifferential (on the
submanifold)\ via\
\[
\alpha_{\vec{H}}=-d|_{L}^{\ast}\omega+\beta
\]
where
\[
\beta(F_{k}) = g^{ij}\,\omega\!\left(F_{i},\,A\!\left(F_{j},F_{k}\right)\right).
\]

\end{lemma}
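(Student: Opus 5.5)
The plan is to obtain both sides from a single computation of the intrinsic covariant derivative of $\omega|_L$, using only two facts about the ambient geometry. First, $h$ is parallel and $K$ is parallel by \cite[Claim 2.2]{W2025}, so $\omega=h(K\cdot,\cdot)$ is parallel for $\hat\nabla$. Second, for tangent fields we have the Gauss splitting $\hat\nabla_XY=\nabla_XY+A(X,Y)$ with $A(X,Y)=(\hat\nabla_XY)^\perp$ symmetric. The identity is tensorial in $F_k$, so it suffices to verify it at a point $p$ against the coordinate frame $\{F_i\}$. It is convenient to use the type II coordinates of Corollary \ref{IIcoords}, in which $g_{ij}(p)=\delta_{ij}$ and the Christoffel symbols of $g$ vanish at $p$. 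No Lagrangian assumption is used, only total reality, so that $\omega|_L$, $g$ and $A$ are defined.

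\textbf{Step 1 (derivative of the restricted form).} For tangent $X,Y,Z$, differentiate $\omega(Y,Z)$ along $X$ in two ways: ambiently, using $\hat\nabla\omega=0$, and intrinsically. Subtracting gives
\[
(\nabla_X\omega|_L)(Y,Z)=\omega\big(A(X,Y),Z\big)+\omega\big(Y,A(X,Z)\big).
\]
\textbf{Step 2 (trace).} Set $X=F_i$, $Y=F_j$, $Z=F_k$ and contract with $g^{ij}$. The first term becomes $\omega(g^{ij}A(F_i,F_j),F_k)$. The second becomes $g^{ij}\omega(F_j,A(F_i,F_k))$, which by symmetry of $g^{ij}$ and of $A$ is exactly $\beta(F_k)$ as defined in the statement.

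\textbf{Step 3 (identify the trace of $A$ with $\vec H$ and fix $d^\ast$).} The first term must be identified with $\alpha_{\vec H}(F_k)$ up to a sign $\epsilon$, and the trace with $d|_L^\ast\omega$ up to a sign $\sigma$. For $\vec H$, I will use the paper's expression $\vec H=g^{ij}a_{ijk}(-g^{kl})KF_l$, with $a_{ijk}=h(\hat\nabla_{F_i}F_j,KF_k)$, rather than an abstract normalization. In the totally real case the normal basis is $\tilde KF_l$ from \eqref{define Ktwiddle}, with negative definite metric $\eta_{ij}$. I will expand $\omega(\vec H,F_k)=h(K\vec H,F_k)$ and compare it directly with $\omega(g^{ij}A_{ij},F_k)$; this fixes $\epsilon$ with $\epsilon\,\alpha_{\vec H}=\omega(g^{ij}A_{ij},\cdot)$. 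For the codifferential, $d|_L^\ast\omega(F_k)=\sigma\,g^{ij}(\nabla_{F_i}\omega)(F_j,F_k)$. Step 2 then reads
\[
\sigma\, d|_L^\ast\omega=\epsilon\,\alpha_{\vec H}+\beta .
\]

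\textbf{Main obstacle: the signs.} This is where I expect the difficulty, and the outcome is not automatic. With the naive choices $\epsilon=1$ (that is, $\vec H=\operatorname{tr}A$) and $\sigma=-1$, the displayed identity gives $\alpha_{\vec H}=-d|_L^\ast\omega-\beta$, which is \emph{not} the stated identity. The stated $\alpha_{\vec H}=-d|_L^\ast\omega+\beta$ follows from the display exactly when $\epsilon=\sigma=-1$, or equivalently when $\epsilon\sigma=1$ together with the required sign relation between $\alpha_{\vec H}$ and $\beta$. For example, it holds if the first contraction term equals $-\alpha_{\vec H}$ because of the $(-g^{kl})$ factor and the para-Kähler sign $h(KX,Y)=-h(X,KY)$, and $d^\ast$ is taken as $+g^{ij}(\nabla_i\omega)_{jk}$.

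I will first try to establish $\epsilon=-1$ honestly from the paper's definitions, tracking three things:
\begin{itemize}
\item the sign of $h(K\tilde KF_l,F_k)$,
\item the correction term $-\omega_l^mF_m$ in $\tilde K$, which contributes only terms quadratic in $\omega$ against $A$ and should be absorbed into $\beta$ or checked to cancel,
\item the sign of the normal metric.
\end{itemize}
Only then would I pin down $\sigma$ to match.

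If the computation instead forces $\epsilon=1$ with the standard $\sigma=-1$, then the stated formula does not follow from these conventions. In that case I would report that the identity holds with $-\beta$, and I would not claim the stated $+\beta$.
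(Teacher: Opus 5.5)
Your Steps 1 and 2 are exactly the paper's argument. You compare the ambient and intrinsic derivatives of $\omega(F_j,\cdot)$ and use $\hat\nabla\omega=0$ to get $(\nabla_X\omega|_L)(Y,Z)=\omega(A(X,Y),Z)+\omega(Y,A(X,Z))$, then trace. The sign discrepancy you flag is genuine, and it sits in the statement, not in your computation. The paper uses $d|_L^\ast\omega=-g^{ij}(\nabla_{F_i}\omega)(F_j,\cdot)$ and $\vec H=g^{ij}A(F_i,F_j)$, and its own proof ends with $d|_L^\ast\omega(F_k)=-\omega(\vec H,F_k)-g^{ij}\omega(F_j,A(F_i,F_k))$. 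That is $\alpha_{\vec H}=-d|_L^\ast\omega-\beta$, which the paper then labels ``as desired.'' Your fallback branch is the correct outcome, and you should commit to it: the identity holds with $-\beta$.

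Trying to establish $\epsilon=-1$ cannot succeed, for two reasons.

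\textbf{The definitions fix $\epsilon=1$.} Since $\alpha_{\vec H}=\omega(\vec H,\cdot)$ and $\vec H=\operatorname{tr}_gA$, the trace term $\omega(g^{ij}A(F_i,F_j),\cdot)$ is $\alpha_{\vec H}$ identically. The paper's formula $\vec H=g^{ij}a_{ijk}(-g^{kl})KF_l$ is just $\operatorname{tr}_gA$ written in the normal frame $\{KF_l\}$ of a Lagrangian, using $h(KF_k,KF_l)=-g_{kl}$. Off the Lagrangian locus $KF_l$ is not normal, so that formula is not the mean curvature there and you should not expand with it. There is also nothing to ``absorb'': Steps 1--2 give an exact identity and $\beta$ is a fixed form, so the $\tilde K$ correction in \eqref{define Ktwiddle} plays no role.

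\textbf{A convention-independent check rules out $+\beta$.} On a Lagrangian, $\omega|_L\equiv 0$, so $d|_L^\ast\omega=0$ whichever sign you give $d^\ast$. Full symmetry of $a_{ijk}=h(\hat\nabla_{F_i}F_j,KF_k)$ gives $\beta(F_k)=g^{ij}a_{jki}=g^{ij}a_{ijk}=-\alpha_{\vec H}(F_k)$. The stated identity would then read $\alpha_{\vec H}=-\alpha_{\vec H}$, forcing every spacelike Lagrangian to be minimal. The $-\beta$ version holds trivially there.

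Your sign bookkeeping also has a slip. From $\sigma\, d|_L^\ast\omega=\epsilon\,\alpha_{\vec H}+\beta$ you get $\alpha_{\vec H}=\epsilon\sigma\, d|_L^\ast\omega-\epsilon\beta$. The stated form would therefore need $\epsilon=-1$ and $\sigma=+1$, not $\epsilon=\sigma=-1$. The pair $\epsilon=-1$, $\sigma=+1$ is in fact what your ``for example'' uses.

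Finally, the paper feeds the $+\beta$ version into Proposition \ref{evl4}, where $d\beta$ is cancelled against $d\alpha_{n(\nabla\psi)^\perp}$. The corrected sign has to be tracked through that cancellation as well.
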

\begin{proof}
Using
\[
d|_{L}^{\ast}\omega(\cdot) = -\,g^{ij}\,\nabla_{F_{i}}\omega(F_{j},\cdot)
\]
we compute on both the ambient manifold and the submanifold:
\[
\hat{\nabla}_{F_{i}}\omega(F_{j},\cdot)
= F_{i}\omega(F_{j},\cdot)
  - \omega\!\left(\hat{\nabla}_{F_{i}}F_{j},\,\cdot\right)
  - \omega\!\left(F_{j},\,\hat{\nabla}_{F_{i}}(\cdot)\right),
\]
\[
\nabla_{F_{i}}\omega(F_{j},\cdot)
= F_{i}\omega(F_{j},\cdot)
  - \omega\!\left(\nabla_{F_{i}}F_{j},\,\cdot\right)
  - \omega\!\left(F_{j},\,\nabla_{F_{i}}(\cdot)\right).
\]

Taking the difference and using that 
\(\hat{\nabla}_{F_{i}}\omega(F_{j},\cdot)=0\) (since \(\omega\) is parallel), we get
\begin{align*}
\nabla_{F_{i}}\omega(F_{j},\cdot)
&= \omega\!\left(\hat{\nabla}_{F_{i}}F_{j} - \nabla_{F_{i}}F_{j},\,\cdot\right)
 + \omega\!\left(F_{j},\left(\hat{\nabla}_{F_{i}} - \nabla_{F_{i}}\right)(\cdot)\right) \\
&= \omega\!\left(A(F_{i},F_{j}),\,\cdot\right)
  + \omega\!\left(F_{j},\,A(F_{i},\cdot)\right).
\end{align*}

Tracing in \(i,j\) gives
\begin{align*}
d|_{L}^{\ast}\omega(F_{k})
&= -\,g^{ij}\,\omega\!\left(A(F_{i},F_{j}),\,F_{k}\right)
   - g^{ij}\,\omega\!\left(F_{j},\,A(F_{i},F_{k})\right) \\
&= -\,\omega(\vec{H},F_{k})
   - g^{ij}\,\omega\!\left(F_{j},\,A(F_{i},F_{k})\right),
\end{align*}
as desired.
\end{proof}


\begin{lemma} Suppose that $L$ evolves by mean curvature flow and is totally real. For any fixed $t \in [0,t_0]$, \[
d\beta=\hat{R}ic(\cdot,K\cdot)+\omega|_{L}\ast G
\]
where $\omega|_L \ast G$ denotes a contraction of terms, each containing at least one factor of $\omega|_L$, while all remaining factors are bounded by constants depending only on the given flow on $[0,t_0]$.
\end{lemma}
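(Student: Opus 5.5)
We will compute $d\beta$ pointwise at a fixed time and point $p\in L$. All terms containing a factor of $\omega|_L$ or of $\nabla\omega|_L$ will be absorbed into the error (here $\nabla\omega|_L$ will be treated in the same way as the $G'$ terms of Proposition \ref{evl4}). The model is the classical Calabi--Yau/Kähler computation of Dazord and Smoczyk, in which $d\alpha_{\vec H}=\mathrm{Ric}(\cdot,J\cdot)$ on a Lagrangian. Here we run it with $K$ in place of $J$, using that $K$ and $\omega$ are parallel for $h$ \cite[Claim 2.2]{W2025}.

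\textbf{Rewriting $\beta$.} First rewrite $\beta$ using the totally real frame. By \eqref{define Ktwiddle}, $KF_l=\tilde K F_l+\omega_l^mF_m$. For any normal vector $\nu$,
\[
\omega(F_i,\nu)=h(KF_i,\nu)=h(\tilde KF_i,\nu),
\]
since $\omega_i^mF_m$ is tangent. Hence
\[
\beta(F_k)=g^{ij}h\big(\tilde KF_i,A(F_j,F_k)\big)=g^{ij}h\big(KF_i,\hat\nabla_{F_j}F_k\big).
\]
Define $a_{ijk}:=h(\hat\nabla_{F_i}F_j,KF_k)$. This tensor is symmetric in $i,j$. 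Parallelism of $K$ gives
\[
a_{ijk}-a_{ikj}=F_i\,\omega(F_j,F_k)\ \text{(up to sign)},
\]
which is $O(\nabla\omega|_L)+O(\omega|_L)$ once we pass to normal coordinates for $g$ at $p$. Therefore $a$ is totally symmetric modulo the error terms, and $\beta_k=g^{ij}a_{jki}$, up to sign conventions.

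\textbf{Computing $d\beta$.} Work in normal coordinates for $g$ at $p$, so that $\partial g(p)=0$. Then
\[
(d\beta)_{lk}=\partial_l\beta_k-\partial_k\beta_l=\sum_i\big(F_l\,h(KF_i,\hat\nabla_{F_k}F_i)-F_k\,h(KF_i,\hat\nabla_{F_l}F_i)\big).
\]
Differentiate using $\hat\nabla K=0$. Each term splits into a piece
\[
h\big(KF_i,\hat\nabla_{F_l}\hat\nabla_{F_k}F_i-\hat\nabla_{F_k}\hat\nabla_{F_l}F_i\big)=\hat R(F_l,F_k,F_i,KF_i)
\]
(with $[F_l,F_k]=0$) and a piece
\[
h\big(K\hat\nabla_{F_l}F_i,\hat\nabla_{F_k}F_i\big)-(l\leftrightarrow k).
\]
For the second piece, decompose $\hat\nabla F=\nabla F+A$. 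The tangent parts vanish at $p$. The normal parts give
\[
h\big(KA(F_l,F_i),A(F_k,F_i)\big)-(l\leftrightarrow k).
\]
Write $A(F_l,F_i)=-a_{lim}g^{mr}\tilde KF_r$, and use $K\tilde KF_r=F_r-\omega_r^sKF_s$. The first term then reduces to $a_{lim}a_{kir}$-type contractions plus $\omega*G$. This contraction is symmetric in $l,k$ once $a$ is symmetric, so it cancels in the antisymmetrization up to $\omega*G$ and $\nabla\omega*G$.

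\textbf{Curvature term.} What remains is
\[
\sum_i\hat R(F_l,F_k,F_i,KF_i).
\]
Apply the first Bianchi identity together with the $K$-invariance of $\hat R$ (from $\hat\nabla K=0$, so that $\hat R(X,Y,KZ,KW)=-\hat R(X,Y,Z,W)$). This converts the sum into
\[
\sum_i\hat R(F_l,KF_k,F_i,F_i)\ \text{type terms, i.e. } \hat{R}ic(F_l,KF_k),
\]
after completing the trace over $\{F_i\}\cup\{\tilde KF_i\}$, the ambient frame of signature $(n,n)$. The normal half of the trace is computed with $\eta_{ij}$, which differs from $-g_{ij}$ only by $\omega*\omega$ terms. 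The identification therefore holds modulo $\omega|_L*G$. All coefficients, namely the second fundamental form, $g^{-1}$, $\eta^{-1}$, and $\hat R$, are bounded on the compact flow over $[0,t_0]$. This gives the boundedness of $G$.

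\textbf{Main obstacle.} The step we expect to be hardest is the bookkeeping that turns the frame-trace $\sum_i\hat R(F_l,F_k,F_i,KF_i)$ into the ambient $\hat{R}ic(\cdot,K\cdot)$ in the para-Kähler, indefinite setting, where the signs differ from the Kähler case because $K^2=I$. Alongside this, we must check that every non-Lagrangian correction carries an $\omega|_L$ or $\nabla\omega|_L$ factor. If the $\nabla\omega$ terms do not fit the stated form $\omega*G$, we will record them as $\nabla\omega*G'$, which is harmless for Proposition \ref{evl4}.
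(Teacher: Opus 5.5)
Your skeleton matches the paper's: normal coordinates for $g$ at $p$, the product rule on $\beta$, a curvature piece $\hat R(F_l,F_k,F_i,KF_i)$, and everything else carrying a factor of $\omega|_L$. One step, however, is justified incorrectly. The quadratic piece $h(K\hat\nabla_{F_l}F_i,\hat\nabla_{F_k}F_i)-(l\leftrightarrow k)$ cannot cancel by symmetry of $a$. Because $h(K\cdot,\cdot)=\omega$ is skew, this piece is already antisymmetric in $(l,k)$, so antisymmetrizing doubles it to $2\sum_i\omega(A(F_l,F_i),A(F_k,F_i))$. Any contraction $a_{lim}a_{kis}W^{ms}$ it produces has $W$ skew, and is therefore antisymmetric in $(l,k)$ whether or not $a$ is symmetric.

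The term is small for a different reason, which is the computation you started but did not finish. You have $K\tilde KF_r=F_r-\omega_r^sKF_s$ and $h(F_r,A(F_k,F_i))=0$, so
\[
\omega(\tilde KF_r,\tilde KF_s)=-\omega_r^m\eta_{ms}.
\]
That is, the normal bundle is $\omega$-isotropic up to $\omega|_L$, and there is no genuine $a\ast a$ term at all. This is exactly how the paper disposes of the first term of $F_k\beta(F_l)$. Once this is fixed, no $\nabla\omega$ terms appear anywhere. Your rewriting $\beta_k=g^{ij}h(KF_i,\hat\nabla_{F_j}F_k)$ is off by $g^{ij}\Gamma^m_{jk}\omega_{im}$, but its derivative at $p$ is $\omega\ast\partial\Gamma$, the same kind of term as the paper's $(-F_k\Gamma^m_{jl}+F_l\Gamma^m_{jk})\omega_{im}$. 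So you get the lemma in its stated form, and you need neither the near-symmetry of $a$ nor the $\nabla\omega\ast G'$ fallback.

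Where you genuinely depart from the paper is the identification of the curvature trace. Your displayed $\sum_i\hat R(F_l,KF_k,F_i,F_i)$ vanishes identically, so presumably you mean the following. The first Bianchi identity together with $\hat R(X,Y,KZ,W)=-\hat R(X,Y,Z,KW)$ gives the exact identity
\[
\sum_i\hat R(F_l,F_k,F_i,KF_i)=\sum_i\big(\hat R(F_i,F_l,KF_k,F_i)-\hat R(KF_i,F_l,KF_k,KF_i)\big).
\]
The right side is $\hat{R}ic(F_l,KF_k)$ up to the error from inverting the Gram matrix of the frame $\{F_i\}\cup\{KF_i\}$. That matrix has diagonal blocks $\pm\delta_{ij}$ and off-diagonal blocks $\pm\omega_{ij}$, so the error is of the form $\omega|_L\ast G$.

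The paper instead evaluates both sides in special coordinates of type II using Lemma \ref{KM41}, obtaining $\sum_i(c_{k\bar l\bar\imath i}-c_{l\bar k\bar\imath i})$ for each. Your argument is shorter and holds on any para-K\"ahler manifold. The paper's is tied to the Kim--McCann structure, but it produces the explicit cost expression that the next claim matches against $d\alpha_{n(\hat\nabla\psi)^\perp}$. With your version you would still need that coordinate formula for $\hat{R}ic(\cdot,K\cdot)$ to complete Proposition \ref{evl4}.
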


\begin{proof}
We take normal coordinates for the induced metric at the point and time-slice. We compute
\begin{eqnarray*}
\beta(F_{k})
&=& g^{ij}\,\omega\!\left(F_{i},\,A(F_{j},F_{k})\right) \\[4pt]
&=& g^{ij}\,h\!\left(KF_{i},\,A(F_{j},F_{k})\right) \\[4pt]
&=& g^{ij}\,h\!\left(\tilde{K}F_{i},\,A(F_{j},F_{k})\right) \\[4pt]
&=& g^{ij}\,h\!\left(KF_{i},\,A(F_{j},F_{k})\right)
   \;-\;
   g^{ij}\,\omega_{i}^{p}\,
   h\!\left(F_{p},\,A(F_{j},F_{k})\right).
\end{eqnarray*}
The third line follows since the second fundamental form is normal. The fourth line follows from the expression for $\tilde{K}$ given in (\ref{define Ktwiddle}).
 Notice the last term pairs tangential with normal, so vanishes.  Now
 \begin{eqnarray*}
F_{k}\beta(F_{l})
&=& g^{ij}\,F_{k}\,h\!\left(KF_{i},\,A(F_{j},F_{l})\right) \\[6pt]
&=& g^{ij}\,h\!\left(K\,\bar{\nabla}_{F_{k}}F_{i},\,A(F_{j},F_{l})\right)
   \;+\;
   g^{ij}\,h\!\left(KF_{i},\,\bar{\nabla}_{F_{k}}A(F_{j},F_{l})\right).
\end{eqnarray*}
To be clear, we are differentiating $A(F_{j},F_{k})$ as an ambient vector field, rather than as a tensor or as a section of the normal bundle.

If we are assuming normal coordinates for the induced metric at a point, the vector 
$\bar{\nabla}_{F_{k}}F_{i}$ is the second fundamental form, and is normal.
\ That is (at the point)\  
\[
\bar{\nabla}_{F_{k}}F_{i}=A(F_{k},F_{i})\cdot \tilde{K}F_{m}\eta ^{ml}\tilde{%
K}F_{l}=a_{kim}\eta ^{ml}\tilde{K}F_{l}
\]%
and the first term becomes \begin{eqnarray*}
g^{ij}\, h\!\left( K \bar{\nabla}_{F_{k}} F_{i},\, A(F_{j}, F_{l}) \right)
&=&
a_{kim}\, \eta^{ml}\, g^{ij}\,
h\!\left( K \tilde{K} F_{l},\, A(F_{j}, F_{l}) \right)
\\[6pt]
&=&
a_{kim}\, \eta^{ml}\, g^{ij}\,
h\!\left( K\big(KF_{l} - \omega_{l}^{p} F_{p}\big),\, A(F_{j},F_{l}) \right)
\\[6pt]
&=&
a_{kim}\, \eta^{ml}\, g^{ij}\,
\Big[
\, h\!\left( KF_{l},\, A(F_{j}, F_{l}) \right)
\;-\;
\omega_{l}^{p}\, h\!\left( K F_{p},\, A(F_{j}, F_{l}) \right)
\Big].
\end{eqnarray*}
The first term of this expression is normal paired with tangential, so it
vanishes, and the second term is of the form $\omega \ast G.$
Computing
\[
F_{k}\beta (F_{l})-F_{l}\beta (F_{k})=\omega \ast G+g^{ij}h\left( KF_{i},%
\bar{\nabla}_{F_{k}}A\left( F_{j},F_{l}\right) -\bar{\nabla}%
_{F_{l}}A(F_{j},F_{k})\right) .
\]%
Next we get
\begin{eqnarray*}
\bar{\nabla}_{F_{k}}A\left( F_{j},F_{l}\right) -\bar{\nabla}_{F_{l}}A(F_{j},F_{k})
&=&
\bar{\nabla}_{F_{k}}\left( \bar{\nabla}_{F_{j}}F_{l}-\nabla _{F_{j}}F_{l}\right)
-\bar{\nabla}_{F_{l}}\left( \bar{\nabla}_{F_{j}}F_{k}-\nabla _{F_{j}}F_{k}\right)
\\
&=&
\bar{\nabla}_{F_{k}}\bar{\nabla}_{F_{j}}F_{l}
-\bar{\nabla}_{F_{l}}\bar{\nabla}_{F_{j}}F_{k}
-\bar{\nabla}_{F_{k}}\left( \Gamma _{jl}^{m}F_{m}\right)
+\bar{\nabla}_{F_{l}}\left( \Gamma _{jk}^{m}F_{m}\right)
\\
&=&
\bar{\nabla}_{F_{k}}\bar{\nabla}_{F_{j}}F_{l}
-\bar{\nabla}_{F_{l}}\bar{\nabla}_{F_{j}}F_{k}
-\;F_{k}\Gamma _{jl}^{m}F_{m}
-\Gamma _{jl}^{m}A(F_{k},F_{m})
\\
&&\qquad
+\;F_{l}\Gamma _{jk}^{m}F_{m}
+\Gamma _{jk}^{m}A(F_{l},F_{m})
\\
&=&
\bar{\nabla}_{F_{k}}\bar{\nabla}_{F_{j}}F_{l}
-\bar{\nabla}_{F_{l}}\bar{\nabla}_{F_{j}}F_{k}
-\;F_{k}\Gamma _{jl}^{m}F_{m}
+\;F_{l}\Gamma _{jk}^{m}F_{m}
\end{eqnarray*}
again using that Christoffel symbols vanish at the point. \ 

So 
\begin{eqnarray*}
&&h\left( KF_{i},\bar{\nabla}_{F_{k}}A\left( F_{j},F_{l}\right) -\bar{\nabla}%
_{F_{l}}A(F_{j},F_{k})\right)  \\
&=&h(\bar{\nabla}_{F_{k}}\bar{\nabla}_{F_{j}}F_{l}-\bar{\nabla}_{F_{l}}\bar{%
\nabla}_{F_{j}}F_{k},KF_{i})+h(-F_{k}\Gamma _{jl}^{m}F_{m}+F_{l}\Gamma
_{jk}^{m}F_{m},KF_{i}) \\
&=&h(\bar{\nabla}_{F_{k}}\bar{\nabla}_{F_{j}}F_{l}-\bar{\nabla}_{F_{l}}\bar{%
\nabla}_{F_{j}}F_{k},KF_{i})+(-F_{k}\Gamma _{jl}^{m}+F_{l}\Gamma
_{jk}^{m})\omega _{im}.
\end{eqnarray*}

The second term is $\ast \omega $, so we focus on the first. Using torsion
free and vanishing of Lie brackets, we get
\begin{eqnarray*}
h(\bar{\nabla}_{F_{k}}\bar{\nabla}_{F_{j}}F_{l}-\bar{\nabla}_{F_{l}}\bar{%
\nabla}_{F_{j}}F_{k},KF_{i}) &=&h(\bar{\nabla}_{F_{k}}\bar{\nabla}%
_{F_{l}}F_{j}-\bar{\nabla}_{F_{l}}\bar{\nabla}_{F_{k}}F_{j}-\bar{\nabla}%
_{[F_{k},F_{l}]}F_{j},KF_{i}) \\
&=&\hat{R}(F_{k},F_{l},F_{j},KF_{i}) .
\end{eqnarray*}%
So
\[
d\beta (F_{k},F_{l})=g^{ij}\tilde{R}(F_{k},F_{l},F_{j},KF_{i})+ \omega *G.
\]

The proof of the Lemma is complete modulo the following claim: 
\end{proof}

\begin{claim}
\begin{eqnarray*}
g^{ij}\hat{R}(F_{k},F_{l},F_{j},KF_{i}) &=&\hat{R}ic(F_{k},KF_{l})+%
\omega \ast  \\
&=&\sum_{i}\left( C(k,\bar{l},\bar{\imath},i)-C(l,\bar{k},\bar{\imath}%
,i)\right) +\omega \ast  G
\end{eqnarray*}%
where the notation
\begin{equation*}
C(k,\bar{l},\bar{\imath},i)=c_{k\bar{l}\bar{\imath}i}
\end{equation*}%
is computed at the point in special coordinates (recalling Lemma \ref{KM41} and Corollary \ref{IIcoords}), is given the symmetries of the curvature tensor, and represented like this to be lighter on the
eyes. \
\end{claim}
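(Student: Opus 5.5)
The plan is to prove the first equality in a coordinate-free way, using that $K$ is parallel, and then to obtain the second line by a direct expansion in special coordinates of type II. All error terms are absorbed into $\omega\ast G$, which is legitimate because on a fixed flow and time interval every curvature and metric quantity involved is bounded.

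\emph{Step 1: an adapted frame.} At the point, in normal coordinates for $g$, the vectors $\{F_i\}$ are $h$-orthonormal. Since $h(KX,KY)=-h(K^2X,Y)=-h(X,Y)$, the vectors $\{KF_i\}$ satisfy $h(KF_i,KF_j)=-\delta_{ij}$, and $h(F_i,KF_j)=\omega_{ji}$. Hence $\{F_i,KF_i\}$ is an ambient frame that is pseudo-orthonormal up to $\omega$-errors. Its dual frame, used to take ambient traces, differs from the signed frame $\{F_i,-KF_i\}$ by terms linear in $\omega|_L$. Alternatively one can use $\{F_i\}$ together with $\{\tilde K F_i\}$ and the metric $\eta_{ij}=-\delta_{ij}+\omega\ast\omega$, together with (\ref{define Ktwiddle}).

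\emph{Step 2: the Ricci-form identity.} Since $\hat\nabla K=0$ (\cite[Claim 2.2]{W2025}), we have $\hat R(X,Y,KZ,KW)=-\hat R(X,Y,Z,W)$, i.e.\ $\hat R(X,Y,KZ,W)=-\hat R(X,Y,Z,KW)$. The first Bianchi identity then gives the standard Ricci-form identity
\[
\hat Ric(X,KY)=\tfrac12\sum_A \varepsilon_A\,\hat R(X,Y,e_A,Ke_A)
\]
up to sign convention, for any pseudo-orthonormal frame $e_A$ with signs $\varepsilon_A=h(e_A,e_A)$. Take $e_A\in\{F_i,KF_i\}$. Since $\hat R(X,Y,KF_i,K^2F_i)=-\hat R(X,Y,F_i,KF_i)$ and $\varepsilon=-1$ on the $KF_i$, both halves of the frame contribute the same amount. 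The sum therefore equals $\sum_i \hat R(F_k,F_l,F_i,KF_i)+\omega\ast G$, which is the first equality after identifying $g^{ij}=\delta^{ij}$. The precise sign bookkeeping (orientation of $\hat R$, and the order of $F_j$, $KF_i$) is where errors are most likely, so I would fix the convention against Lemma \ref{KM41} before committing.

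\emph{Step 3: the coordinate form.} Use Corollary \ref{IIcoords}: $c_{i\bar s}=-\delta_{i\bar s}$, $T_i^{\bar j}=\delta_{ij}-\omega_{ij}$, and $\hat R_{i\bar s\bar p j}=\tfrac12 c_{i\bar s\bar p j}$. This gives
\[
F_i=E_i+E_{\bar\imath}+\omega\ast,\qquad KF_i=E_i-E_{\bar\imath}+\omega\ast .
\]
Expanding $\sum_i \hat R(E_k+E_{\bar k},E_l+E_{\bar l},E_i+E_{\bar\imath},E_i-E_{\bar\imath})$ multilinearly, Lemma \ref{KM41} kills every component that does not have exactly two barred indices, as well as the pure-type components $R_{ij\bar k\bar l}$. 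Pair symmetry and antisymmetry reduce the survivors to components of the form $R_{k\bar l\bar\imath i}$ and $R_{l\bar k\bar\imath i}$. Each of these is $\tfrac12 c_{\cdot\cdot\cdot\cdot}$, and the factors of $2$ and the signs should cancel to give $\sum_i\big(C(k,\bar l,\bar\imath,i)-C(l,\bar k,\bar\imath,i)\big)$. The antisymmetry in $k,l$ is a useful check here, since $d\beta$ is a 2-form.

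\emph{Main obstacle.} The hard part is not conceptual but the consistent tracking of signs and factors of $\tfrac12$ across three places: the metric $h=\tfrac12(-c_{i\bar s})$, the curvature normalization in (\ref{eq:Rijkl}), and the trace over the mixed-signature frame. I would first carry out Step 3 directly, as a sanity check on Step 2, and only then rely on the invariant argument for the first line.
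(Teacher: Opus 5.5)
Your proposal is correct. Step 3 is essentially the paper's first computation: expand $\sum_i\hat R(E_k+E_{\bar k},E_l+E_{\bar l},E_i+E_{\bar\imath},E_i-E_{\bar\imath})$ in type II coordinates, using $\hat R_{kl\bar s\bar p}=0$.

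You differ from the paper in how you obtain the first equality.

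\emph{The paper's route.} It uses no invariant identity. Instead it computes $\hat{R}ic(F_k,KF_l)$ a second time in coordinates:
\begin{itemize}
\item it traces over the frame $\{F_i\}\cup\{\tilde K F_i\}$ with the metric $\eta_{ij}$;
\item it replaces $\tilde K$ by $K$ and $\eta^{ij}$ by $-\delta^{ij}$, at the cost of $\omega\ast G$;
\item it expands to $\sum_i C(i,\bar k,l,\bar\imath)-C(\bar\imath,k,\bar l,i)$;
\item it checks that this agrees with the first expression.
\end{itemize}
In terms of fourth derivatives of $c$, that final check reduces to the symmetry of mixed partials. This is the coordinate form of the first Bianchi identity you invoke.

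\emph{Your route.} You derive the para-K\"ahler Ricci-form identity from $\hat\nabla K=0$, pair symmetry and the first Bianchi identity. This shows the first equality is a general fact of para-K\"ahler geometry, independent of the Kim--McCann structure and of special coordinates. It also saves one coordinate expansion. The paper's route is more pedestrian, but it stays inside the coordinate formalism and gives both sides explicitly as fourth derivatives of $c$. That is the form used in the next claim on $d\alpha_{n\nabla\psi^{\perp}}$.

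\emph{Your sign hedge resolves in your favour.} The paper traces Ricci as $\hat{R}ic(Y,Z)=\sum_A\varepsilon_A\hat R(e_A,Y,Z,e_A)$. With that convention the identity reads $\hat{R}ic(X,KY)=\tfrac12\sum_A\varepsilon_A\hat R(X,Y,e_A,Ke_A)$, with a plus sign. The overall sign convention for $\hat R$ does not matter: the identity is linear in $\hat R$ and uses only symmetries shared by $\pm\hat R$. Taking $e_A\in\{F_i,KF_i\}$ then gives exactly $\sum_i\hat R(F_k,F_l,F_i,KF_i)$, which is the left-hand side. In the totally real case the inverse Gram matrix of this frame is $\mathrm{diag}(\delta,-\delta)+\omega\ast G$, as you note. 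This matches the paper's $\delta^{ij}+\eta^{ij}=\omega\ast G$.
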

\begin{proof}

Choose an induced orthonormal basis (at the point) for the submanifold
which is described at a point via 
\begin{equation*}
F_{l}=E_{l}+T_{l}^{\bar{s}}E_{\bar{s}}.
\end{equation*}%
In this case, assuming special coordinates type II, we get (\ref{IIcoords})

\begin{eqnarray*}
g^{ij}\hat{R}(F_{k},F_{l},F_{j},KF_{i})
&=& \sum_{i} \hat{R}\!\left(
E_{k}+T_{k}^{\bar{s}}E_{\bar{s}},\,
E_{l}+T_{l}^{\bar{p}}E_{\bar{p}},\
E_{i}+T_{i}^{\bar{r}}E_{\bar{r}},\,
E_{i}-T_{i}^{\bar{q}}E_{\bar{q}}
\right)
\\
&=& \sum_{i} \hat{R}\!\left(
E_{k}+E_{\bar{k}},\,
E_{l}+E_{\bar{l}},\,
E_{i}+E_{\bar{\imath}},\,
E_{i}-E_{\bar{\imath}}
\right)
+ \omega \ast G.
\end{eqnarray*}
Using  
\begin{equation*}
\hat{R}_{kl\bar{s}\bar{p}}=0
\end{equation*}

we expand 

\begin{eqnarray*}
&=& \sum_{i} \hat{R}\!\left(
E_{k}+E_{\bar{k}},\,
E_{l}+E_{\bar{l}},\,
E_{i}+E_{\bar{\imath}},\,
E_{i}-E_{\bar{\imath}}
\right)
\\
&=& \sum_{i}\left(
\frac{1}{2}C(k,\bar{l},\bar{\imath},i)
-\frac{1}{2}C(k,\bar{l},i,\bar{\imath})
+\frac{1}{2}C(\bar{k},l,\bar{\imath},i)
-\frac{1}{2}C(\bar{k},l,i,\bar{\imath})
\right)
\\
&=& \sum_{i}\left(
C(k,\bar{l},\bar{\imath},i)
-
C(l,\bar{k},\bar{\imath},i)
\right).
\end{eqnarray*}

On the other hand, we can compute Ricci directly: Note that $\left\{
F_{i}\right\} $ is an orthonormal basis for the tangent space. $\ $The set
of tangent vectors $\left\{ \tilde{K}F_{i}\right\} $ is normal to $F_{i}$
but not necessarily orthonormal. \ We can write the trace as follows 
\begin{align*}
\hat{R}ic(F_{k},KF_{l})
&= g^{ii}\hat{R}(F_{i},F_{k},KF_{l},F_{i})
  + \eta^{ij}\hat{R}(\tilde{K}F_{i},F_{k},KF_{l},\tilde{K}F_{j}) \\
&= \sum_{i}\hat{R}(F_{i},F_{k},KF_{l},F_{i})
   - \delta^{ij}\hat{R}(\tilde{K}F_{i},F_{k},KF_{l},\tilde{K}F_{j})
   + \left(\delta^{ij}+\eta^{ij}\right)
     \hat{R}(\tilde{K}F_{i},F_{k},KF_{l},\tilde{K}F_{j}) \\
&= \sum_{i}\hat{R}(F_{i},F_{k},KF_{l},F_{i})
   - \delta^{ij}\hat{R}(KF_{i},F_{k},KF_{l},KF_{j})
   + \omega \ast
\end{align*}
using the fact that 
\begin{equation*}
K-\tilde{K}=\omega \ast G
\end{equation*}
and 
\begin{equation*}
\left( \delta^{ij}+\eta^{ij}\right) = \omega{\ast}G.
\end{equation*}
Thus
\begin{align*}
\hat{R}ic(F_{k},KF_{l}) - \omega \ast G
&= \sum_{i}\hat{R}(F_{i},F_{k},KF_{l},F_{i})
   - \delta^{ij}\hat{R}(KF_{i},F_{k},KF_{l},KF_{j})
\\
&= \sum_{i}\hat{R}(E_{i}+E_{\bar{\imath}},\,E_{k}+E_{\bar{k}},\,E_{l}-E_{\bar{l}},\,E_{i}+E_{\bar{\imath}}) \\
&\quad - \sum_{i}\hat{R}(E_{i}-E_{\bar{\imath}},\,E_{k}+E_{\bar{k}},\,E_{l}-E_{\bar{l}},\,E_{i}-E_{\bar{\imath}})
\\
&= \frac{1}{2}\sum_{i}\Big(
    C(i,\bar{k},l,\bar{\imath})
  + C(i,\bar{k},-\bar{l},i)
  + C(\bar{\imath},k,l,\bar{\imath})
  + C(\bar{\imath},k,-\bar{l},i)
  \Big) \\
&\quad - \frac{1}{2}\sum_{i}\Big(
    C(i,\bar{k},l,-\bar{\imath})
  + C(i,\bar{k},-\bar{l},i)
  + C(-\bar{\imath},k,l,-\bar{\imath})
  + C(-\bar{\imath},k,-\bar{l},i)
  \Big)
\end{align*}

\begin{equation*}
= \sum_{i} C(i,\bar{k},l,\bar{\imath}) - C(\bar{\imath},k,\bar{l},i)
\end{equation*}
We can check that this is exactly the expression derived above. 
\end{proof}

\begin{claim}
In special coordinates, we have 
\begin{equation*}
d\alpha _{n\nabla \psi ^{\perp }}(F_{k},F_{l})=\sum_{i}\left[ C(k,%
\bar{l},\bar{\imath},i)-C(l,\bar{k},\bar{\imath},i)\right] +\omega \ast G
+D\omega \ast G'.
\end{equation*}
\end{claim}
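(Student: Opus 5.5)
We compute $\alpha_{n(\hat\nabla\psi)^\perp}$ directly and then take its exterior derivative on $L$, working at a point in special coordinates of type II (Corollary \ref{IIcoords}).

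\textbf{Step 1: express the form through $d\psi$.} As in the proof of (\ref{212}),
\[
\alpha_{n(\hat\nabla\psi)^\perp}(F_l)=\omega\big(n(\hat\nabla\psi)^\perp,F_l\big)=-n\,h\big((\hat\nabla\psi)^\perp,KF_l\big).
\]
In the totally real setting we replace $KF_l$ by $\tilde K F_l+\omega_l^mF_m$ using (\ref{define Ktwiddle}). Since $\tilde KF_l$ is normal,
\[
h\big((\hat\nabla\psi)^\perp,KF_l\big)=h(\hat\nabla\psi,\tilde KF_l)=d\psi(KF_l)-\omega_l^m\,d\psi(F_m).
\]
Hence
\[
\alpha_{n(\hat\nabla\psi)^\perp}(F_l)=-n\,d\psi(KF_l)+\omega\ast G .
\]

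\textbf{Step 2: differentiate.} We compute
\[
d\alpha(F_k,F_l)=F_k\big(\alpha(F_l)\big)-F_l\big(\alpha(F_k)\big),
\]
with coordinate fields, so that the brackets vanish. Differentiating the $\omega\ast G$ term produces terms of type $\omega\ast G+D\omega\ast G'$. For the main term we use that $K$ is parallel:
\[
F_k\big(d\psi(KF_l)\big)=\hat\nabla^2\psi(F_k,KF_l)+d\psi\big(K\hat\nabla_{F_k}F_l\big).
\]
At the point, $\hat\nabla_{F_k}F_l=A(F_k,F_l)$, and $A(F_k,F_l)$ is symmetric in $k,l$. So the second term cancels upon antisymmetrization, and
\[
d\alpha(F_k,F_l)=-n\big[\hat\nabla^2\psi(F_k,KF_l)-\hat\nabla^2\psi(F_l,KF_k)\big]+\omega\ast G+D\omega\ast G'.
\]

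\textbf{Step 3: evaluate the Hessian of $\psi$ (main obstacle).} We insert $F_k=E_k+T_k^{\bar s}E_{\bar s}$ and $KF_l=E_l-T_l^{\bar s}E_{\bar s}$ with $T_i^{\bar j}=\delta_{ij}-\omega_{ij}$, and drop the $\omega$ corrections. By (\ref{def:psi}),
\[
-2n\psi=-\ln\rho-\ln\bar\rho+\ln\det(-c_{i\bar s}).
\]
The $\rho(x)$ and $\bar\rho(\bar x)$ parts have pure Hessians $\hat\nabla^2(\ln\rho)_{kl}$ and $\hat\nabla^2(\ln\bar\rho)_{\bar k\bar l}$. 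The mixed Christoffel symbols $\Gamma_{i\bar j}$ vanish on the relevant components, since by Lemma \ref{KM41} only $\Gamma_{ij}^m$ and $\Gamma_{\bar i\bar j}^{\bar m}$ are nonzero. In the antisymmetrized pairing, the $E_kE_l$ contributions are symmetric and cancel, and likewise the $E_{\bar k}E_{\bar l}$ contributions. So only mixed second derivatives $\partial_k\partial_{\bar l}$ survive, and these vanish for $\ln\rho+\ln\bar\rho$.

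For $\tfrac12\ln\det(-c)$ the mixed term is
\[
\tfrac12\,\partial_k\partial_{\bar l}\ln\det(-c_{i\bar s})=\tfrac12\big(c^{i\bar s}c_{i\bar s k\bar l}-c^{i\bar q}c_{i\bar p k}\,c^{m\bar p}c_{m\bar q\bar l}\big).
\]
The cubic term vanishes by the type I normalization $c_{\bar s\bar q i}=0$ in Lemma \ref{srecial coords lemma}, and $c^{i\bar s}=-\delta$. This gives $\pm\tfrac12\sum_i c_{k\bar l\bar\imath i}$. The two sign choices in $KF_l$ contribute the pairings $E_kE_{\bar l}$ with a minus sign and $E_{\bar k}E_l$, which produces the factor $1$ rather than $\tfrac12$. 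Antisymmetrizing then yields
\[
\sum_i\big[C(k,\bar l,\bar\imath,i)-C(l,\bar k,\bar\imath,i)\big].
\]

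The delicate point is the sign bookkeeping, so that the result matches the previous Claim exactly. That matching is what makes the curvature terms cancel in Proposition \ref{evl4}. A second delicate point is checking that the special-coordinate normalizations kill all cubic terms, including those from the Christoffel symbols in the Hessian; any that survive are harmless only if they are multiplied by $\omega$.
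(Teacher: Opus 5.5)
Your proposal is correct and is essentially the paper's argument. Both proofs write $\alpha_{n(\hat\nabla\psi)^\perp}(F_l)=-n\,d\psi(\tilde KF_l)$, antisymmetrize the derivative along the coordinate fields $F_k,F_l$, and reduce everything modulo $\omega\ast G+D\omega\ast G'$ to $2\partial_k\partial_{\bar l}(n\psi)-2\partial_l\partial_{\bar k}(n\psi)$, whose cubic part is killed by the type I normalization. The only difference is bookkeeping: you use the covariant Hessian, $\hat\nabla K=0$, torsion-freeness and the vanishing of the mixed Christoffel symbols, whereas the paper writes $KF_l=F_l-2T_l^{\bar s}E_{\bar s}$ and differentiates $\psi(x,T(x))$ directly. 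Carrying out your signs gives $\partial_k\partial_{\bar l}(n\psi)=\tfrac12\sum_i c_{i\bar\imath k\bar l}$, so your $\pm$ resolves to exactly the stated expression.
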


\begin{proof} We compute
\begin{eqnarray*}
\alpha _{n\nabla \psi ^{\perp }}(F_{l}) &=&\omega (\left( n\nabla \psi
\right) ^{\perp },F_{l}) \\
&=&h(K\left( n\nabla \psi \right) ^{\perp },F_{l}) \\
&=&-h(\left( n\nabla \psi \right) ^{\perp },KF_{l}) \\
&=&-h(\left( n\nabla \psi \right) ^{\perp },\tilde{K}F_{l}+\omega
_{l}^{p}F_{p}) \\
&=&-\tilde{K}F_{l}n\psi  \\
&=&-\left( KF_{l}-\omega _{l}^{p}F_{p}\right) n\psi  \\
&=&-\left( F_{l}-2T_{l}^{\bar{s}}E_{\bar{s}}-\omega _{l}^{p}F_{p}\right)
n\psi  \\
&=&-F_{l}n\psi +2T_{l}^{s}E_{\bar{s}}n\psi +\omega _{l}^{p}F_{p}n\psi 
\end{eqnarray*}%
so%
\begin{eqnarray*}
d\alpha _{n\left( \nabla \psi \right) ^{\perp }}(F_{k},F_{l}) &=&F_{k}\omega
(\left( n\nabla \psi \right) ^{\perp },F_{l})-F_{l}\omega (K\left( n\nabla
\psi \right) ^{\perp },F_{k}) \\
&=&-\left( F_{k}F_{l}-F_{l}F_{k}\right) n\psi  \\
&&+F_{k}\left(2 T_{l}^{s}E_{\bar{s}}n\psi \right) -F_{l}\left(2 T_{k}^{s}E_{%
\bar{s}}n\psi \right) + \\
&&F_{k}\left( \omega _{l}^{p}F_{p}n\psi \right) -F_{l}\left( \omega
_{k}^{p}F_{p}n\psi \right) 
\end{eqnarray*}%
\begin{eqnarray*}
&=&2\left( T_{lk}^{s}-T_{kl}^{s}\right) E_{\bar{s}}n\psi +2T_{l}^{s}\frac{%
\partial ^{2}\left( n\psi \right) }{\partial x^{k}\partial \bar{x}^{s}}%
+2T_{l}^{s}T_{k}^{p}\frac{\partial ^{2}\left( n\psi \right) }{\partial \bar{x}%
^{p}\partial \bar{x}^{s}}-2\left( T_{k}^{s}\frac{\partial ^{2}\left( n\psi
\right) }{\partial x^{l}\partial \bar{x}^{s}}+T_{k}^{s}T_{l}^{p}\frac{%
\partial ^{2}\left( n\psi \right) }{\partial \bar{x}^{p}\partial \bar{x}^{s}}%
\right)  \\
&&+\left( \omega _{l}^{p}F_{k}F_{p}-\omega _{k}^{p}F_{l}F_{p}\right) n\psi
+\left( F_{k}\omega _{l}^{p}-F_{l}\omega _{k}^{p}\right) F_{p}n\psi 
\end{eqnarray*}%
\begin{equation*}
=2\delta _{l}^{s}\frac{\partial ^{2}n\psi }{\partial x^{k}\partial \bar{x}^{s}%
}-2\delta _{k}^{s}\frac{\partial ^{2}n\psi }{\partial x^{l}\partial \bar{x}%
^{s}}+\omega \ast G +D\omega \ast G' .
\end{equation*}%
Now recall that 
\begin{equation*}
n\psi =\frac{1}{2}\left( \ln \rho +\ln \bar{\rho}-\ln \det \left( -c_{i\bar{s%
}}\right) \right) 
\end{equation*}%
so \bigskip 
\begin{equation*}
\frac{\partial \left( n\psi \right) }{\partial \bar{x}^{s}}=\frac{1}{2}\frac{%
\bar{\rho}_{s}}{\bar{\rho}}-\frac{1}{2}(c^{a\bar{r}}c_{a\bar{r}\bar{s}})
\end{equation*}%
\begin{equation*}
\frac{\partial ^{2}\left( n\psi \right) }{\partial x^{k}\partial \bar{x}^{s}}%
=-\frac{1}{2}(c^{a\bar{r}}c_{a\bar{r}\bar{s}k}-c^{a\bar{m}}c^{b\bar{r}}c_{b%
\bar{m}k}c_{a\bar{r}\bar{s}})
\end{equation*}%
and in our special coodindates, get 
\begin{eqnarray*}
d\alpha _{n\left( \nabla \psi \right) ^{\perp }}(F_{k},F_{l})-\omega \ast G
-D\omega \ast G'  &=&2\delta _{l}^{s}\left[ -\frac{1}{2}(c^{a\bar{r}}c_{a\bar{r}%
\bar{s}k})\right] -2\delta _{k}^{s}\left[ -\frac{1}{2}(c^{a\bar{r}}c_{a\bar{r}%
\bar{s}l})\right]  \\
&=&\sum_{a}C(a,\bar{a},\bar{l},k)-C(a,\bar{a},\bar{k},l) \\
&=&\sum_{i}C(i,\bar{\imath},\bar{l},k)-C(i,\bar{\imath},\bar{k},l)
\\
&=&\sum_{i}C(k,\bar{l},\bar{\imath},i)-C(l,\bar{k},\bar{\imath},i).
\end{eqnarray*}
\end{proof}

\begin{proof}[Proof of Proposition \ref{evl4}.]
    The proof follows as a consequence of the above lemmas. 
\end{proof}

We end this section by noting that on Lagrangian submanifolds, we get the analogue of Dazord's Theorem \cite{dazord1981geometrie}: 

\begin{theorem}
Along $L$ \label{Dazord}, we get
\begin{equation}
d\left(  F^{\ast}\alpha_{\vec{H}}\right)  =F^{\ast}{\hat{R}ic}(K\cdot,\cdot)).
\end{equation}

\end{theorem}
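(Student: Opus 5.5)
The plan is to specialize the computation of Subsection \ref{pf} to the case $\omega|_L\equiv 0$, where every error term of the form $\omega\ast G$ or $\nabla\omega\ast G'$ vanishes identically. The argument is pointwise and uses no flow, so the time-dependence in those lemmas plays no role. Fix a point $p\in L$. Since $L$ is spacelike, Lemma \ref{graphical} lets us write it locally as a graph over $M$, and we choose special coordinates of type II (Corollary \ref{IIcoords}) at $p$. In these coordinates $g_{ij}=\delta_{ij}$, the induced Christoffel symbols vanish at $p$, $c_{i\bar s}=-\delta_{i\bar s}$, and, because $\omega|_L=0$, $T^{\bar j}_i=\delta_{ij}$ at $p$. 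Moreover $\tilde K=K$ on $TL$, since $\omega_i^m=0$, so $KF_i$ is normal and $\eta_{ij}=-\delta_{ij}$.

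First, by the lemma relating $\alpha_{\vec H}$ to the codifferential, $\alpha_{\vec H}=-d|_L^\ast\omega+\beta$. The term $d|_L^\ast\omega$ vanishes because $\omega|_L\equiv0$ on all of $L$, not merely at $p$, so $\alpha_{\vec H}=\beta$ on $L$. Equivalently, one can use the total symmetry of $a_{ijk}$ on Lagrangians. It therefore suffices to compute $d\beta$.

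Next I would repeat the computation in the lemma for $d\beta$, checking at each step that the discarded terms were exactly multiples of $\omega_{ij}$ or of its derivatives, all of which vanish. The term $h(K\bar\nabla_{F_k}F_i,A(F_j,F_l))$ pairs tangential with normal, and the term $(-F_k\Gamma^m_{jl}+F_l\Gamma^m_{jk})\omega_{im}$ vanishes as well. This leaves
\[
d\beta(F_k,F_l)=g^{ij}\hat R(F_k,F_l,F_j,KF_i).
\]
Then I would invoke the first Claim with all $\omega\ast G$ terms equal to zero. That Claim identifies this trace with the ambient Ricci form $\hat Ric(F_k,KF_l)$: the ambient trace splits over the orthonormal tangent frame $\{F_i\}$ and the normal frame $\{KF_i\}$, which is exactly orthonormal of negative sign since $\eta^{ij}=-\delta^{ij}$. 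Both sides reduce to $\sum_i\big(C(k,\bar l,\bar\imath,i)-C(l,\bar k,\bar\imath,i)\big)$ via Lemma \ref{KM41}.

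The main obstacle is bookkeeping of sign and argument order. We must match $\hat Ric(F_k,KF_l)$ with the stated $\hat Ric(K\cdot,\cdot)$, which requires showing that $\hat Ric(X,KY)=-\hat Ric(KX,Y)$. This follows because $K$ is parallel (\cite[Claim 2.2]{W2025}), so $\hat R(\cdot,\cdot,KZ,KW)=-\hat R(\cdot,\cdot,Z,W)$ in the same way as for $h$. We must also check that both sides are antisymmetric in $(k,l)$, which confirms that the two-form identity holds on the full basis. Finally, pulling back by $F$ gives $d(F^\ast\alpha_{\vec H})=F^\ast\hat Ric(K\cdot,\cdot)$, up to verifying this single sign convention against the explicit $C$-expressions.
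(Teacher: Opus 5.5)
Your strategy is the one the paper intends: it gives no separate proof, and the theorem is the $\omega|_L\equiv 0$ case of the lemmas in Subsection~\ref{pf}. However, your first step has a sign error that the final matching step cannot absorb. On a Lagrangian, $\alpha_{\vec H}=-\beta$, not $\beta$. The lemma you quote is misstated in the paper. Its own derivation ends with $d|_L^{\ast}\omega(F_k)=-\omega(\vec H,F_k)-g^{ij}\omega(F_j,A(F_i,F_k))$, which says $\alpha_{\vec H}=-d|_L^{\ast}\omega-\beta$. The total-symmetry check you mention as an alternative exposes this directly. With $a_{ijk}=h(A(F_i,F_j),KF_k)$ totally symmetric,
\[
\beta(F_k)=g^{ij}h\big(KF_i,A(F_j,F_k)\big)=g^{ij}a_{ijk},\qquad \alpha_{\vec H}(F_k)=h(K\vec H,F_k)=-h(\vec H,KF_k)=-g^{ij}a_{ijk}.
\]

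This matters because everything downstream of that step is correct. In particular $d\beta(F_k,F_l)=\sum_i\hat{R}(F_k,F_l,F_i,KF_i)=\hat{R}ic(F_k,KF_l)$, and the identity $\hat{R}ic(X,KY)=-\hat{R}ic(KX,Y)$ you plan to prove is also true. But that identity does not match $\hat{R}ic(F_k,KF_l)$ with $\hat{R}ic(KF_k,F_l)$; it shows they are negatives of each other. So with $\alpha_{\vec H}=\beta$, your chain actually proves $d(F^\ast\alpha_{\vec H})=-F^\ast\hat{R}ic(K\cdot,\cdot)$. The leftover sign you defer to ``verifying a convention against the $C$-expressions'' is therefore not a convention issue. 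Those expressions are computed in the same conventions and will reproduce the same sign.

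The repair is one line: $d\alpha_{\vec H}=-d\beta=-\hat{R}ic(\cdot,K\cdot)=\hat{R}ic(K\cdot,\cdot)$, using exactly the identity you already intended to prove. You can confirm the sign independently. Codazzi applied to the totally symmetric cubic form $\omega(A(\cdot,\cdot),\cdot)$, with $\{e_i\}$ an orthonormal frame of $TL$, gives $d\alpha_{\vec H}(X,Y)=\sum_i h\big(K\hat{R}(X,Y)e_i,e_i\big)=\tfrac12\,\mathrm{tr}\big(K\circ\hat{R}(X,Y)\big)$. The first Bianchi identity, together with $K$ being parallel, then gives $\mathrm{tr}\big(K\circ\hat{R}(X,Y)\big)=2\hat{R}ic(KX,Y)$.
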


\section{Long-time Existence and Regularity}

In this section, we solve the more interesting problem of long-time existence. The
Ma-Trudinger-Wang condition (A3) was originally introduced while developing maximum principle methods for Monge-Amp\`ere equation \cite{MTW}.  It was given a geometric formulation by Kim-McCann \cite{KM} to be equivalent to the condition 
\begin{equation}
\label{MTW}\hat{R}\big(\xi\oplus\bar{0},0 \oplus\bar{\xi},\xi\oplus\bar{0},0
\oplus\bar{\xi}\big) > 0
\end{equation}
for all points $(x,\bar{x}) \in M\times\bar{M}$ and all nonvanishing tangent
vectors $\xi\in T_{x} M$ and $\bar{\xi} \in T_{\bar{x}} \bar{M}$ satisfying
$h(\xi\oplus\bar{0},0 \oplus\bar{\xi})=0$. Here, $0 \in T_{x} M$ and $\bar{0}
\in T_{\bar{x}} \bar{M}$ denote zero vectors (see \cite[Remark 4.2]{KMW}).
Note that condition \eqref{MTW} is not affected by a conformal change of
the pseudo-metric $h$.

\medskip
We now restate our main theorem in a more precise form.

\begin{theorem}
\label{thm:main_reg}Suppose that $L$ is an immersed compact spacelike Lagrangian
submanifold of a GKM. Suppose that the
GKM satisfies a positive cross-curvature condition on a compact set avoiding the cut locus. Then, if the generalized Lagrangian mean curvature flow stays in this compact set away from the cut-locus along
the flow, then the flow exists for all time; uniform estimates are preserved, and
the flow converges exponentially to a stationary submanifold.
\end{theorem}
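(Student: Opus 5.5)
We reduce the flow locally to a scalar parabolic Monge--Amp\`ere type equation and then run a maximum principle argument adapted from \cite{BLMR}. By Lemma \ref{graphical}, each spacelike Lagrangian $L(t)$ is locally a graph $(x,T(x))$. The Lagrangian condition is preserved along the flow by the Corollary of Section \ref{sect:main}, so on simply connected patches we may write $T$ as a cost-exponential \eqref{def:cexp} of a local potential $u$. Using the formula $\theta=\frac12\ln\big(\rho/(\bar\rho\det DT)\big)$ and the identity $\nabla\theta = K(\vec H - n(\hat\nabla\psi)^\perp)$, the generalized mean curvature flow becomes, after a tangential reparametrization, the vertical flow
\[
\partial_t u=\theta=\tfrac12\ln\rho(x)-\tfrac12\ln\bar\rho(T(x))-\tfrac12\ln\det\big(D^2u+D^2_{xx}c(x,T(x))\big)+\tfrac12\ln\det(-c_{i\bar s}),
\]
locally up to an additive constant in $t$. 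This is the flow of \cite{KSW}. Its linearization is the Laplacian of the induced metric $g_{ij}$, and $\theta$ itself satisfies $\partial_t\theta=\Delta_g\theta+\text{(first order)}$. Hence $\theta$ obeys a maximum principle and stays bounded, giving two-sided control of $\det DT$.

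The key step is a uniform $C^2$ bound: the induced metric $g$ must stay uniformly comparable to a fixed Riemannian metric, i.e.\ $L(t)$ remains uniformly spacelike. Following \cite{BLMR}, fix the Riemannian metric $\tilde m=h(\cdot,K\cdot)$-type combination on $M\times\bar M$ (say $m\oplus\bar m$ chosen via $-c_{i\bar s}$), and consider the quantity
\[
Q=\max_{|v|_g=1}\ \tilde m(dF v,dF v)
\]
on $L(t)$, equivalently the largest eigenvalue of $g^{-1}$ measured against $\tilde m$. Work at a maximum point in special coordinates of type II (Corollary \ref{IIcoords}), where $T_i^{\bar j}=\delta_{ij}$ and the curvature terms reduce to $\frac12 c_{i\bar s\bar p j}$ via Lemma \ref{KM41}. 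We then compute $(\partial_t-\Delta_g)$ of the eigenvalue. The second fundamental form terms should be handled as in the elliptic computation of \cite{BLMR}: the full symmetry of $a_{ijk}$ (Lagrangian) gives a good quadratic term. The $\psi$ terms are lower order, since $\rho,\bar\rho,c$ are smooth on the compact set. The zeroth-order curvature term is exactly a sum of cross-curvatures $\hat R(\xi\oplus\bar0,0\oplus\bar\xi,\xi\oplus\bar0,0\oplus\bar\xi)$ with $h$-orthogonal $\xi,\bar\xi$. Positivity \eqref{MTW} on the compact set, which is uniform there, makes this term strictly favorable at large $Q$, so $Q$ cannot exceed a constant depending on initial data and the compact set.

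\emph{This is the main obstacle}: identifying the curvature term correctly in the parabolic setting and controlling the gradient terms from $\partial_t g$. If a direct eigenvalue argument fails, I would instead use $\log$ of the trace of $g^{-1}$ relative to $\tilde m$ plus a multiple of $\theta^2$ or of $\psi$ to absorb bad first-order terms.

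With $C^2$ bounds the equation is uniformly parabolic and concave in $D^2u$ (it is $\ln\det$). Evans--Krylov plus Schauder estimates, applied locally in the charts above, then give uniform $C^{k}$ bounds on every patch for all times. These estimates depend only on the compact set, since it avoids the cut locus so $c$ is $C^4$ and beyond there. Standard continuation, together with the short-time existence of Section \ref{sect:main}, shows the flow extends past any finite $T$.

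For convergence, $\theta$ satisfies a uniformly parabolic linear equation with bounded smooth coefficients on a compact manifold. A Li--Yau Harnack inequality as in \cite{KMW,AK} gives oscillation decay $\operatorname{osc}\theta(t)\le Ce^{-\lambda t}$. Hence $\vec H_G=-K\nabla\theta$ decays exponentially in $C^k$ after interpolation with the uniform bounds. The immersions therefore converge exponentially to a limit with $\nabla\theta\equiv0$, i.e.\ constant angle and $\vec H_G=0$. By the conformal relation $e^{-2\psi}\vec H_G$, this limit is minimal, hence calibrated for the metric \eqref{metric}.
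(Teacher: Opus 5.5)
Your outline follows the paper's route: Lagrangian preservation, a local cost-exponential potential, the maximum principle for $\theta$, a BLMR-type maximum principle for the ratio of an auxiliary Riemannian metric $\hat S=m+\bar m$ to $h$ on $TL$, then Evans--Krylov/Schauder and a Li--Yau Harnack inequality for $\theta$. The gap is that the decisive step of the slope estimate is asserted rather than proved, and as stated it does not close.

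Work at a maximum point, in a frame with $DT=\mathrm{diag}(\lambda_1,\dots,\lambda_n)$ and maximizing direction $\xi\in\{1,n\}$. There all the error terms are of size $C S_{\xi\xi}^2$: the $\hat\nabla\hat S$ cross term after absorbing it into $-2\sum_i\hat S(A(F_i,F_\xi),A(F_i,F_\xi))$, the ambient Hessian of $\hat S$, and the $\psi$-terms. For $\xi=1$ the cross-curvature contribution is $\lambda_1^2\sum_i\lambda_i^{-1}\hat R_{i\bar 1 i\bar 1}+O(S_{\xi\xi})$.
\begin{itemize}
\item If the $\lambda_i$ are all comparable to $\lambda_1$, this is only of order $\lambda_1\sim S_{\xi\xi}$.
\item Even with some $\lambda_i\sim 1$ it is only of order $S_{\xi\xi}^2$, which does not beat $CS_{\xi\xi}^2$ unless the MTW constant happens to be large.
\end{itemize}
So ``positivity makes this term strictly favorable at large $Q$'' is not enough.

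The missing idea is to feed the two-sided bound on $\det DT=e^{-2\theta}\rho/\bar\rho$ into this step. You derive that bound from the maximum principle for $\theta$ but never use it.
\begin{itemize}
\item For $\xi=1$: $\lambda_1\lambda_n^{n-1}\le\Lambda_0$ gives $\lambda_n^{-1}\ge(\lambda_1/\Lambda_0)^{1/(n-1)}$. Hence the term $\lambda_1^2\lambda_n^{-1}\hat R_{n\bar1 n\bar1}$ is at least $\kappa S_{\xi\xi}^{2+1/(n-1)}$; MTW applies because $E_n$ and $E_{\bar 1}$ are $h$-orthogonal.
\item For $\xi=n$: $\det DT\ge\Lambda_0^{-1}$ gives $\lambda_1\ge(\Lambda_0\lambda_n)^{-1/(n-1)}$, which makes $\lambda_n^{-2}\lambda_1\hat R_{\bar1n\bar1n}$ superquadratic.
\end{itemize}
This is the parabolic replacement for the elliptic equation $\det DT=\rho/\bar\rho$ used implicitly in \cite{BLMR}. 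It is what produces $\partial_tQ\le\bar C S_{\xi\xi}^2-\kappa S_{\xi\xi}^{2+\frac{1}{n-1}}$.

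Two smaller points.

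First, the $\psi$-terms are not lower order in the relevant sense. Differentiating the normal projection $(\hat\nabla\psi)^\perp$ along $F_\xi$ produces the second fundamental form, e.g.\ a term $n\,\hat\nabla\psi\cdot KA(F_\xi,F_\xi)\,\hat S(KF_\xi,F_\xi)$. This must be absorbed into the good $\hat S(A,A)$ term or controlled by the first-order condition at the maximum; the paper does the latter in Claim \ref{annoying}. Afterwards it is $O(S_{\xi\xi}^2)$, so again only the superquadratic curvature term dominates it.

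Second, there is a sign slip in your vertical flow. Since $\theta$ decreases in $D^2u$, $\partial_tu=\theta$ is backward parabolic. The correct flow is $u_t=-2\theta=\ln\det(D^2u+D^2_{xx}c)-\ln\rho+\ln\bar\rho-\ln\det(-c_{i\bar s})$. That is the concave, forward-parabolic equation to which your Evans--Krylov step actually applies.
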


In the sequel, we assume that $M$ and $\bar{M}$ are compact, that
$L$ is compactly immersed, and the immersion lies in a compact region avoiding the cut locus. By Lemma \ref{graphical} and the discussion following it, we may assume that locally the immersion can be represented as the graph over $M$ of a function $T$, which in turn can locally be written as a cost-exponential. For many geometric computations, we therefore work in such neighborhoods, viewing the immersion locally as a map from $M$.

Double-dipping notation, as before, we write
\begin{align*}
\rho(x)  &  =\frac{d\rho}{dx^{1}\wedge...\wedge dx^{n}}\\
\bar{\rho}(x)  &  =\frac{d\bar{\rho}}{d\bar{x}^{1}\wedge...\wedge d\bar{x}^{n}}.
\end{align*}
Recalling Proposition \ref{theta is defined}
and  (\ref{theta defined here})
we define
\begin{equation}
\theta =-\frac{1}{2} \left(\ln\det g_{ij} - \ln\rho + \ln\bar{\rho} - \ln\det\left( b_{is} \right) \right)
\label{def:theta}
\end{equation} taking $g_{ij}$ to be the induced metric on the graph. 
as shown in Proposition \ref{theta is defined}.
This function is geometric, i.e, this quantity is well-defined regardless of
the coordinate system we use.

First, observe that this quantity satisfies a maximum principle. To see this, we do the following geometric computation.
\begin{claim}
\label{thetalution}
\[
\frac{d}{dt}\theta(x,t) = \Delta_g \theta(x,t) 
   + n\, g(\nabla\psi(x),\nabla\theta(x,t)).
\]
\end{claim}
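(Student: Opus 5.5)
The plan is to derive the evolution of $\theta$ from the exactness identity $\nabla\theta = K(\vec H_G)$, where $\vec H_G=\vec H - n(\hat\nabla\psi)^\perp$, together with the fact that Lagrangians stay Lagrangian. The key relation is $d\theta = \alpha_{\vec H_G}$ from \eqref{212}, which holds for every $t$ by the Corollary of Section \ref{sect:main}. Since $\theta$ is geometric, we may compute at a point in special coordinates of type II (Corollary \ref{IIcoords}). We choose a tangential reparametrization so that $\partial_t F = \vec H_G$ exactly. This changes $\partial_t\theta$ only by a term $g(\nabla\theta, V)$; it will be convenient to keep it implicit and to fix the parametrization at the end, since the statement is written for the normal flow.

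First, I compute $\partial_t\theta$ by differentiating the defining relation \eqref{theta}, $\Omega|_L = e^{\mathbf k\theta+n\psi}d\mathrm{Vol}_L$, in time, much as in the proof of \eqref{212} but with $F_i$ replaced by the variation field $V=\vec H_G$.

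\begin{itemize}
\item On the left, $\partial_t(F^*\Omega) = F^*\mathcal L_V\Omega = d(F^*\iota_V\Omega)$, because $\Omega$ is closed (para-holomorphic top degree in each factor). Since $V$ is normal, we write $V = K W$ with $W = K V$ tangent, and $(n,0)$-type gives $\iota_V\Omega = \mathbf k\,\iota_W\Omega$. Using \eqref{theta}, this is $\mathbf k\, e^{\mathbf k\theta+n\psi}\,\iota_W d\mathrm{Vol}_L$.
\item Taking $d$ yields $e^{\mathbf k\theta+n\psi}\big(\mathbf k\,\mathrm{div}(W) + (\mathbf k d\theta+nd\psi)(W)\mathbf k\big)d\mathrm{Vol}_L$.
\item On the right, $\partial_t(e^{\mathbf k\theta+n\psi}d\mathrm{Vol}_L) = (\mathbf k\,\partial_t\theta + n\,d\psi(V) - \langle \vec H, V\rangle)e^{\mathbf k\theta+n\psi}d\mathrm{Vol}_L$.
\end{itemize}

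Comparing $\mathbf k$-parts, using $\mathbf k^2=1$, gives
\[
\partial_t\theta = \mathrm{div}_L W + n\,d\psi(W).
\]

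Next I identify $W$. By \eqref{212}, $W = K\vec H_G = \nabla\theta$, so $\mathrm{div}\,W=\Delta_g\theta$. The remaining term is $n\,d\psi(\nabla\theta) = n\,g(\nabla\psi,\nabla\theta)$, where the tangential gradient of $\psi|_L$ is meant. This gives the claim.

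Finally, I handle the real part and consistency checks. The real part should be an identity expressing $\partial_t$ of the volume density, and it serves as a sanity check on signs. The main obstacle is the sign bookkeeping with the pseudo-metric. This includes $\omega = h(K\cdot,\cdot)$, $h(KX,Y)=-h(X,KY)$, and whether $K\vec H_G=+\nabla\theta$ versus $-\nabla\theta$, which is where a sign error would flip the heat operator. I would first verify these signs against the graphical formula \eqref{def:theta}. A direct alternative is to differentiate \eqref{def:theta} in a local cost-exponential chart, as in the \cite{KSW} parabolic flow, using that the vertical flow has the same normal part. That route needs $\partial_t \ln\det g = -\langle\vec H,\partial_tF\rangle$ adapted to the signature, together with \eqref{def:psi}.
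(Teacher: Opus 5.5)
Your proposal is correct and is essentially the paper's own proof. Both differentiate $\Omega|_L = e^{\mathbf{k}\theta+n\psi}\,d\mathrm{Vol}_L$ along the normal flow, use Cartan's formula with $d\Omega=0$ and the $(n,0)$-type identity $\iota_{K\nabla\theta}\Omega=\mathbf{k}\,\iota_{\nabla\theta}\Omega$ together with $K\vec{H}_G=\nabla\theta$ from \eqref{212}, and then compare $\mathbf{k}$-parts; your $\mathrm{div}_L W$ is just the paper's $*^{-1}d(*d\theta)$, and the detour through special coordinates and the alternative sign checks are unnecessary.
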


\begin{proof}
To be clear, this is computed along a normal flow where the location $x(t)$ is moving along a normal trajectory (not a vertical one). Recall that along $L$,
\[
\Omega|_L = e^{\mathbf{k}\theta(x,t) + n\psi(x)} \, dVol_L .
\]
Thus,
\begin{align*}
\frac{d}{dt}\big(\Omega|_L\big)
    &= F^* \mathcal{L}_{\vec{H}-n(\nabla\psi)^\perp}\Omega  \\
    &= F^* d\big( \iota_{\vec{H}-n(\nabla\psi)^\perp}\Omega \big) \\
    &= F^* d(\iota_{K\nabla\theta} \Omega)  \\
    &= d\, F^*(\iota_{K\nabla\theta}\Omega)  \\
    &= d\, F^*(\mathbf{k}\cdot \iota_{\nabla\theta}\Omega),
\end{align*}
using that $\Omega$ is para-holomorphic.

Hence
\[
\frac{d}{dt}\big(\Omega|_L\big)
    = d\!\left( \mathbf{k}\cdot \iota_{\nabla\theta}
        \big(e^{\mathbf{k}\theta+n\psi} dVol_L\big) \right).
\]

Since
\[
\iota_{\nabla\theta} dVol_L = *\, d\theta ,
\]
we obtain
\begin{align*}
\frac{d}{dt}\big(\Omega|_L\big)
    &= d\!\left( \mathbf{k}\, e^{\mathbf{k}\theta+n\psi} *d\theta \right) \\
    &= \mathbf{k}\left( (\mathbf{k}\, d\theta + n\, d\psi)\,
         e^{\mathbf{k}\theta+n\psi} \wedge *d\theta
         + e^{\mathbf{k}\theta+n\psi} d(*d\theta) \right) \\
    &= e^{\mathbf{k}\theta+n\psi}
       \left[ d\theta\wedge *d\theta
              + \mathbf{k}\left( n\, d\psi\wedge *d\theta + d(*d\theta) \right)
       \right].
\end{align*}

On the other hand,
\[
\frac{d}{dt}\left(e^{\mathbf{k}\theta+n\psi} dVol_L\right)
    = \left(\mathbf{k}\,\frac{d}{dt}\theta(x,t)
            + n\,\frac{d}{dt}\psi(F(x,t))\right)
        e^{\mathbf{k}\theta+n\psi} dVol_L
        + e^{\mathbf{k}\theta+n\psi} \frac{d}{dt} dVol_L.
\]

Matching real and imaginary parts, we obtain
\[
\frac{d}{dt}\theta(x,t)\, dVol_L
    = n\, d\psi\wedge *d\theta + d(*d\theta),
\]
and
\[
n\,\frac{d}{dt}\psi(F(x,t)) + \frac{d}{dt} dVol_L
    = d\theta\wedge *d\theta.
\]

Therefore,
\begin{align*}
\frac{d}{dt}\theta(x,t)
    &= *^{-1} d(*d\theta) + *^{-1}\big(n\, d\psi\wedge *d\theta\big) \\
    &= - d^* d\theta + n\, g(\nabla\psi,\nabla\theta) \\
    &= \Delta_g \theta + n\, g(\nabla\psi,\nabla\theta).
\end{align*}

Finally,
\[
\frac{d}{dt} dVol_L 
    = |\nabla\theta|^2 - n\,\frac{d}{dt}\psi(F(x,t)).
\]

\end{proof}

It follows from Claim \ref{thetalution} that $\theta$ must satisfy a maximum principle. 

\begin{corollary} \label{thetamaxprinciple}
    Given a generalized Lagrangian mean curvature flow, the value of $\theta$ is bounded above and below by its initial values. 
\end{corollary}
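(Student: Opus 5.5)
The plan is to apply the parabolic weak maximum principle to the linear equation of Claim \ref{thetalution},
\[
\partial_t \theta = \Delta_g \theta + n\, g(\nabla\psi,\nabla\theta),
\]
on the compact manifold $L_0$. This equation has no zeroth-order term, so constants are solutions. We may therefore compare $\theta$ with $\max_{L_0}\theta(\cdot,0)$ and $\min_{L_0}\theta(\cdot,0)$.

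The first point concerns parametrization. Claim \ref{thetalution} is computed along a normal flow, meaning $\partial_t F = \vec H - n(\hat\nabla\psi)^\perp$ exactly. The flow in Definition \ref{def:gmcf} only prescribes the normal component, so a general solution differs from this by a tangential diffeomorphism of $L_0$. Under reparametrization, $\theta$ picks up an extra term $\langle V,\nabla\theta\rangle$, where $V$ is a tangential vector field, and this is just another first-order drift term. In either case, the pointwise values of $\max\theta(\cdot,t)$ and $\min\theta(\cdot,t)$ do not depend on the parametrization. So I would either work with the normal parametrization or absorb the tangential term into the drift $n\nabla\psi + V$.

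Next comes the standard argument. Fix $\varepsilon>0$ and set $\theta_\varepsilon = \theta - \varepsilon t$. Suppose $\theta_\varepsilon$ exceeds $\max\theta(\cdot,0)$ at some time. Since $L_0$ is compact, there is a first time $t_1>0$ and a point $p$ where $\theta_\varepsilon(\cdot,t_1)$ attains a new spatial maximum. At $(p,t_1)$ we have $\nabla\theta = 0$ and $\Delta_g\theta\le 0$, because $g$ is Riemannian (the submanifold is spacelike). We also have $\partial_t\theta_\varepsilon \ge 0$. But the equation gives
\[
\partial_t\theta_\varepsilon = \Delta_g\theta + n\,g(\nabla\psi,\nabla\theta) - \varepsilon \le -\varepsilon < 0,
\]
a contradiction. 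Letting $\varepsilon\to 0$ gives $\theta(\cdot,t)\le \max\theta(\cdot,0)$. The lower bound follows the same way with $\theta+\varepsilon t$ and minima.

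The main obstacle is justifying that $\theta$ is a globally defined, smooth function on $L_0\times[0,T)$ to which this argument applies. Proposition \ref{theta is defined} requires $L(t)$ to be spacelike and Lagrangian. The Lagrangian condition is preserved by the preceding corollary (Theorem \ref{thm:main1}). Spacelikeness holds on the interval where the flow is smooth, which is implicit in the hypothesis "given a generalized Lagrangian mean curvature flow". Since $\theta$ is real-valued and defined pointwise by \eqref{theta}, with no branch ambiguity unlike the complex case, it is a single-valued smooth function. The maximum principle therefore applies directly on compact $L_0$, and no covering argument is needed.
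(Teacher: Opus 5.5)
Your proposal is correct and matches the paper's argument. The paper's entire proof is the remark that the bound follows from the maximum principle for the drift--heat equation $\partial_t\theta=\Delta_g\theta+n\,g(\nabla\psi,\nabla\theta)$ of Claim~\ref{thetalution}, and your $\varepsilon t$-perturbation argument, your reparametrization remark, and your single-valuedness observation simply fill in details the paper leaves implicit.
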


\subsection{Slope Estimates for the Graph} 

To make a global sense of slope bounds, we follow the approach of \cite{BLMR} and introduce a global Riemannian metric 
$\hat{S}$ on the product space. In this framework, the slope can be measured by computing the 
$\hat{S}$-length of tangent vectors to the immersion that have unit length with respect to the pseudo-Riemannian metric.

\subsubsection{Construction of the Auxiliary Metric $\hat{S}$}

We use the indefinite metric to define a fully Riemannian metric on 
$M \times \bar{M}$.  Pick an arbitrary Riemannian metric $m$ on $M$.  
For vectors $\bar{W}_i \in T_{\bar{x}}\bar{M}$ define
\[
\bar{m}(\bar{W}_{1},\bar{W}_{2})
    = 4\, m^{ij}\, h(V_{i},\bar{W}_{1})\, h(V_{j},\bar{W}_{2}),
\]
where the trace is taken over some basis $\{V_i\}$ for $T_{p}M$.  
We can check that $\bar{m}$ defines a positive definite symmetric bilinear form 
on the tangent space of $\bar{M}$ at any point in the product where $h$ 
is defined.

Now define the auxiliary metric
\[
\hat{S} = m + \bar{m},
\]
which is a positive definite metric on the product space.  
Note that this is not a product metric, as the second factor depends 
on the first point.

Our goal is to bound the ratio
\begin{equation}
\mathcal{R}(V) = \frac{\hat{S}(V,V)}{h(V,V)}
\label{def:ratio}
\end{equation}
over all vectors $V$ in the tangent space to the graph.

Note that this quotient can be maximized for two different geometric reasons.  
One is that the tangent vector is very flat, in which case $h(V,V)$ is small 
relative to $\hat{S}(V,V)$.  
The other is that the tangent vector is very steep.

\subsubsection{Coordinates at the Maximum Point}

Since $L$ is compact, the quantity defined by (\ref{def:ratio}) is bounded whenever the immersion is spacelike, in which case it represents the ratio of two Riemannian metrics on the same tangent space. Consequently, (\ref{def:ratio}) attains a maximum on the unit tangent bundle. Let $\left( x,\bar{x}\right) $ be a point where this maximum is achieved.
We now describe the tangent space at this point more precisely.

At this point, take normal coordinates with respect to $m$.  Then choose Type I special coordinates for $\bar{M}$.
We have a product coordinate chart with%
\begin{equation*}
h=\frac{1}{2}\left( 
\begin{array}{cc}
0 & \delta _{i\bar{s}} \\ 
\delta _{\bar{s}i} & 0%
\end{array}%
\right) 
\end{equation*}%
at the point $\left( x,\bar{x}\right) $. We also know that the first chart
was chosen normal by $m.$ Note that all derivatives in the chart are
determined by normal coordinates in the original metric, linearly modifying
by the inverse of $c_{is}$ which is controlled on a compact region. 
In this choice of coordinates, at the point, we have 
\begin{align*}
\hat{S}\left( E_{i},E_{j}\right) & =\delta _{ij} \\
\hat{S}\left( E_{\bar{k}},E_{\bar{l}}\right) & =4\delta ^{ij}\frac{1}{2}\delta
_{ki}\frac{1}{2}\delta _{lj}=\delta _{kl}.
\end{align*}%
We  know that the K\"{a}hler form at the point is still 
\begin{equation*}
\omega (\cdot ,\cdot )=h(K\cdot ,\cdot )
\end{equation*}%
so a graphical tangent vector over $M$ of the form 
\begin{equation*}
\widetilde{F}_{i}=E_{i}+T_{i}^{\bar{s}}E_{\bar{s}}
\end{equation*}%
must satisfy $\omega (\widetilde{F}_{i},\widetilde{F}_{j})=0$ which, using $c_{i\bar{s}}=-\delta _{i%
\bar{s}}$ yields 
\begin{equation*}
T_{i}^{\bar{j}}=T_{j}^{\bar{\imath}}
\end{equation*}%
at the point $x.$ \ In particular the matrix $DT$ is symmetric, and we may
diagonalize 
\begin{equation*}
DT=\left( \lambda _{1},...,\lambda _{n}\right) 
\end{equation*}%
where we are choosing the order of the eigenvalues $\lambda _{1}\geq ...\geq
\lambda _{n}$ and noting that the space-like condition requires the
eigenvalues of $DT$ to be positive. \ Thus at a point we have 
\begin{equation*}
\widetilde{F}_{i}=E_{i}+\lambda _{i}E_{\bar{\imath}}
\end{equation*}%
and  
\begin{equation*}
\frac{\hat{S}(\widetilde{F}_{i},\widetilde{F}_{i})}{h(\widetilde{F}_{i},\widetilde{F}_{i})}
=\frac{1+\lambda _{i}^{2}}{\lambda _{i}}.
\end{equation*}%
Next we claim that the maximum of the ratio must occur at either $\lambda
_{1}$ or $\lambda _{n}.$

First, we note that the functiom 
\begin{equation*}
\lambda \rightarrow \frac{1+\lambda ^{2}}{\lambda }
\end{equation*}%
is convex so the maximum over a set of finite values is either at the largest or
the smallest. \ So either 
\begin{equation*}
\frac{1+\lambda _{1}^{2}}{\lambda _{1}}\geq \frac{1+\lambda _{i}^{2}}{%
\lambda _{i}}
\end{equation*}%
or 
\begin{equation*}
\frac{1+\lambda _{n}^{2}}{\lambda _{n}}\geq \frac{1+\lambda _{i}^{2}}{%
\lambda _{i}}.
\end{equation*}
A basic linear algebra argument gaurantees that the quotient will be maximized along one of the these two eigenvectors.  
So at the point where the maximum occurs, we have chosen coordinates $
\widetilde{F}_{i}=E_{i}+\lambda _{i}E_{\bar{\imath}}$ and the maximum ratio $\mathcal{R}%
(V)$ is either at $\widetilde{F}_{1}$ or $\widetilde{F}_{n}$

Rescale the tangent vectors diagonally by
$\widetilde{F}_i \mapsto \frac{1}{\sqrt{\lambda_i}}\,\widetilde{F}_i$. 
The resulting vectors form an orthonormal basis with respect to the induced metric $g$. We then exponentiate these vectors with respect to $g$ in a neighborhood, obtaining a new coordinate system on $M$ that agrees with the original coordinates at the point 
$x$ and is normal there. The purpose of this construction is to allow us to carry out computations in normal coordinates at the point.

Define the set of tangent vectors $F_i$, to be the coordinate tangent frame obtained by exponentiating as
$$ \frac{1}{\sqrt{\lambda_i}} E_i$$ with respect to the induced metric. Note that at this point, these are represented in the ambient (original coordinates) as 
$$ \frac{1}{\sqrt{\lambda_i}} E_i + \sqrt{\lambda_i} E_{\bar{i}}.$$


\subsubsection{ Maximum Principle Formulas for the Slope Ratio}

The following is a general statement concerning the maximum of ratios for tensors on a Riemannian manifold; it does not depend on any specific properties of the manifold or submanifolds considered here.

\begin{proposition}
\label{p1}
Let $S$ be a symmetric $(0,2)$-tensor on $M$. 
Suppose $p_0$ is a point where the function $R(V)$ achieves its maximum, 
and let $F_{\maxidx}$ be a maximizing vector. 
If $S$ is diagonal at $p_0$ with respect to normal coordinates 
$\{F_1,\ldots,F_n\}$ containing $F_{\maxidx}$, then
\[
\nabla_{F_i}\nabla_{F_i} S(F_{\maxidx},F_{\maxidx}) \le 0
\quad \text{for all } i .
\]
\end{proposition}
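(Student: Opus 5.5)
The plan is to reduce the statement to a one-variable second-derivative test along geodesics. Here $R(V)=S(V,V)/g(V,V)$ is the ratio over the unit tangent bundle, with $g$ the Riemannian metric on the manifold. Let $M_0=R(F_{\maxidx})=S(F_{\maxidx},F_{\maxidx})$ denote the maximum value, recalling that $F_{\maxidx}$ is $g$-unit at $p_0$ because the coordinates are normal.

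First I will produce a comparison function. Fix an index $i$, and let $\gamma(s)=\exp_{p_0}(sF_i)$ be the $g$-geodesic through $p_0$. Let $V(s)$ be the parallel transport of $F_{\maxidx}$ along $\gamma$. Then $g(V(s),V(s))\equiv 1$, so
\[
f(s):=S(V(s),V(s))=R(V(s))\le M_0=f(0)
\]
for all small $s$. Thus $f$ has an interior maximum at $s=0$, which gives $f'(0)=0$ and $f''(0)\le 0$.

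Second, I will identify $f''(0)$ with the covariant Hessian. Since $V$ is parallel and $\dot\gamma$ is geodesic, the product rule gives
\[
f'(s)=(\nabla_{\dot\gamma}S)(V,V),\qquad f''(s)=(\nabla_{\dot\gamma}\nabla_{\dot\gamma}S)(V,V)
\]
in the sense of the second covariant derivative $\nabla^2S(\dot\gamma,\dot\gamma;V,V)$. The term $\nabla_{\nabla_{\dot\gamma}\dot\gamma}S$ drops out because $\gamma$ is a geodesic.

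Third, I will check the coordinate interpretation at $p_0$. In normal coordinates the Christoffel symbols vanish at $p_0$, and $F_i=\dot\gamma(0)$. Hence $\nabla_{F_i}\nabla_{F_i}S(F_{\maxidx},F_{\maxidx})$, read as the second covariant derivative, equals $f''(0)\le 0$. This proves the claim for every $i$.

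The only subtle point is the meaning of $\nabla_{F_i}\nabla_{F_i}S(F_{\maxidx},F_{\maxidx})$. If it instead means the iterated derivative $F_iF_i\bigl(S(F_{\maxidx},F_{\maxidx})\bigr)$ of coordinate components, it differs from the tensor Hessian by terms involving $\partial\Gamma$ at $p_0$, and these need not vanish. So I will state explicitly that the tensor Hessian is meant. The diagonal hypothesis on $S$ is not needed for the inequality itself; it presumably serves to make $F_{\maxidx}$ an eigenvector, so that first-order variations in $V$ vanish ($S(F_{\maxidx},W)=M_0\,g(F_{\maxidx},W)$). This is the form in which the proposition will be used later, combined with the first-order condition $\nabla_{F_i}S(F_{\maxidx},F_{\maxidx})=0$.
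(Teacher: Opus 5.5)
Your proof is correct, and it takes a cleaner route than the paper's. The paper works with the coordinate field $F_{\maxidx}$ of the normal chart, whose $g$-length is not constant. It therefore differentiates the quotient $S(F_{\maxidx},F_{\maxidx})/g(F_{\maxidx},F_{\maxidx})$ twice along the coordinate line and reconciles two correction terms. On one side, $\nabla^2 S(F_{\maxidx},F_{\maxidx},F_i,F_i)=S_{\maxidx\maxidx,ii}-2S(\nabla_{F_i}\nabla_{F_i}F_{\maxidx},F_{\maxidx})$. On the other, $g_{\maxidx\maxidx,ii}=2g(\nabla_{F_i}\nabla_{F_i}F_{\maxidx},F_{\maxidx})=2\beta$. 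The diagonality of $S$ is what turns $S(\nabla_{F_i}\nabla_{F_i}F_{\maxidx},F_{\maxidx})$ into $\beta S_{\maxidx\maxidx}$, so that the two corrections match.

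Extending $F_{\maxidx}$ by parallel transport along $\exp_{p_0}(sF_i)$ instead makes the denominator identically $1$ and removes every connection term. Your $f''(0)$ is then the covariant Hessian directly, and no diagonality is needed. As you note, the eigenvector relation $S(F_{\maxidx},W)=M_0\,g(F_{\maxidx},W)$ is forced by maximality at $p_0$ anyway, so that hypothesis is redundant on either route.

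The paper's computation does make the discrepancy explicit: $S_{\maxidx\maxidx,ii}-\nabla^2S(F_{\maxidx},F_{\maxidx},F_i,F_i)=g_{\maxidx\maxidx,ii}\,S_{\maxidx\maxidx}$. This confirms your reading that the inequality concerns the tensor Hessian. The later maximum-principle argument only uses $\sum_i\nabla_{F_i}\nabla_{F_i}S(F_{\maxidx},F_{\maxidx})\le 0$, and your argument suffices for that.
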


\begin{proof}
Working entirely intrinsically, we get 
\begin{align*}
\nabla\nabla S(F_{\maxidx},F_{\maxidx},F_{i},F_{i})
&= F_{i}\nabla S(F_{\maxidx},F_{\maxidx},F_{i})\\
&\quad -\nabla S(\nabla_{F_{i}}F_{\maxidx},F_{\maxidx},F_{i})
       -\nabla S(F_{\maxidx},\nabla_{F_{i}}F_{\maxidx},F_{i})
       -\nabla S(F_{\maxidx},F_{\maxidx},\nabla_{F_{i}}F_{i}).
\end{align*}

Evaluating this at the origin (note all intrinsic connection terms vanish), we get
\begin{align*}
\nabla\nabla S(F_{\maxidx},F_{\maxidx},F_{i},F_{i})
&= F_{i}\nabla S(F_{\maxidx},F_{\maxidx},F_{i})\\
&= F_{i}\left(F_{i}S(F_{\maxidx},F_{\maxidx})
     - S(\nabla_{F_{i}}F_{\maxidx},F_{\maxidx})
     - S(F_{\maxidx},\nabla_{F_{i}}F_{\maxidx})\right) \\
&= S_{\maxidx\maxidx,ii}
  - 2\big[\nabla S(\nabla_{F_{i}}F_{\maxidx},F_{\maxidx},F_{i})
         + S(\nabla_{F_{i}}\nabla_{F_{i}}F_{\maxidx},F_{\maxidx})
         + S(\nabla_{F_{i}}F_{\maxidx},\nabla_{F_{i}}F_{\maxidx})\big]\\
&= S_{\maxidx\maxidx,ii}
  - 2S(\nabla_{F_{i}}\nabla_{F_{i}}F_{\maxidx},F_{\maxidx}).
\end{align*}

Let
\[
v = \left[(\nabla_{F_{i}}\nabla_{F_{i}}F_{\maxidx})\cdot F_{\maxidx}\right] F_{\maxidx}
    = \beta F_{\maxidx}.
\]

Since $S$ is diagonal,
\begin{align*}
\nabla\nabla S(F_{\maxidx},F_{\maxidx},F_{i},F_{i})
&= S_{\maxidx\maxidx,ii} - 2S(\beta F_{\maxidx},F_{\maxidx})\\
&= S_{\maxidx\maxidx,ii} - 2\beta S_{\maxidx\maxidx}.
\end{align*}

Next compute derivatives of
\[
\frac{S(F_{\maxidx},F_{\maxidx})}{g(F_{\maxidx},F_{\maxidx})}.
\]

The first derivative is:
\[
F_{i}\left( \frac{S(F_{\maxidx},F_{\maxidx})}{g(F_{\maxidx},F_{\maxidx})} \right)
= \frac{S_{\maxidx\maxidx,i}g_{\maxidx\maxidx}
      - g_{\maxidx\maxidx,i}S_{\maxidx\maxidx}}
       {g_{\maxidx\maxidx}^{2}}.
\]

The second derivative is:
\begin{align*}
F_{i}F_{i} \left( \frac{S(F_{\maxidx},F_{\maxidx})}{g(F_{\maxidx},F_{\maxidx})}\right)
= \frac{
 S_{\maxidx\maxidx,ii}g_{\maxidx\maxidx}
 + S_{\maxidx\maxidx,i}g_{\maxidx\maxidx,i}
 - g_{\maxidx\maxidx,ii}S_{\maxidx\maxidx}
 - g_{\maxidx\maxidx,i}S_{\maxidx\maxidx,i}}
 {g_{\maxidx\maxidx}^{2}}
\\
 - \frac{2(S_{\maxidx\maxidx,i}g_{\maxidx\maxidx}
   - g_{\maxidx\maxidx,i}S_{\maxidx\maxidx})}{g_{\maxidx\maxidx}^{3}}
   g_{\maxidx\maxidx,i}.
\end{align*}

At the origin, this simplifies to
\[
F_{i}F_{i}\frac{S(F_{\maxidx},F_{\maxidx})}{g(F_{\maxidx},F_{\maxidx})}
= S_{\maxidx\maxidx,ii} - g_{\maxidx\maxidx,ii} S_{\maxidx\maxidx}.
\]

Next compute
\begin{align*}
g_{\maxidx\maxidx,ii}
&= F_{i}F_{i} g(F_{\maxidx},F_{\maxidx})\\
&= F_{i}\, 2g(\nabla_{F_{i}}F_{\maxidx},F_{\maxidx})\\
&= 2g(\nabla_{F_{i}}\nabla_{F_{i}}F_{\maxidx},F_{\maxidx})
  +2g(\nabla_{F_{i}}F_{\maxidx},\nabla_{F_{i}}F_{\maxidx})\\
&= 2g(\nabla_{F_{i}}\nabla_{F_{i}}F_{\maxidx},F_{\maxidx})
   \quad (\text{at origin})\\
&= 2g(\beta F_{\maxidx},F_{\maxidx})
 = 2\beta.
\end{align*}

So we get
\begin{align*}
0
&\ge F_{i}F_{i}\frac{S(F_{\maxidx},F_{\maxidx})}{g(F_{\maxidx},F_{\maxidx})} \\
&= S_{\maxidx\maxidx,ii} - 2\beta S_{\maxidx\maxidx}\\
&= \nabla\nabla S(F_{\maxidx},F_{\maxidx},F_{i},F_{i}).
\end{align*}

Thus, the tensorial statement is
\[
\nabla_{F_i}\nabla_{F_i}S(F_{\maxidx},F_{\maxidx})\leq0.
\]
\end{proof}

Next, we state the following proposition, which is based on \cite[Proposition 1]{BLMR}.

\begin{proposition}
Let $L$ be a Lagrangian submanifold and let $\hat S$ be a tensor on the ambient manifold. 
Denote by $S=\hat S|_L$ its restriction to $L$. Suppose that at a point $p\in L$ the induced metric $g$ is diagonalized by a frame 
$\{F_1,\dots,F_n\}$. Then at $p$ we have
\begin{align*}
g^{mm}\nabla_{F_m}\nabla_{F_m}S(F_{\maxidx},F_{\maxidx})
&= g^{mm}\hat\nabla_{F_m}\hat\nabla_{F_m}\hat S(F_{\maxidx},F_{\maxidx})
   + \hat\nabla_{\vec H}\hat S(F_{\maxidx},F_{\maxidx}) \\
&\quad + 4g^{mm}\hat\nabla_{F_m}\hat S(A(F_m,F_{\maxidx}),F_{\maxidx}) \\
&\quad + 2g^{mm}\hat S(A(F_m,F_{\maxidx}),A(F_m,F_{\maxidx})) \\
&\quad + 2g^{mm}\hat S\!\left((\hat\nabla_{F_{\maxidx}}A)^{\perp}(F_m,F_m),F_{\maxidx}\right) \\
&\quad + 2g^{mm}g^{kl}\hat R(F_m,F_{\maxidx},F_m,KF_k)\,\hat S(KF_l,F_{\maxidx}) \\
&\quad - 2g^{mm}A(F_m,F_{\maxidx})\cdot A(F_m,F_j)g^{ij}\,\hat S(F_i,F_{\maxidx}),
\end{align*}
where $\{KF_l\}$ denotes a basis for the normal space.
\end{proposition}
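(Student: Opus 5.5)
We will reduce everything to a pointwise computation at $p$ in normal coordinates for the induced metric $g$, in the same spirit as the proof of Proposition \ref{p1}. Choose the frame $\{F_1,\dots,F_n\}$ to be coordinate fields of a $g$-normal chart at $p$, so that $\nabla_{F_i}F_j=0$ at $p$. Then $\hat\nabla_{F_i}F_j=A(F_i,F_j)$ at $p$, and since $L$ is Lagrangian, $\{KF_l\}$ spans the normal space with $h(KF_k,KF_l)=-g_{kl}$.

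\textbf{Step 1: expand the left-hand side.} Since $S(F_a,F_b)=\hat S(F_a,F_b)$ and the intrinsic Christoffel symbols vanish at $p$,
\[
\nabla_{F_m}\nabla_{F_m}S(F_{\maxidx},F_{\maxidx})=F_mF_m\big(\hat S(F_{\maxidx},F_{\maxidx})\big)-2\hat S\big(\nabla_{F_m}\nabla_{F_m}F_{\maxidx},F_{\maxidx}\big)
\]
at $p$. Expanding $F_mF_m\hat S(F_{\maxidx},F_{\maxidx})$ with the ambient connection gives
\[
\begin{aligned}
&F_m\big((\hat\nabla_{F_m}\hat S)(F_{\maxidx},F_{\maxidx})\big)+4(\hat\nabla_{F_m}\hat S)\big(\hat\nabla_{F_m}F_{\maxidx},F_{\maxidx}\big)\\
&\quad+2\hat S\big(\hat\nabla_{F_m}F_{\maxidx},\hat\nabla_{F_m}F_{\maxidx}\big)+2\hat S\big(\hat\nabla_{F_m}\hat\nabla_{F_m}F_{\maxidx},F_{\maxidx}\big).
\end{aligned}
\]
In the first term we convert to the ambient Hessian. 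The discrepancy is $(\hat\nabla_{\hat\nabla_{F_m}F_m}\hat S)(F_{\maxidx},F_{\maxidx})$, which after tracing with $g^{mm}$ gives $\hat\nabla_{\vec H}\hat S(F_{\maxidx},F_{\maxidx})$. Substituting $\hat\nabla_{F_m}F_{\maxidx}=A(F_m,F_{\maxidx})$ produces the third and fourth lines of the formula.

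\textbf{Step 2: decompose $\hat\nabla_{F_m}\hat\nabla_{F_m}F_{\maxidx}$.} Write
\[
\hat\nabla_{F_m}\hat\nabla_{F_m}F_{\maxidx}=\hat\nabla_{F_m}\big(\nabla_{F_m}F_{\maxidx}\big)+\hat\nabla_{F_m}\big(A(F_m,F_{\maxidx})\big).
\]
\begin{itemize}
\item The first piece equals $\nabla_{F_m}\nabla_{F_m}F_{\maxidx}+A(F_m,\nabla_{F_m}F_{\maxidx})$. The second summand vanishes at $p$, and the first exactly cancels the subtracted intrinsic term from Step 1.
\item The tangential part of $\hat\nabla_{F_m}A(F_m,F_{\maxidx})$ is given by the Weingarten equation, namely $-\big(A(F_m,F_{\maxidx})\cdot A(F_m,F_j)\big)g^{ij}F_i$. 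This yields the last line.
\item For the normal part $(\hat\nabla_{F_m}A)^\perp(F_m,F_{\maxidx})$, we use the Codazzi equation to rewrite it as $(\hat\nabla_{F_{\maxidx}}A)^\perp(F_m,F_m)$ plus the normal component of $\hat R(F_m,F_{\maxidx})F_m$.
\end{itemize}

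\textbf{Step 3: express the curvature term in the normal basis.} Expand the normal component of the curvature in the basis $\{KF_l\}$. Since $h(KF_k,KF_l)=-g_{kl}$, this component is $\pm g^{kl}\hat R(F_m,F_{\maxidx},F_m,KF_k)\,KF_l$. Feeding it into $2\hat S(\,\cdot\,,F_{\maxidx})$ and tracing gives the curvature line of the formula.

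\textbf{Main obstacle.} The hard part is sign bookkeeping in Steps 2 and 3. The ambient metric is indefinite, so the Weingarten and Codazzi identities must be written with $h$ rather than a positive-definite inner product. The normal projection must use $-g^{kl}$, and the sign of the curvature term depends on the paper's convention for $\hat R$, as fixed in Lemma \ref{KM41}. I would first verify the signs on a flat model, where $\hat R=0$ and $A$ can be written explicitly for a graph, to calibrate the Weingarten sign. I would then fix the curvature sign by comparing the $\hat R$-term with the Codazzi identity in the normal-coordinate frame.
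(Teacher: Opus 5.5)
Your proposal is correct and is essentially the paper's argument. Both use $g$-normal coordinates at $p$, the splitting $\hat\nabla=\nabla+A$, the Weingarten identity for the tangential part and Codazzi for the normal part of $\hat\nabla_{F_m}\bigl(A(F_m,F_{\maxidx})\bigr)$, and expansion of the curvature term in the normal frame $\{KF_l\}$ with $h(KF_k,KF_l)=-g_{kl}$. The paper only organizes this differently: it first derives $\nabla_W\nabla_V S(X,Y)$ for general tangent $V,W,X,Y$ from $\nabla_V S(X,Y)=\hat\nabla_V\hat S(X,Y)+\hat S(A(V,X),Y)+\hat S(X,A(V,Y))$, and traces at the end.

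The sign issue you flag needs no model calibration. Differentiating $h\bigl(A(F_m,F_{\maxidx}),F_j\bigr)=0$ fixes the Weingarten term directly. With $\hat R(X,Y)=[\hat\nabla_X,\hat\nabla_Y]-\hat\nabla_{[X,Y]}$, the Codazzi term is $-g^{kl}h\bigl(\hat R(F_m,F_{\maxidx})F_m,KF_k\bigr)KF_l$. This equals the stated $+g^{kl}\hat R(F_m,F_{\maxidx},F_m,KF_k)KF_l$ exactly under the convention $\hat R(X,Y,Z,W)=h\bigl(\hat R(X,Y)W,Z\bigr)$, which is the opposite of the one used in Section~\ref{sect:main}; you should simply declare that convention.
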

\begin{proof}
The proof follows from the argument used in Proposition 1 of \cite{BLMR}. For
the convenience of the reader, we include the details here.

Take tangent vectors $\{V,X,Y,W\}$ (assume these are in normal coordinates at
a point with respect to the induced metric):
\begin{align*}
\nabla_{V}S(X,Y) &  =VS(X,Y)-S(\nabla_{V}X,Y)-S(X,\nabla_{V}Y)\\
\hat{\nabla}_{V}\hat{S}(X,Y) &  =V\hat{S}(X,Y)-\hat{S}(\hat{\nabla}%
_{V}X,Y)-S(X,\hat{\nabla}_{V}Y).
\end{align*}
Then
\begin{align*}
\nabla_{V}S(X,Y) &  =\hat{\nabla}_{V}\hat{S}(X,Y)+\hat{S}(\hat{\nabla}%
_{V}X,Y)+S(X,\hat{\nabla}_{V}Y)-S(\nabla_{V}X,Y)-S(X,\nabla_{V}Y)\\
&  =\hat{\nabla}_{V}\hat{S}(X,Y)+\hat{S}(A(V,X),Y)+S(X,A(V,Y)).
\end{align*}
and
\begin{align*}
\nabla_{W}\nabla_{V}S(X,Y) &  =W\nabla_{V}S(X,Y)-\nabla_{\nabla_{W}%
V}S(X,Y)-\nabla_{V}S(\nabla_{W}X,Y)-\nabla_{V}S(X,\nabla_{W}Y)\\
&  =W\left[  \hat{\nabla}_{V}\hat{S}(X,Y)+\hat{S}%
(A(V,X),Y)+S(X,A(V,Y))\right]  \\
&  \qquad-\nabla_{\nabla_{W}V}S(X,Y)-\nabla_{V}S(\nabla_{W}X,Y)-\nabla
_{V}S(X,\nabla_{W}Y)\\
\hat{\nabla}_{W}\hat{\nabla}_{V}\hat{S}(X,Y) &  =W\hat{\nabla}_{V}\hat
{S}(X,Y)-\hat{\nabla}_{\hat{\nabla}_{W}V}\hat{S}(X,Y)-\hat{\nabla}_{V}\hat
{S}(\hat{\nabla}_{W}X,Y)-\hat{\nabla}_{V}\hat{S}(X,\hat{\nabla}_{W}Y).
\end{align*}
So
\begin{align*}
\nabla_{W}\nabla_{V}S(X,Y) &  =\hat{\nabla}_{W}\hat{\nabla}_{V}\hat
{S}(X,Y)+\hat{\nabla}_{\hat{\nabla}_{W}V}\hat{S}(X,Y)+\hat{\nabla}_{V}\hat
{S}(\hat{\nabla}_{W}X,Y)+\hat{\nabla}_{V}\hat{S}(X,\hat{\nabla}_{W}Y)\\
&  \qquad+W\left[  \hat{S}(A(V,X),Y)+S(X,A(V,Y))\right]  \\
&  \qquad-\nabla_{\nabla_{W}V}S(X,Y)-\nabla_{V}S(\nabla_{W}X,Y)-\nabla
_{V}S(X,\nabla_{W}Y)\\
&  =\hat{\nabla}_{W}\hat{\nabla}_{V}\hat{S}(X,Y)\\
&  \quad+\left[  \hat{\nabla}_{\hat{\nabla}_{W}V}\hat{S}-\nabla_{\nabla_{W}%
V}S\right]  (X,Y)\\
&  \quad+\hat{\nabla}_{V}\hat{S}(\hat{\nabla}_{W}X,Y)-\nabla_{V}S(\nabla
_{W}X,Y)\\
&  \quad+\hat{\nabla}_{V}\hat{S}(X,\hat{\nabla}_{W}Y)-\nabla_{V}S(X,\nabla
_{W}Y)\\
&  \quad+W\left[  \hat{S}(A(V,X),Y)+\hat{S}(X,A(V,Y))\right]  .
\end{align*}
Since $\hat{\nabla}_{W}V=A(W,V)$, this becomes
\begin{align*}
\nabla_{W}\nabla_{V}S(X,Y) &  =\hat{\nabla}_{W}\hat{\nabla}_{V}\hat
{S}(X,Y)+\hat{\nabla}_{A(W,V)}\hat{S}(X,Y)\\
&  \quad+\hat{\nabla}_{V}\hat{S}(A(W,X),Y)+\hat{\nabla}_{V}\hat{S}(X,A(W,Y))\\
&  \quad+W\left[  \hat{S}(A(V,X),Y)+\hat{S}(X,A(V,Y))\right]  .
\end{align*}
Expanding the last term:
\[
W\hat{S}(A(V,X),Y)=\hat{\nabla}_{W}\hat{S}(A(V,X),Y)+\hat{S}\left(  \left(
\hat{\nabla}_{W}A\right)  (V,X),Y\right)  +\hat{S}(A(V,X),\hat{\nabla}_{W}Y).
\]
Now
\[
\left(  \hat{\nabla}_{W}A\right)  (V,X)=\nabla_{W}^{\perp}A(V,X)+g^{ij}%
h\left(  (\hat{\nabla}_{W}A)(V,X),F_{j}\right)  F_{i}%
\]
and by the Codazzi equation
\[
\nabla_{W}^{\perp}A(V,X)=\nabla_{X}^{\perp}A(W,V)+g^{kl}\hat{R}(W,X,V,KF_{k}%
)KF_{l}%
\]
(here using the fact the $K$ is a sign inverting isometry to the normal space,
giving a positive sign)\ 

Also note that
\[
h\left(  (\hat{\nabla}_{W}A)(V,X),F_{j}\right)  +h\left(  A(V,X),\hat{\nabla
}_{W}F_{j}\right)  =0
\]
Thus
\begin{align*}
W\hat{S}(A(V,X),Y)  & =\hat{\nabla}_{W}\hat{S}(A(V,X),Y)\\
& +\hat{S}\left(  \nabla_{X}^{\perp}A(W,V)+g^{kl}\hat{R}(W,X,V,KF_{k}%
)KF_{l},Y\right)  -g^{ij}\hat{S}\left(  h\left(  A(V,X),\hat{\nabla}_{W}%
F_{j}\right)  F_{i},Y\right)  \\
& +\hat{S}(A(V,X),\hat{\nabla}_{W}Y).
\end{align*}

and we have
\begin{align*}
\nabla_{W}\nabla_{V}S(X,Y) &  =\hat{\nabla}_{W}\hat{\nabla}_{V}\hat
{S}(X,Y)+\hat{\nabla}_{A(W,V)}\hat{S}(X,Y)\\
&  \quad+\hat{\nabla}_{V}\hat{S}(A(W,X),Y)+\hat{\nabla}_{V}\hat{S}(X,A(W,Y))\\
&  \quad+\hat{\nabla}_{W}\hat{S}(A(V,X),Y)+\hat{\nabla}_{W}\hat{S}(X,A(V,Y))\\
&  \quad+\hat{S}(A(V,X),A(W,Y))+\hat{S}(A(W,X),A(V,Y))\\
&  \quad+\hat{S}\left(  \nabla_{X}^{\perp}A(W,V)+g^{kl}\hat{R}(W,X,V,KF_{k}%
)KF_{l},Y\right)  \\
&  \qquad\qquad-g^{ij}\hat{S}\left(  h\left(  A(V,X),A(W,F_{j})\right)
F_{i},Y\right)  \\
&  \quad+\hat{S}\!\Big(X,\left(  \hat{\nabla}_{Y}A\right)  ^{\perp}%
(W,V)+\sum\hat{R}(W,Y,V,KF_{l})KF_{l}\\
&  \qquad\qquad-g^{ij}\hat{S}\left(  X,h\left(  A(V,Y),A(W,F_{j})\right)
F_{i}\right)
\end{align*}

Tracing over $V=W=F_{m}$, we obtain
\begin{align*}
g^{mm}\nabla_{F_{m}}\nabla_{F_{m}}S(X,Y) &  =g^{mm}\hat{\nabla}_{F_{m}}%
\hat{\nabla}_{F_{m}}\hat{S}(X,Y)+g^{mm}\hat{\nabla}_{A(F_{m},F_{m})}\hat
{S}(X,Y)\\
&  \quad+g^{mm}\hat{\nabla}_{F_{m}}\hat{S}(A(F_{m},X),Y)+g^{mm}\hat{\nabla
}_{F_{m}}\hat{S}(X,A(F_{m},Y))\\
&  \quad+g^{mm}\hat{\nabla}_{F_{m}}\hat{S}(A(F_{m},X),Y)+g^{mm}\hat{\nabla
}_{F_{m}}\hat{S}(X,A(F_{m},Y))\\
&  \quad+g^{mm}2\hat{S}(A(F_{m},X),A(F_{m},Y))\\
&  \quad+g^{mm}\hat{S}\!\Big(\left(  \hat{\nabla}_{X}A\right)  ^{\perp}%
(F_{m},F_{m})+g^{kl}\hat{R}(F_{m},X,F_{m},KF_{k})KF_{l}\\
&  \qquad\qquad-A(F_{m},X)\cdot A(F_{m},F_{j})g^{ij}F_{i},\,Y\Big)\\
&  \quad+g^{mm}\hat{S}\!\Big(X,\left(  \hat{\nabla}_{Y}A\right)  ^{\perp
}(F_{m},F_{m})+g^{kl}\hat{R}(F_{m},Y,F_{m},KF_{k})KF_{l}\\
&  \qquad\qquad-A(F_{m},Y)\cdot A(F_{m},F_{j})g^{ij}F_{i}\Big).
\end{align*}

Plugging in $X=Y=F_{\maxidx}$ yields the result.

\end{proof}

Next we compute the time derivative.

\begin{proposition}
Along the generalized Lagrangian mean curvature flow,
at a point where $S$ is diagonalized, the following holds: 
\begin{align}
\frac{d}{dt}S_{\maxidx\maxidx}
& - g^{mm}\nabla_{F_{m}}\nabla_{F_{m}}S(F_{\maxidx},F_{\maxidx})  \label{evolution S}\\
=& -n\hat{\nabla}_{\left( \nabla \psi \right) ^{\perp }}S(F_{\maxidx},F_{\maxidx})
   -2n\hat{S}\left( (\hat{\nabla}_{F_{\maxidx}}\left( \nabla \psi \right) ^{\perp })^{T},
     F_{\maxidx}\right)
   -2n\hat{S}\left( \nabla_{F_{\maxidx}}^{\perp }\left( \nabla \psi \right) ^{\perp },
     F_{\maxidx}\right) \nonumber \\
& - g^{mm}\hat{\nabla}_{F_{m}}\hat{\nabla}_{F_{m}}\hat{S}(F_{\maxidx},F_{\maxidx})
  + 2\hat{S}_{\maxidx\maxidx}\left( (\hat{\nabla}_{F_{\maxidx}}\vec{H})^{T}, F_{\maxidx}\right)
  - 4 g^{mm}\hat{\nabla}_{F_{m}}\hat{S}(A(F_{m},F_{\maxidx}),F_{\maxidx}) \nonumber \\
& - 2 g^{mm}\hat{S}(A(F_{m},F_{\maxidx}),A(F_{m},F_{\maxidx}))
  - 2 g^{mm}\sum \hat{R}(F_{m},F_{\maxidx},F_{m},KF_{l})\hat{S}(KF_{l},F_{\maxidx}) \nonumber \\
& + 2 g^{mm}A(F_{m},F_{\maxidx}) \cdot A(F_{m},F_{\maxidx}) \hat{S}_{\maxidx\maxidx}. \nonumber
\end{align}
\end{proposition}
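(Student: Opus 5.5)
The evolution of $S_{\maxidx\maxidx}=\hat S(F_{\maxidx},F_{\maxidx})$ comes from differentiating in time along the normal flow. I then subtract the Laplacian formula of the preceding proposition (the one based on \cite[Proposition 1]{BLMR}).

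The flow vector is $X:=\vec H-n(\hat\nabla\psi)^\perp$, and the coordinates on $L$ are fixed as time-independent labels. Then $\frac{d}{dt}F_{\maxidx}=\hat\nabla_{F_{\maxidx}}X$, since $[\partial_t,F_{\maxidx}]=0$ and $\hat\nabla$ is torsion free. Hence
\[
\frac{d}{dt}S_{\maxidx\maxidx}=\hat\nabla_{X}\hat S(F_{\maxidx},F_{\maxidx})+2\hat S(\hat\nabla_{F_{\maxidx}}X,F_{\maxidx}).
\]
The first term splits by linearity into $\hat\nabla_{\vec H}\hat S(F_{\maxidx},F_{\maxidx})-n\hat\nabla_{(\nabla\psi)^\perp}\hat S(F_{\maxidx},F_{\maxidx})$. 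For the second term I decompose $\hat\nabla_{F_{\maxidx}}\vec H$ and $\hat\nabla_{F_{\maxidx}}(\nabla\psi)^\perp$ into tangential and normal parts. The $\psi$ contributions give the two $-2n\hat S(\cdot,F_{\maxidx})$ terms in \eqref{evolution S} directly. For $\vec H$ I keep the tangential part $(\hat\nabla_{F_{\maxidx}}\vec H)^T$ as it stands. I rewrite the normal part using Codazzi: $\nabla^\perp_{F_{\maxidx}}\vec H=g^{mm}(\hat\nabla_{F_{\maxidx}}A)^\perp(F_m,F_m)$ at a point where the frame is normal.

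Next I subtract the previous proposition with $X=Y=F_{\maxidx}$. The term $\hat\nabla_{\vec H}\hat S$ cancels against the identical term there. The normal-derivative term $2g^{mm}\hat S((\hat\nabla_{F_{\maxidx}}A)^\perp(F_m,F_m),F_{\maxidx})$ cancels against $2\hat S(\nabla^\perp_{F_{\maxidx}}\vec H,F_{\maxidx})$. The remaining terms of that proposition appear with a minus sign: the ambient Hessian, the $4\hat\nabla\hat S(A,\cdot)$ term, the $2\hat S(A,A)$ term, and the curvature term. The last term, $-2g^{mm}A(F_m,F_{\maxidx})\cdot A(F_m,F_j)g^{ij}\hat S(F_i,F_{\maxidx})$, collapses because $\hat S$ is diagonal in the chosen frame. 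Only $i=\maxidx$ survives, giving $+2g^{mm}A(F_m,F_{\maxidx})\cdot A(F_m,F_{\maxidx})\hat S_{\maxidx\maxidx}$ after the sign flip. In the same way, $\hat S((\hat\nabla_{F_{\maxidx}}\vec H)^T,F_{\maxidx})$ reduces to its $F_{\maxidx}$ component times $\hat S_{\maxidx\maxidx}$. This explains the notation $\hat S_{\maxidx\maxidx}(\cdot,\cdot)$ in the statement.

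The main point of care is consistency of conventions, not any single estimate. The time derivative is taken along the normal trajectory, so any tangential reparametrization adds only a Lie derivative term; this term vanishes at a spatial maximum or can be discarded. The normal flow vector must be differentiated as an ambient field, and $\nabla^\perp$ versus $\hat\nabla^\perp$ must be used with the sign conventions for $A$ and for the Codazzi curvature term fixed in the previous proof. Before concluding, I would check that the Codazzi identity for $\vec H$ matches exactly the normal-derivative term appearing in the Laplacian formula, including any extra curvature term. If a residual $\hat R(F_{\maxidx},F_m,F_m,\cdot)$ normal component survives, I would absorb it into the curvature term already displayed, using the Lagrangian symmetry of $a_{ijk}$.
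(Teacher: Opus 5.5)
Your proposal is correct and follows the paper's proof essentially step for step. You compute $\frac{d}{dt}S_{\maxidx\maxidx}=\hat\nabla_X\hat S(F_{\maxidx},F_{\maxidx})+2\hat S(\hat\nabla_{F_{\maxidx}}X,F_{\maxidx})$, split into tangential and normal parts, subtract the BLMR-type Laplacian identity, cancel the $\hat\nabla_{\vec H}\hat S$ and normal-derivative terms, and use the diagonal frame to collapse the $A\cdot A$ term. One small remark: $\nabla^\perp_{F_{\maxidx}}\vec H=g^{mm}(\hat\nabla_{F_{\maxidx}}A)^\perp(F_m,F_m)$ is just the trace commuting with $\nabla^\perp$, not Codazzi, which was already used in the previous proposition. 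So no residual curvature term arises, and your fallback of absorbing one into the displayed curvature term is unnecessary.
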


\begin{proof}
First, recall the formula for evolution in the normal direction $V$ 
\begin{equation*}
\frac{d}{dt}S_{ij}
  =\hat{\nabla}_{V}\hat{S}\left( F_{i},F_{j}\right)
  +\hat{S}\left( \hat{\nabla}_{F_{i}}V,F_{j}\right)
  +\hat{S}\left( F_{i},\hat{\nabla}_{F_{j}}V\right).
\end{equation*}

Let $V=\vec{H}-n\left( \nabla \psi \right) ^{\perp }.$ 
Choosing $i,j=\maxidx$ we get 
\begin{align*}
\frac{d}{dt}S_{\maxidx\maxidx}
& =\hat{\nabla}_{V}S(F_{\maxidx},F_{\maxidx})
   +2\hat{S}\left( \hat{\nabla}_{F_{\maxidx}}\left( \vec{H}
   -n\left( \hat{\nabla}\psi \right) ^{\perp }\right),F_{\maxidx}\right) \\[6pt]
&=\hat{\nabla}_{V}S(F_{\maxidx},F_{\maxidx})
  +2\hat{S}\left( (\hat{\nabla}_{F_{\maxidx}}(\vec{H}
  -n(\hat{\nabla}\psi)^{\perp}))^T,F_{\maxidx}\right)
  +2\hat{S}((\hat{\nabla}_{F_{\maxidx}}(\vec{H}
  -n(\nabla\psi)^{\perp}))^\perp ,F_{\maxidx}) \\[6pt]
&=\hat{\nabla}_{V}S(F_{\maxidx},F_{\maxidx})
  -2n\hat{S}((\hat{\nabla}_{F_{\maxidx}}(\hat{\nabla}\psi )^{\perp})^T,F_{\maxidx})
  -2n\hat{S}((\hat{\nabla}_{F_{\maxidx}}(\hat{\nabla}\psi)^{\perp})^\perp,F_{\maxidx}) \\
&\quad +2\hat{S}((\hat{\nabla}_{F_{\maxidx}}H)^T,F_{\maxidx})
      +2\hat{S}((\hat{\nabla}_{F_{\maxidx}}H)^\perp,F_{\maxidx}).
\end{align*}

Next, recall the formula
\begin{align*}
g^{mm}\nabla _{F_{m}}\nabla _{F_{m}}S(F_{\maxidx},F_{\maxidx})
& =g^{mm}\hat{\nabla}_{F_{m}}\hat{\nabla}_{F_{m}}\hat{S}(F_{\maxidx},F_{\maxidx})
  +\hat{\nabla}_{\vec{H}}\hat{S}(F_{\maxidx},F_{\maxidx}) \\
& \quad +4g^{mm}\hat{\nabla}_{F_{m}}\hat{S}(A(F_{m},F_{\maxidx}),F_{\maxidx}) \\
& \quad +2g^{mm}\hat{S}(A(F_{m},F_{\maxidx}),A(F_{m},F_{\maxidx})) \\
& \quad +2g^{mm}\hat{S}\!\left( (\hat{\nabla}_{F_{\maxidx}}A)^{\perp}
        (F_{m},F_{m}),F_{\maxidx}\right)  \\
& \quad +2g^{mm} g^{kl} \hat{R}(F_{m},F_{\maxidx},F_{m},KF_{k})\hat{S}\left(
        KF_{l},F_{\maxidx}\right)  \\
& \quad -2g^{mm}A(F_{m},F_{\maxidx})\cdot A(F_{m},F_{j})g^{ij}
            \hat{S}\left(F_{i},F_{\maxidx}\right).
\end{align*}

Compute the difference:
\begin{align*}
& \frac{d}{dt}S_{\maxidx\maxidx}
 -g^{mm}\nabla _{F_{m}}\nabla _{F_{m}}S(F_{\maxidx},F_{\maxidx}) \\
&= \hat{\nabla}_{V}S(F_{\maxidx},F_{\maxidx})
 -2n\hat{S}((\hat{\nabla}_{F_{\maxidx}}(\hat{\nabla}\psi )^{\perp })^{T},F_{\maxidx})
 -2n\hat{S}((\hat{\nabla}_{F_{\maxidx}}(\hat{\nabla}\psi )^{\perp })^{\perp},F_{\maxidx}) \\
& \quad +2\hat{S}((\hat{\nabla}_{F_{\maxidx}}\vec{H})^{T},F_{\maxidx})
       +2\hat{S}((\hat{\nabla}_{F_{\maxidx}}\vec{H})^{\perp },F_{\maxidx}) \\
& \quad -g^{mm}\hat{\nabla}_{F_{m}}\hat{\nabla}_{F_{m}}\hat{S}(F_{\maxidx},F_{\maxidx})
       -\hat{\nabla}_{H}\hat{S}(F_{\maxidx},F_{\maxidx}) \\
& \quad -4g^{mm}\hat{\nabla}_{F_{m}}\hat{S}(A(F_{m},F_{\maxidx}),F_{\maxidx}) \\
& \quad -2g^{mm}\hat{S}(A(F_{m},F_{\maxidx}),A(F_{m},F_{\maxidx})) \\
& \quad -2g^{mm}\hat{S}\!\left( (\hat{\nabla}_{F_{\maxidx}}A)^{\perp}
        (F_{m},F_{m}),F_{\maxidx}\right)  \\
& \quad -2g^{mm} g^{kl} \hat{R}(F_{m},F_{\maxidx},F_{m},KF_{k})\hat{S}(KF_{l},F_{\maxidx}) \\
& \quad +2g^{mm}A(F_{m},F_{\maxidx})\cdot A(F_{m},F_{\maxidx})\hat{S}_{\maxidx\maxidx}.
\end{align*}

Finally, collecting terms and using that \(S\) is diagonalized, also noting that 

$$ 
g^{mm}\hat{S}\!\left( (\hat{\nabla}_{F_{\maxidx}}A)^{\perp}
        (F_{m},F_{m}),F_{\maxidx}\right) = \hat{S}((\hat{\nabla}_{F_{\maxidx}}\vec{H})^{\perp },F_{\maxidx}) $$
the expression becomes
\begin{align*}
&=-n\hat{\nabla}_{(\nabla \psi )^{\perp }}S(F_{\maxidx},F_{\maxidx})
   -2n\hat{S}((\hat{\nabla}_{F_{\maxidx}}(\nabla \psi )^{\perp })^{T},F_{\maxidx})
   -2n\hat{S}(\nabla _{F_{\maxidx}}^{\perp }(\nabla \psi )^{\perp },F_{\maxidx}) \\
&\quad -g^{mm}\hat{\nabla}_{F_{m}}\hat{\nabla}_{F_{m}}\hat{S}(F_{\maxidx},F_{\maxidx}) \\
&\quad +2\hat{S}_{\maxidx\maxidx}((\hat{\nabla}_{F_{\maxidx}}H)^{T})\cdot F_{\maxidx}
       -4g^{mm}\hat{\nabla}_{F_{m}}\hat{S}(A(F_{m},F_{\maxidx}),F_{\maxidx}) \\
&\quad -2g^{mm}\hat{S}(A(F_{m},F_{\maxidx}),A(F_{m},F_{\maxidx})) \\
&\quad -2g^{mm} g^{kl}\hat{R}(F_{m},F_{\maxidx},F_{m},KF_{k})\hat{S}(KF_{l},F_{\maxidx}) \\
&\quad +2g^{mm}A(F_{m},F_{\maxidx})\cdot A(F_{m},F_{\maxidx})\hat{S}_{\maxidx\maxidx}.
\end{align*}

This completes the proof.
\end{proof}

\subsection{The Maximum Principle Applied to the Slope Ratio}


\bigskip
Suppose that $(x_0,t_0)$, with $x_0\in L$ and $t_0\in(0,T)$, is a point at which the maximum of $\mathcal{R}$ is attained. At this point and time we again choose a frame that diagonalizes the metric and the tensor $S$, so that
\[
h(F_i,F_j)=\delta_{ij},
\]
and
\[
S(F_i,F_j)=\delta_{ij}\left(\frac{1}{\lambda_i}+\lambda_i\right).
\]

Let $\maxidx=1$ or $\maxidx=n$ be chosen so that
\[
S_{\maxidx\maxidx}=\max\left\{\frac{1}{\lambda_1}+\lambda_1,\,
\frac{1}{\lambda_n}+\lambda_n\right\}.
\]
For simplicity, define the function
\[
Q(x,t)=\frac{S(F_{\maxidx},F_{\maxidx},t)}{g(F_{\maxidx},F_{\maxidx},t)}
\]
in a neighborhood of $(x_0,t_0)$. We now state the following key claim.

\begin{claim}
There exist constants $\bar C>0$ and $\kappa>0$ such that the following holds. 
If $S_{\maxidx\maxidx}\ge 10$ and $F_{\maxidx}$ is the maximizer of the ratio $\mathcal{R}(F_{\maxidx})$, then at the maximum point we have
\[
\partial_t Q(F_{\maxidx},F_{\maxidx})
\le 
\bar C\, S_{\maxidx\maxidx}^2
-
\kappa\, S_{\maxidx\maxidx}^{2+\frac{1}{n-1}}.
\]
\end{claim}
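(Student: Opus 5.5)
We will prove the claim by combining the evolution identity (\ref{evolution S}) with the maximum-principle inequality of Proposition \ref{p1}, in the spirit of \cite[Proposition 1]{BLMR}.

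\textbf{Step 1: setup at the maximum point.} At $(x_0,t_0)$ we work in the coordinates fixed above. These are normal for $m$ on $M$ and of Type I on $\bar M$. The frame is $F_i=\lambda_i^{-1/2}E_i+\lambda_i^{1/2}E_{\bar\imath}$. Since $F_\maxidx$ maximizes $\mathcal{R}$, Proposition \ref{p1} gives $\nabla_{F_m}\nabla_{F_m}S(F_\maxidx,F_\maxidx)\le 0$. Hence
\[
\partial_t Q\le \partial_t S_{\maxidx\maxidx}-g^{mm}\nabla_{F_m}\nabla_{F_m}S(F_\maxidx,F_\maxidx),
\]
where the time derivative of $g_{\maxidx\maxidx}$ is absorbed using $g_{\maxidx\maxidx}=1$ and the flow formula for $dVol_L$. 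It then suffices to bound the right side of (\ref{evolution S}) term by term in powers of $S_{\maxidx\maxidx}\approx\max(\lambda_1,\lambda_n^{-1})$.

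\textbf{Step 2: the error terms.} Because $\hat S$ and $\psi$ are smooth on the compact set avoiding $\mathscr{C}$, every term involving $\hat\nabla\hat S$, $\hat\nabla\hat\nabla\hat S$ or $\hat\nabla\psi$ contributes at most $C\,|F|_{\hat S}^2\cdot(\text{quadratic in }|F_m|_{\hat S})$. With the frame normalization, $|F_m|^2_{\hat S}\le S_{\maxidx\maxidx}$, so these terms are $\le C S_{\maxidx\maxidx}^2$.

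The terms that are linear in $A$, namely $4g^{mm}\hat\nabla_{F_m}\hat S(A(F_m,F_\maxidx),F_\maxidx)$ and the $(\hat\nabla_{F_\maxidx}\vec H)^T$ term, are handled by Cauchy--Schwarz. Each is absorbed into the positive $A$-quadratic term $2g^{mm}\hat S(A(F_m,F_\maxidx),A(F_m,F_\maxidx))$, up to a remainder of size $CS^2_{\maxidx\maxidx}$. Using the first-order condition at the maximum, $\nabla S(F_\maxidx,F_\maxidx,\cdot)=0$, the terms $A(F_m,F_\maxidx)\cdot F_\maxidx$-components are expressed through $\hat\nabla\hat S$ and bounded the same way.

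The remaining sign-indefinite quadratic term is $-2\hat S(A,A)+2|A|^2_h\hat S_{\maxidx\maxidx}$. We will show it is nonpositive to leading order. This follows from the ratio bound $\hat S(N,N)\ge S_{\maxidx\maxidx}\,|h(N,N)|$ for normal vectors $N=KF_l$, which comes from maximality together with $K$ being an $h$-anti-isometry that swaps $\lambda\leftrightarrow\lambda^{-1}$ components.

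\textbf{Step 3: the good term from MTW (main obstacle).} The decisive term is
\[
-2g^{mm}g^{kl}\hat R(F_m,F_\maxidx,F_m,KF_k)\hat S(KF_l,F_\maxidx).
\]
Diagonality leaves only $k=l=\maxidx$. We expand $F_m,F_\maxidx$ in $E,E_{\bar\cdot}$ and use Lemma \ref{KM41}, whose only nonzero curvature components are those with two barred and two unbarred indices. Writing it out, the term becomes
\[
-S_{\maxidx\maxidx}\sum_{m\ne\maxidx}\Big(\tfrac{\lambda_\maxidx}{\lambda_m}\text{ or }\tfrac{\lambda_m}{\lambda_\maxidx}\Big)\hat R(E_m\oplus0,0\oplus E_{\bar\maxidx},E_m\oplus0,0\oplus E_{\bar\maxidx}),
\]
plus lower-order terms of size $CS^2_{\maxidx\maxidx}$. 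The case is chosen according to whether $\maxidx=1$ (steep) or $\maxidx=n$ (flat). Since $h(E_m,E_{\bar\maxidx})=0$ for $m\ne\maxidx$, condition (\ref{MTW}) gives a uniform positive lower bound $\kappa_0$ on each curvature factor.

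The difficulty is to produce a large ratio $\lambda_\maxidx/\lambda_m$ for some $m$. For this we use Corollary \ref{thetamaxprinciple}: $\theta$ is bounded, so $\prod\lambda_i=\det DT$ is bounded above and below by (\ref{def:theta}). If $\lambda_1\sim S_{\maxidx\maxidx}$, then the other $n-1$ eigenvalues have product $\lesssim S^{-1}_{\maxidx\maxidx}$, so $\min_{m}\lambda_m\lesssim S_{\maxidx\maxidx}^{-1/(n-1)}$. This yields a ratio $\gtrsim S_{\maxidx\maxidx}^{1+1/(n-1)}$, and the flat case is symmetric. The good term is therefore $\le-\kappa S_{\maxidx\maxidx}^{2+1/(n-1)}$.

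The threshold $S_{\maxidx\maxidx}\ge10$ ensures that $\lambda_\maxidx$ is bounded away from $1$. This in turn guarantees that the off-diagonal cross-terms in the curvature expansion, which involve $\lambda_\maxidx^{\pm1}$ of opposite powers, are dominated. Combining Steps 2 and 3 gives the claim.
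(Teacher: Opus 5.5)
Your overall plan matches the paper's. You subtract the nonpositive quantity from Proposition~\ref{p1}, bound \eqref{evolution S} term by term, and extract $-\kappa S_{\xi\xi}^{2+\frac{1}{n-1}}$ from the MTW term using the two-sided bound on $\lambda_1\cdots\lambda_n$ from Corollary~\ref{thetamaxprinciple}. Step~3 is fine. However, Steps~1--2 mishandle two terms, and as written the estimate does not close.

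\textbf{The $g_{\xi\xi}$ term.} You cannot discard $-S_{\xi\xi}\,\partial_t g_{\xi\xi}$ ``using $g_{\xi\xi}=1$ and the flow formula for $dVol_L$''. Knowing $g_{\xi\xi}(t_0)=1$ says nothing about $\partial_t g_{\xi\xi}$, and the volume evolution only sees the trace $g^{ij}\partial_t g_{ij}$. Nor can you absorb $2\hat S_{\xi\xi}\,h\big((\hat\nabla_{F_\xi}\vec H)^T,F_\xi\big)$ by Cauchy--Schwarz. This term equals $-2S_{\xi\xi}\,h\big(\vec H,A(F_\xi,F_\xi)\big)$, which is quadratic in $A$, not linear. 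Moreover, $\vec H$ contains the components $h(A(F_m,F_m),KF_l)$ with $m,l\neq\xi$. None of the good terms control these, since the good terms involve only $A(F_m,F_\xi)$, and no bound on $\nabla\theta$ is available at this stage.

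The missing observation is an exact cancellation (the paper's Line~2). With $V=\vec H-n(\hat\nabla\psi)^\perp$ one has $\partial_t g_{\xi\xi}=2h(\hat\nabla_{F_\xi}V,F_\xi)$. Because $\hat S$ is diagonal in the frame, $2\hat S\big((\hat\nabla_{F_\xi}V)^T,F_\xi\big)=2S_{\xi\xi}\,h(\hat\nabla_{F_\xi}V,F_\xi)=S_{\xi\xi}\,\partial_t g_{\xi\xi}$. So the whole tangential part of $\partial_t S_{\xi\xi}$, both the $\vec H$ and the $\psi$ pieces, cancels against $-S_{\xi\xi}\partial_t g_{\xi\xi}$.

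\textbf{The sign of the $h(A,A)$ term.} The term $2\sum_m h\big(A(F_m,F_\xi),A(F_m,F_\xi)\big)\hat S_{\xi\xi}$ is not sign-indefinite. $A$ is normal, and the normal bundle of a spacelike $n$-dimensional submanifold of the $(n,n)$ metric is negative definite. So this term is $\le 0$ and is simply dropped (the paper's Line~1).

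The inequality you invoke for it is in fact reversed. We have $\hat S(KF_l,KF_l)=\lambda_l^{-1}+\lambda_l=S_{ll}$, since $K$ does not swap $\lambda_l$ and $\lambda_l^{-1}$. Also $\hat S$ is diagonal in $\{KF_l\}$ at the point. Maximality then gives $\hat S(N,N)\le S_{\xi\xi}\,|h(N,N)|$ for every normal $N$. So with the sign you assign, your combination $-2\hat S(A,A)+2|A|_h^2\hat S_{\xi\xi}$ would be $\ge 0$. You would also be spending $-2\hat S(A,A)$ twice, since it is needed to absorb $-4\sum_m\hat\nabla_{F_m}\hat S(A(F_m,F_\xi),F_\xi)$.

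\textbf{The first-order condition.} Your condition $0=\hat\nabla_{F_k}\hat S(F_\xi,F_\xi)+2\hat S(A(F_k,F_\xi),F_\xi)$ is the right tool. After dividing by $|\lambda_\xi^{-1}-\lambda_\xi|\ge cS_{\xi\xi}$, it gives $|h(A(F_\xi,F_\xi),KF_k)|\le CS_{\xi\xi}^{1/2}$. This is what the threshold $S_{\xi\xi}\ge 10$ is for; the cross-terms in Claim~\ref{a67} cancel identically and need no threshold. You should attach the condition specifically to $-2n\hat S\big(\nabla^\perp_{F_\xi}(\hat\nabla\psi)^\perp,F_\xi\big)$. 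That term is not an ambient quantity, because differentiating the normal projection produces $h\big(\hat\nabla\psi,KA(F_\xi,F_\xi)\big)$. The paper handles this term in graph coordinates in Claim~\ref{annoying}.

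With these three repairs, your outline gives the claim.
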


\begin{proof}
We start by computing
\[
\partial_{t}Q(F_{\maxidx},F_{\maxidx})=\partial_{t}\left(  \frac{S(F_{\maxidx},F_{\maxidx},t)}%
{g(F_{\maxidx},F_{\maxidx},t)}\right)  \Big|_{x_{0},t_{0}}=S_{\maxidx\maxidx,t}-S_{\maxidx\maxidx}g_{\maxidx\maxidx,t}.
\]
We subtract the {nonpositive} quantity (nonpositive by Proposition \ref{p1}) to obtain
\begin{align*}
\partial_{t}Q(F_{\maxidx},F_{\maxidx})  & \leq\partial_{t}Q(F_{\maxidx},F_{\maxidx})-\sum_{i}%
\nabla_{F_{i}}\nabla_{F_{i}}S(F_{\maxidx},F_{\maxidx})\\
&  =S_{\maxidx\maxidx,t}-\sum_{i}\nabla_{F_{i}}\nabla_{F_{i}}S(F_{\maxidx},F_{\maxidx})-S_{\maxidx\maxidx}%
g_{\maxidx\maxidx,t}.
\end{align*}

Next, we plug in \eqref{evolution S}:
\begin{align*}
\partial_{t}Q(F_{\maxidx},F_{\maxidx})
& \leq -\hat{\nabla}_{n\left( \nabla\psi\right)^{\perp}}
      S(F_{\maxidx},F_{\maxidx})
      -2\hat{S}\left( \nabla_{F_{\maxidx}}^{\perp} n\left(\nabla\psi\right)^{\perp},
      F_{\maxidx}\right)
      -\sum_{i}\hat{\nabla}_{F_{i}}\hat{\nabla}_{F_{i}}
      \hat{S}(F_{\maxidx},F_{\maxidx})\\
& \quad -4\sum_{i}\hat{\nabla}_{F_{i}}
      \hat{S}(A(F_{i},F_{\maxidx}),F_{\maxidx})\\
& \quad +2\hat{S}_{\maxidx\maxidx}\left(
        (\hat{\nabla}_{F_{\maxidx}}\vec{H})^{T}\cdot F_{\maxidx}\right)
      -2\hat{S}\left(
        (\hat{\nabla}_{F_{\maxidx}}n\left( \nabla\psi\right)^{\perp})^{T},
        F_{\maxidx}\right)
      -S_{\maxidx\maxidx} g_{\maxidx\maxidx,t}\\
& \quad -2\sum_{i}\hat{S}(A(F_{i},F_{\maxidx}),A(F_{i},F_{\maxidx}))
      -2\sum_{i,l}\hat{R}(F_{i},F_{\maxidx},F_{i},KF_{l})
         \hat{S}(KF_{l},F_{\maxidx})\\
& \quad +2\sum_{i}
      A(F_{i},F_{\maxidx})\cdot A(F_{i},F_{\maxidx})\hat{S}_{\maxidx\maxidx}.
\end{align*}
We rearrange the above quantities as follows:\begin{align*}
&\quad +2\sum_{i}A(F_{i},F_{\maxidx})\cdot A(F_{i},F_{\maxidx})\hat{S}_{\maxidx\maxidx}
\tag{1}\label{eq:L1}\\
&\quad +2\hat{S}_{\maxidx\maxidx}\left( (\hat{\nabla}_{F_{\maxidx}}\vec{H})^{T}\cdot F_{\maxidx}\right)
   -2\hat{S}\left( (\hat{\nabla}_{F_{\maxidx}}n(\nabla\psi)^{\perp})^{T},F_{\maxidx}\right)
   -S_{\maxidx\maxidx}g_{\maxidx\maxidx,t}
\tag{2}\label{eq:L2}\\
&\quad -4\sum_{i}\hat{\nabla}_{F_{i}}\hat{S}(A(F_{i},F_{\maxidx}),F_{\maxidx})
   \;-\;2\sum_{i}\hat{S}(A(F_{i},F_{\maxidx}),A(F_{i},F_{\maxidx}))
\tag{3}\label{eq:L3}\\
&\quad -\hat{\nabla}_{n(\nabla\psi)^{\perp}}S(F_{\maxidx},F_{\maxidx})
   -\sum_{i}\hat{\nabla}_{F_{i}}\hat{\nabla}_{F_{i}}\hat{S}(F_{\maxidx},F_{\maxidx})
\tag{4}\label{eq:L4}\\
&\quad -2\hat{S}\left( \nabla_{F_{\maxidx}}^{\perp}n(\nabla\psi)^{\perp},F_{\maxidx}\right)
\tag{5}\label{eq:L5}\\
&\quad -2\sum_{i,l}\hat{R}(F_{i},F_{\maxidx},F_{i},KF_{l})\hat{S}(KF_{l},F_{\maxidx})
\tag{6}\label{eq:L6}
\end{align*}

We will deal with these lines in order. \ For lines 1-5 we will show that each
expression is bounded by%
\[
C(1+S_{\maxidx\maxidx}^{2})
\]
for some controlled constant $C.$

\textit{Line 1:}\ \ \ Note that this is the sum of metric pairings of two time-like
vectors, so this is non positive.

\textit{Line 2:} Note that
\begin{align*}
\frac{d}{dt}g_{\maxidx\maxidx} 
&=2h(\hat{\nabla}_{F_{\maxidx}}\left(  \vec{H}-n\left(
\nabla\psi\right)  ^{\perp}\right)  ,F_{\maxidx})\\
&=2(\hat{\nabla}_{F_{\maxidx}}\vec{H})^{T}\cdot F_{\maxidx}
   -2\left(  \hat{\nabla}%
_{F_{\maxidx}}n\left(  \nabla\psi\right)  ^{\perp}\right)  ^{T}\cdot F_{\maxidx}
\end{align*}
and that
\[
\hat{S}\left(  (\hat{\nabla}_{F_{\maxidx}}n\left(  \nabla\psi\right)  ^{\perp}%
)^{T},F_{\maxidx}\right)
=(\hat{\nabla}_{F_{\maxidx}}n\left(  \nabla\psi\right)  ^{\perp})^{T}
\cdot F_{\maxidx}\, S(F_{\maxidx},F_{\maxidx}).
\]

So
\begin{align*}
&\quad 
+2\hat{S}_{\maxidx\maxidx}\left( (\hat{\nabla}_{F_{\maxidx}}\vec{H})^{T}\!\cdot F_{\maxidx}\right)
-2\hat{S}\!\left( (\hat{\nabla}_{F_{\maxidx}} n(\nabla\psi)^{\perp})^{T},\,F_{\maxidx}\right)
-S_{\maxidx\maxidx} g_{\maxidx\maxidx,t}
\\[0.5em]
&= 
2\hat{S}_{\maxidx\maxidx}\left( (\hat{\nabla}_{F_{\maxidx}}\vec{H})^{T}\!\cdot F_{\maxidx}\right)
-2\hat{S}\!\left( (\hat{\nabla}_{F_{\maxidx}} n(\nabla\psi)^{\perp})^{T},\,F_{\maxidx}\right)
\\
&\quad 
-S_{\maxidx\maxidx} 2(\hat{\nabla}_{F_{\maxidx}}\vec{H})^{T}\!\cdot F_{\maxidx}
+S_{\maxidx\maxidx} 2\left( \hat{\nabla}_{F_{\maxidx}} n(\nabla\psi)^{\perp} \right)^{\perp}\!\cdot F_{\maxidx}
\\[0.5em]
&=0.
\end{align*}

\textit{Line 3:} Note that as a fixed tensor on the ambient manifold, we have
\[
\hat{\nabla}_{F_{i}}\hat{S}(X,Y)
\leq C_{1}\left\Vert F_{i}\right\Vert
_{\hat{S}}\left\Vert X\right\Vert _{\hat{S}}\left\Vert Y\right\Vert _{\hat{S}},
\]
so
\[
-4\sum_{i}\hat{\nabla}_{F_{i}}\hat{S}(A(F_{i},F_{\maxidx}),F_{\maxidx})
\leq
2\sum_{i}\hat{S}(A(F_{i},F_{\maxidx}),A(F_{i},F_{\maxidx}))
+2\sum_{i}C_{1}^{2}\hat{S}(F_{i},F_{i})\hat{S}(F_{\maxidx},F_{\maxidx})
\]

hence
\begin{align*}
-4\sum_{i}\hat{\nabla}_{F_{i}}\hat{S}(A(F_{i},F_{\maxidx}),F_{\maxidx}
-2\sum_{i}\hat{S}(A(F_{i},F_{\maxidx}),A(F_{i},F_{\maxidx}))  
&\leq 2\sum_{i}C_{1}^{2}\hat{S}(F_{i},F_{i})\hat{S}(F_{\maxidx},F_{\maxidx})\\
&\leq C S_{\maxidx\maxidx}^{2}.
\end{align*}

\textit{Line 4:}
\[
-\hat{\nabla}_{n\left(  \nabla\psi\right)  ^{\perp}}
S(F_{\maxidx},F_{\maxidx})
-\sum_{i}\hat{\nabla}_{F_{i}}\hat{\nabla}_{F_{i}}\hat{S}(F_{\maxidx},F_{\maxidx}).
\]
Note that the first term involves a projection, so
\[
\left\Vert n\left(  \nabla\psi\right)  ^{\perp}\right\Vert _{\hat{S}}
=\left\Vert
h(n\left(  \nabla\psi\right)  ^{\perp},KF_{l})g^{lm}KF_{m}\right\Vert_{\hat{S}} 
\leq S_{\maxidx\maxidx}.
\]
This shows that the first term is bounded by $S_{\maxidx\maxidx}^{2}.$ 
The second is an ambient 4 tensor, so is bounded by$S_{\maxidx\maxidx}^{2}.$\\

\textit{Line 5:} This is Claim \ref{annoying} below.

We finally claim that the contribution from line 6 is bounded above by
\[
-\kappa S_{\maxidx\maxidx}^{2+\frac{1}{n-1}} + C S_{\maxidx\maxidx}^{2},
\]
for some constant $\kappa>0$, bounded away from zero, 
depending on the cross-curvature condition.

To set this up, recall
\[
m+\bar{m}=\delta_{ij}+\delta_{\bar{\imath}
\bar{j}}
\]
at the point, and the orthonormal basis with respect to $g$ has the
expression
\[
F_i=\frac{1}{\sqrt{\lambda_{i}}}E_{i}
  +\sqrt{\lambda_{i}}E_{\bar{\imath}}.
\]
Next, we observe the term
\[
\sum_{i,l}\hat{R}(F_{i},F_{\maxidx},F_{i},KF_{l})\hat{S}(KF_{l},F_{\maxidx})
\]
and note that
\[
\hat{S}(KF_{l},F_{\maxidx})
=\left(  \frac{1}{\lambda_{\maxidx}}-\lambda_{\maxidx}\right)\delta_{l\maxidx}.
\]
and 
\[
\hat{R}(F_{i},F_{\maxidx},F_{i},KF_{\maxidx})
=-\frac{\lambda_{\maxidx}}{\lambda_{i}}\hat{R}_{i\overline{\maxidx} i\overline{\maxidx}}
+\frac{\lambda_{i}}{\lambda_{\maxidx}}\hat{R}_{\overline{\imath}\maxidx\overline{\imath}\maxidx}.
\]

Thus (see Claim \ref{a67} below) 
\begin{align*}
\sum_{i,l}\hat{R}(F_{i},F_{\maxidx},F_{i},KF_{l})\hat{S}(KF_{l},F_{\maxidx})
&=\sum_{i}\left(
  -\frac{\lambda_{\maxidx}}{\lambda_{i}}\hat{R}_{i\overline{\maxidx}i\overline{\maxidx}}
  +\frac{\lambda_{i}}{\lambda_{\maxidx}}\hat{R}_{\overline{\imath}\maxidx\overline{\imath}\maxidx}
\right)
\left(  \frac{1}{\lambda_{\maxidx}}-\lambda_{\maxidx}\right)  \\
&=-\sum_{i}\frac{1}{\lambda_{i}}\hat{R}_{i\overline{\maxidx}i\overline{\maxidx}}
  +\lambda_{\maxidx}^{2}\sum_{i}\frac{1}{\lambda_{i}}\hat{R}_{i\overline{\maxidx}i\overline{\maxidx}}\\
&\qquad+\frac{1}{\lambda_{\maxidx}^{2}}\sum_{i}\lambda_{i}\hat{R}_{\overline{\imath}%
\maxidx\overline{\imath}\maxidx}-\sum_{i}\lambda_{i}\hat{R}_{\overline{\imath}\maxidx\overline{\imath}\maxidx}.
\end{align*}

If $\maxidx=1$ then the term
\[
\lambda_{1}^{2} \sum_{i}\frac{1}{\lambda_{i}}\hat{R}_{i\bar{1}i\bar{1}}
\]
dominates. Recall (here we use Corollary \ref{thetamaxprinciple}) 
\[
\det DT=\lambda_{1}\cdots\lambda_{n}
=\frac{e^{-2\theta}\rho}{\bar{\rho}}
\leq\Lambda_{0}
\]
so
\[
\Lambda_{0}\geq\lambda_{1}(\lambda_{n})^{\,n-1}
\]
and hence
\[
\frac{1}{\lambda_{n}}
\geq\left(  \frac{\lambda_{1}}{\Lambda_{0}}\right)^{\!\frac{1}{n-1}}.
\]

Thus
\[
\lambda_{1}^{2}\sum_{i}\frac{1}{\lambda_{i}}\hat{R}_{i\bar{1}i\bar{1}}
\geq
\lambda_{1}^{2}\left(  \frac{\lambda_{1}}{\Lambda_{0}}\right)^{\!\frac{1}{n-1}}
\hat{R}_{n\bar{1}n\bar{1}}
\geq
\kappa S_{\maxidx\maxidx}^{2+\frac{1}{n-1}}-C.
\]

Of the remaining terms,
\[
\frac{1}{\lambda_{\maxidx}^{2}}\sum_{i}\lambda_{i}\hat{R}_{\overline{\imath}\maxidx\overline{\imath}\maxidx}
\]
is positive, and the other two are bounded by $\sum_{i}S_{ii}.$

If $\maxidx=n,$ then
\[
\det DT=\lambda_{1}\cdots\lambda_{n}
=\frac{e^{-2\theta}\rho}{\bar{\rho}}
\geq\frac{1}{\Lambda_{0}}
\]
so
\[
\lambda_{1}\geq
\frac{1}{\Lambda_{0}(\lambda_{n})^{\,n-1}}
\]
and
\begin{align*}
\frac{1}{\lambda_{n}^{2}}\sum_{i}\lambda_{i}\hat{R}_{\bar{\imath}n\bar{\imath}n}
&\geq
\frac{1}{\lambda_{n}^{2}}
\frac{1}{\Lambda_{0}(\lambda_{n})^{\,n-1}}
\hat{R}_{\bar{1}n\bar{1}n}\\
&\geq
\kappa S_{\maxidx\maxidx}^{2+\frac{1}{n-1}}-C.
\end{align*}

\end{proof}

\begin{claim}\label{a67}
\begin{equation*}
\hat{R}(F_{i},F_{\maxidx},F_{i},KF_{\maxidx})
= -\frac{\lambda_{\maxidx}}{\lambda_{i}} \hat{R}_{i\overline{\maxidx} i\overline{\maxidx}}
  + \frac{\lambda_{i}}{\lambda_{\maxidx}} \hat{R}_{\overline{i} \maxidx \overline{i} \maxidx}.
\end{equation*}
\end{claim}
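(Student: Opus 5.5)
The plan is a direct multilinear expansion of $\hat R$ in the basis $\{E_i,E_{\bar\imath}\}$ fixed at the maximum point. There $F_i=\lambda_i^{-1/2}E_i+\lambda_i^{1/2}E_{\bar\imath}$. Since $K=I\oplus(-I)$ by (\ref{defineK}), we get
\[
KF_{\maxidx}=\lambda_{\maxidx}^{-1/2}E_{\maxidx}-\lambda_{\maxidx}^{1/2}E_{\overline{\maxidx}}.
\]
Substituting these into $\hat R(F_i,F_{\maxidx},F_i,KF_{\maxidx})$ produces sixteen terms. I will discard most of them using the structure of the curvature tensor in Lemma \ref{KM41}.

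By Lemma \ref{KM41}, a component can be nonzero only when it has exactly two barred and two unbarred indices. Moreover $\hat R_{ij\bar k\bar l}=0$, and by antisymmetry in each pair so is $\hat R_{\bar k\bar l ij}$. So the only surviving terms are those in which each of the two slot pairs $(1,2)$ and $(3,4)$ contains one barred and one unbarred vector. In the first pair $(F_i,F_{\maxidx})$ the mixed choices are:
\begin{itemize}
\item $(E_i,E_{\overline{\maxidx}})$ with coefficient $\lambda_i^{-1/2}\lambda_{\maxidx}^{1/2}$;
\item $(E_{\bar\imath},E_{\maxidx})$ with coefficient $\lambda_i^{1/2}\lambda_{\maxidx}^{-1/2}$.
\end{itemize}
In the second pair $(F_i,KF_{\maxidx})$ the mixed choices are:
\begin{itemize}
\item $(E_i,E_{\overline{\maxidx}})$ with coefficient $-\lambda_i^{-1/2}\lambda_{\maxidx}^{1/2}$;
\item $(E_{\bar\imath},E_{\maxidx})$ with coefficient $\lambda_i^{1/2}\lambda_{\maxidx}^{-1/2}$.
\end{itemize}

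This leaves four terms. Pairing like with like gives $-\frac{\lambda_{\maxidx}}{\lambda_i}\hat R_{i\overline{\maxidx}i\overline{\maxidx}}$ and $+\frac{\lambda_i}{\lambda_{\maxidx}}\hat R_{\bar\imath\maxidx\bar\imath\maxidx}$, which are exactly the two claimed terms. The two cross terms are $+\hat R_{i\overline{\maxidx}\,\bar\imath\maxidx}$ and $-\hat R_{\bar\imath\maxidx\, i\overline{\maxidx}}$, each with coefficient of absolute value one. By the pair symmetry $\hat R(a,b,c,d)=\hat R(c,d,a,b)$ these are equal, so the cross terms cancel and the identity follows. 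The diagonal case $i=\maxidx$ requires no separate treatment, since the same expansion applies verbatim.

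The only point needing care, and the one I expect to be the main obstacle, is sign bookkeeping. This covers the sign of $K$ on the barred factor and consistency with the curvature convention used in Lemma \ref{KM41} and Corollary \ref{IIcoords}. The cancellation of the cross terms uses only pair symmetry, so it is robust under either convention. I will double-check the overall signs against the $(i,\overline{\maxidx})$ components appearing in the MTW form (\ref{MTW}), so that the subsequent positivity argument in the preceding claim is applied with the correct sign.
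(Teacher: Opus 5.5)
Your proposal is correct and is essentially the paper's own proof. The paper expands $\hat{R}(F_i,F_{\maxidx},F_i,KF_{\maxidx})$ with $F_i=\lambda_i^{-1/2}E_i+\lambda_i^{1/2}E_{\bar\imath}$ and $KF_{\maxidx}=\lambda_{\maxidx}^{-1/2}E_{\maxidx}-\lambda_{\maxidx}^{1/2}E_{\overline{\maxidx}}$, keeps the same four mixed-pair terms, and cancels the cross terms $\hat{R}_{i\overline{\maxidx}\bar\imath\maxidx}-\hat{R}_{\bar\imath\maxidx i\overline{\maxidx}}$ by pair symmetry; one small correction is that $\hat{R}_{\bar k\bar l ij}=0$ follows from $\hat{R}_{ij\bar k\bar l}=0$ by pair-interchange symmetry, not by antisymmetry within each pair.
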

\begin{proof}
    
This can be verified by expanding 
\begin{eqnarray*}
&&\hat{R}\!\left(
\frac{1}{\sqrt{\lambda_{i}}}E_{i}+\sqrt{\lambda_{i}}E_{\bar{i}},
\frac{1}{\sqrt{\lambda_{\maxidx}}}E_{\maxidx}+\sqrt{\lambda_{\maxidx}}E_{\overline{\maxidx}},
\frac{1}{\sqrt{\lambda_{i}}}E_{i}+\sqrt{\lambda_{i}}E_{\bar{i}},
\frac{1}{\sqrt{\lambda_{\maxidx}}}E_{\maxidx}-\sqrt{\lambda_{\maxidx}}E_{\overline{\maxidx}}
\right)
\\
&=& \hat{R}\!\left(
\frac{1}{\sqrt{\lambda_{i}}}E_{i},\sqrt{\lambda_{\maxidx}}E_{\overline{\maxidx}},
\frac{1}{\sqrt{\lambda_{i}}}E_{i},-\sqrt{\lambda_{\maxidx}}E_{\overline{\maxidx}}
\right)
\\
&&+ \hat{R}\!\left(
\frac{1}{\sqrt{\lambda_{i}}}E_{i},\sqrt{\lambda_{\maxidx}}E_{\overline{\maxidx}},
\sqrt{\lambda_{i}}E_{\bar{i}},\frac{1}{\sqrt{\lambda_{\maxidx}}}E_{\maxidx}
\right)
\\
&&+ \hat{R}\!\left(
\sqrt{\lambda_{i}}E_{\bar{i}},\frac{1}{\sqrt{\lambda_{\maxidx}}}E_{\maxidx},
\frac{1}{\sqrt{\lambda_{i}}}E_{i},-\sqrt{\lambda_{\maxidx}}E_{\overline{\maxidx}}
\right)
\\
&&+ \hat{R}\!\left(
\sqrt{\lambda_{i}}E_{\bar{i}},\frac{1}{\sqrt{\lambda_{\maxidx}}}E_{\maxidx},
\sqrt{\lambda_{i}}E_{\bar{i}},\frac{1}{\sqrt{\lambda_{\maxidx}}}E_{\maxidx}
\right)
\\
&=& -\frac{\lambda_{\maxidx}}{\lambda_{i}} \hat{R}_{i\overline{\maxidx} i\overline{\maxidx}}
    + \hat{R}_{i\overline{\maxidx}\bar{i}\maxidx}
    - \hat{R}_{\bar{i}\maxidx i\overline{\maxidx}}
    + \frac{\lambda_{i}}{\lambda_{\maxidx}} \hat{R}_{\bar{i}\maxidx\bar{i}\maxidx}
\\
&=& -\frac{\lambda_{\maxidx}}{\lambda_{i}} \hat{R}_{i\overline{\maxidx} i\overline{\maxidx}}
    + \frac{\lambda_{i}}{\lambda_{\maxidx}} \hat{R}_{\bar{i}\maxidx\bar{i}\maxidx}.
\end{eqnarray*}

\end{proof}

\begin{claim} \label{annoying}%
\[
2\hat{S}\left(  \nabla_{F_{\maxidx}}^{\perp}n\left(  \nabla\psi\right)  ^{\perp}
,F_{\maxidx}\right)  \leq\left\Vert F_{\maxidx}\right\Vert _{S}\left\Vert \left(
\hat{\nabla}_{F_{\maxidx}}n\left(  \nabla\psi\right)  ^{\perp}\right)  ^{\perp}
\right\Vert _{\hat{S}}.
\]
\end{claim}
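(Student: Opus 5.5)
The plan is to compute both sides explicitly in the frame fixed at the maximum point. There $\hat S$ is the identity in the coordinate basis $\{E_i,E_{\bar\imath}\}$, and
\[
F_i=\lambda_i^{-1/2}E_i+\lambda_i^{1/2}E_{\bar\imath},\qquad KF_i=\lambda_i^{-1/2}E_i-\lambda_i^{1/2}E_{\bar\imath}.
\]
The bound then reduces to a one-variable inequality in $\lambda_{\maxidx}$. Cauchy--Schwarz alone cannot give it: in the frame, $|\hat S(KF_{\maxidx},F_{\maxidx})|/(\|KF_{\maxidx}\|_{\hat S}\|F_{\maxidx}\|_{\hat S})=|\lambda_{\maxidx}^{-1}-\lambda_{\maxidx}|/(\lambda_{\maxidx}^{-1}+\lambda_{\maxidx})\to1$ as $\lambda_{\maxidx}\to\infty$. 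So the factor $2$ must be paid by growth of $S_{\maxidx\maxidx}$. I therefore read $\|F_{\maxidx}\|_S$ as $S(F_{\maxidx},F_{\maxidx})=S_{\maxidx\maxidx}$, which is the normalization the surrounding estimate uses when it aims for a $C S_{\maxidx\maxidx}^2$ bound. I also use the standing hypothesis $S_{\maxidx\maxidx}\ge 10$ of the preceding claim. This reading is an assumption on my part: with $\|F_{\maxidx}\|_S=S_{\maxidx\maxidx}^{1/2}$ the inequality as worded is false for large $\lambda_{\maxidx}$.

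\emph{Step 1 (decomposition of the normal vector).} Put $N=(\hat\nabla_{F_{\maxidx}}n(\nabla\psi)^\perp)^\perp$. Since $L$ is Lagrangian, $\{KF_l\}$ spans the normal space, so we may write $N=\sum_l a_lKF_l$. Using $\hat S=\mathrm{Id}$ in the $E$-basis, one checks directly:
\begin{itemize}
\item the $KF_l$ are $\hat S$-orthogonal, with $\hat S(KF_l,KF_l)=\lambda_l^{-1}+\lambda_l$;
\item $\hat S(KF_l,F_{\maxidx})=\delta_{l\maxidx}(\lambda_{\maxidx}^{-1}-\lambda_{\maxidx})$, as already recorded in the proof of the preceding claim.
\end{itemize}
Consequently
\[
\hat S(N,F_{\maxidx})=a_{\maxidx}(\lambda_{\maxidx}^{-1}-\lambda_{\maxidx}),\qquad
\|N\|_{\hat S}^2=\sum_l a_l^2(\lambda_l^{-1}+\lambda_l)\ \ge\ a_{\maxidx}^2(\lambda_{\maxidx}^{-1}+\lambda_{\maxidx})=a_{\maxidx}^2S_{\maxidx\maxidx}.
\]

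\emph{Step 2 (scalar inequality).} From Step 1 it suffices to show
\[
2|a_{\maxidx}|\,|\lambda_{\maxidx}^{-1}-\lambda_{\maxidx}|\ \le\ S_{\maxidx\maxidx}\cdot|a_{\maxidx}|\,S_{\maxidx\maxidx}^{1/2}.
\]
Since $|\lambda^{-1}-\lambda|\le\lambda^{-1}+\lambda=S_{\maxidx\maxidx}$, this follows from $2\le S_{\maxidx\maxidx}^{1/2}$, which holds because $S_{\maxidx\maxidx}\ge 10$. If $S_{\maxidx\maxidx}<10$ there is nothing to prove for the purposes of the preceding claim, since that case is harmless. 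Combining, $2\hat S(N,F_{\maxidx})\le S_{\maxidx\maxidx}\|N\|_{\hat S}$, which is the stated inequality.

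\emph{Step 3 (use in Line 5).} To feed this into Line 5, I would bound $\|N\|_{\hat S}\le C(1+S_{\maxidx\maxidx})$. First, $\psi$ is $C^2$ on the compact region avoiding the cut locus. Second, the normal projection is controlled exactly as in Line 4: it costs at most a factor $S_{\maxidx\maxidx}$. Third, $\|F_{\maxidx}\|_{\hat S}\le S_{\maxidx\maxidx}^{1/2}$. These give the $C S_{\maxidx\maxidx}^2$ contribution.

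The main obstacle is Step 2. It hinges entirely on the normalization of $\|F_{\maxidx}\|_S$ and on the largeness hypothesis $S_{\maxidx\maxidx}\ge 10$, and it is the only place where the factor $2$ is genuinely removed rather than absorbed. Step 1 is routine once the frame from the maximum-point construction is fixed.
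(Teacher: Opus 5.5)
You are right that the display cannot be a literal Cauchy--Schwarz inequality with $\|F_{\maxidx}\|_S=S_{\maxidx\maxidx}^{1/2}$. The paper's proof does not establish it in that form either. It ends instead with the bound Line 5 actually needs: $\hat S(N,F_{\maxidx})\le\|nD\psi\|_{\hat S}\|F_{\maxidx}\|_{\hat S}^2+C(1+S_{\maxidx\maxidx}^2)$, where $N=(\hat\nabla_{F_{\maxidx}}n(\nabla\psi)^\perp)^\perp$. Under your reading, Steps 1--2 are correct but reduce to Cauchy--Schwarz plus $S_{\maxidx\maxidx}\ge 4$. All the content is pushed into Step 3, and Step 3 fails.

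The bound $\|N\|_{\hat S}\le C(1+S_{\maxidx\maxidx})$ does not follow from $\psi\in C^2$ and the Line 4 projection estimate. The reason is that $N$ is the derivative of a \emph{projected} field, so the derivative falls on the projection itself. Write $(n\nabla\psi)^\perp=-(n\nabla\psi\cdot KF_i)g^{ij}KF_j$ and differentiate at the point, where $\hat\nabla_{F_{\maxidx}}F_j=A(F_{\maxidx},F_j)$ in normal coordinates. The term containing $K\hat\nabla_{F_{\maxidx}}F_j$ is tangential and drops. However, $F_{\maxidx}(n\nabla\psi\cdot KF_j)$ contains $n\nabla\psi\cdot KA(F_{\maxidx},F_j)$ for every $j$, and these are second derivatives of $T$. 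At this stage you are proving a $C^1$ estimate for $T$, before any higher regularity, so nothing bounds $A$ by $S_{\maxidx\maxidx}$. Hence $\|N\|_{\hat S}$ is genuinely uncontrolled, and your route cannot close Line 5.

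The missing idea is to bound only the part of $N$ that is paired with $F_{\maxidx}$. Your own Step 1 shows $\hat S(N,F_{\maxidx})=a_{\maxidx}(\lambda_{\maxidx}^{-1}-\lambda_{\maxidx})$, so what you need is a bound on $a_{\maxidx}$, not on $\|N\|_{\hat S}$. Up to sign and a term bounded by $CS_{\maxidx\maxidx}$, $a_{\maxidx}$ equals $n\nabla\psi\cdot K\hat\nabla_{F_{\maxidx}}F_{\maxidx}$, which involves only $A(F_{\maxidx},F_{\maxidx})$. By total symmetry of $a_{ijk}$ on a Lagrangian, its components are $A(\widetilde F_{\maxidx},\widetilde F_k)\cdot K\widetilde F_{\maxidx}$, that is, $T^{\maxidx}_{\maxidx k}$ plus Christoffel terms.

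The paper controls exactly these components through the \emph{first-order} condition at the maximum point. The $E_k$-derivative of $S(\widetilde F_{\maxidx},\widetilde F_{\maxidx})/h(\widetilde F_{\maxidx},\widetilde F_{\maxidx})$ vanishes for every $k$. This lets one solve for $T^{\maxidx}_{\maxidx k}$ in terms of $c_{\maxidx\overline{\maxidx}k}$ and $\bar m_{\overline{\maxidx}\,\overline{\maxidx},k}$, with a denominator $(1-\lambda_{\maxidx})^2$ kept away from zero by $S_{\maxidx\maxidx}\ge 10$. The result is $|n\nabla\psi\cdot K\hat\nabla_{F_{\maxidx}}F_{\maxidx}|\le C(1+\lambda_1/\lambda_n)\le C(1+S_{\maxidx\maxidx}^2)$. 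Your proposal never uses this critical-point condition, and without it there is no control of the second-fundamental-form term.
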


\begin{proof}
Unwinding
\[
-n\left(  \nabla\psi\right)  ^{\perp}
=\left(  n\left(  \nabla\psi\right)  \cdot KF_{i}\right)  g^{ij}KF_{j}
\]
and
\begin{align*}
&\hat{\nabla}_{F_{\maxidx}}\left[\left(  n\left(  \nabla\psi\right)  \cdot KF_{i}\right)
g^{ij}KF_{j}\right]\\
& =F_{\maxidx}\left(  \left(  n\left(  \nabla\psi\right)  \cdot KF_{i}\right)
g^{ij}\right)  KF_{j}\\
& \quad +\left(  n\left(  \nabla\psi\right)  \cdot KF_{i}\right)  g^{ij}K\hat
{\nabla}_{F_{\maxidx}}F_{j}.
\end{align*}

Projecting this to the normal direction, we note that $\hat{\nabla}_{F_{\maxidx}}F_{j}$
is already normal at the point, so $K$ projects that to the tangential direction,
and this term doesn't survive the projection. We are left with
\[
\hat{S}\left(  \nabla_{F_{\maxidx}}^{\perp}n\left(  \nabla\psi\right)  ^{\perp}
,F_{\maxidx}\right) =-
F_{\maxidx}\left(  \left(  n\left(  \nabla\psi\right)  \cdot KF_{i}\right)
g^{ij}\right)  \hat{S}\left(  KF_{j},F_{\maxidx}\right).
\]
We have taken diagonalizations so that only the $j=\maxidx$ term survives, so suffice to control
\[
F_{\maxidx}\left(  n\left(  \nabla\psi\right)  \cdot KF_{\maxidx}\right)  \hat{S}\left(
KF_{\maxidx},F_{\maxidx}\right).
\]

Next compute 
\begin{equation}
F_{\maxidx}\left(  n\left(  \nabla\psi\right)  \cdot KF_{\maxidx}\right)
=\hat{\nabla}_{F_{\maxidx}} n\left(  \nabla\psi\right)  \cdot K F_{\maxidx}
+ n\left(  \nabla\psi\right)\cdot K\hat{\nabla}_{F_{\maxidx}}F_{\maxidx}.
\label{thisone}
\end{equation}
To get control of
\[
\hat{\nabla}_{F_{\maxidx}}F_{\maxidx}
\]
observe that
\begin{equation}
\left(  \hat{\nabla}_{F_{\maxidx}}F_{\maxidx}\right)  ^{\perp}=A(F_{\maxidx},F_{\maxidx}) \label{seethe}
\end{equation}
is tensorial, so we may compute using
\begin{align*}
F_{\maxidx} &=\frac{1}{\sqrt{\lambda_{\maxidx}}}\widetilde{F}_{\maxidx}\text{ at }x,\\
\left(  \hat{\nabla}_{F_{\maxidx}}F_{\maxidx}\right)  ^{\perp}
&=A(F_{\maxidx},F_{\maxidx})=\frac{1}{\lambda_{\maxidx}}A(\widetilde{F}_{\maxidx},\widetilde{F}_{\maxidx})\\
&=-\frac{1}{\lambda_{\maxidx}}\left(  A(\widetilde{F}_{\maxidx},\widetilde{F}_{\maxidx})\cdot
K\widetilde{F}_{k}\right)  \widetilde{g}^{kk}K\widetilde{F}_{k}\\
&=- \frac{1}{\lambda_{\maxidx}}\left(  A(\widetilde{F}_{\maxidx},\widetilde{F}_{k})\cdot
K\widetilde{F}_{\maxidx}\right)  \widetilde{g}^{kk}K\widetilde{F}_{k}.
\end{align*}

Recall that
\[
\widetilde{F}_{i}=E_{i}+T_{i}^{\bar{s}}E_{\bar{s}}.
\]
Compute directly
\begin{align*}
A(\widetilde{F}_{\maxidx},\widetilde{F}_{k})
&=\hat{\nabla}_{\left(  E_{\maxidx}+T_{\maxidx}^{\bar{s}}E_{\bar{s}}\right)}
\left(  E_{k}+T_{k}^{\bar{s}}E_{\bar{s}}\right) \\
&=\Gamma_{\maxidx\,k}^{\alpha}E_{\alpha}
  +T_{\maxidx}^{\bar{s}}\Gamma_{\bar{s}k}^{\alpha}E_{\alpha}
  +T_{k\,\maxidx}^{\bar{s}}E_{\bar{s}}
  +T_{k}^{s}\Gamma_{\bar{s}\,\maxidx}^{\alpha}E_{\alpha}
  +T_{k}^{\bar{p}}(E_{\bar{p}}T_{\maxidx}^{\bar{s}})E_{\bar{s}}
  +T_{\maxidx}^{\bar{s}}T_{k}^{\bar{p}}\Gamma_{\bar{p}\bar{s}}^{\alpha}E_{\alpha}.
\end{align*}

After simplification,
\[
A(\widetilde{F}_{\maxidx},\widetilde{F}_{k})
=\Gamma_{\maxidx\,k}^{m}E_{m}+T_{k\,\maxidx}^{\bar{s}}E_{\bar{s}}.
\]

Thus
\begin{align}
A(\widetilde{F}_{\maxidx},\widetilde{F}_{k})\cdot K\widetilde{F}_{\maxidx}
&=\left(
\Gamma_{\maxidx\,k}^{m}E_{m}+T_{k\,\maxidx}^{\bar{s}}E_{\bar{s}}
\right)\cdot(E_{\maxidx}-\lambda_{\maxidx}E_{\overline{\maxidx}}) \notag\\
&=T_{k\,\maxidx}^{\maxidx}\left(-\frac12 c_{\overline{\maxidx}\,\maxidx}\right)
  -\lambda_{\maxidx}\Gamma_{\maxidx\,k}^{\maxidx}\left(-\frac12 c_{\overline{\maxidx}\,\maxidx}\right) \notag\\
&=-\frac12\left(T_{k\,\maxidx}^{\maxidx}
   -\lambda_{\maxidx}\Gamma_{\maxidx\,k}^{\maxidx}\right)
\label{cope}
\end{align}

Now differentiate
\[
\frac{S(\widetilde{F}_{\maxidx},\widetilde{F}_{\maxidx})}
{h(\widetilde{F}_{\maxidx},\widetilde{F}_{\maxidx})}
\]
in any $E_{k}$ direction, using 

\[
S(\widetilde{F}_{\maxidx},\widetilde{F}_{\maxidx})
=m_{\maxidx\maxidx}
+\bar{m}_{\bar{s}\bar{p}}T_{\maxidx}^{\bar{s}}T_{\maxidx}^{\bar{p}},
\qquad
h(\widetilde{F}_{\maxidx},\widetilde{F}_{\maxidx})
=-c_{\maxidx\bar{s}} T_{\maxidx}^{\bar{s}},
\]
(recalling we're using special coordinates type II (recall Corollary \ref{IIcoords})  and evaluating at the point) gives
\begin{align*}
E_{k}S(\widetilde{F}_{\maxidx},\widetilde{F}_{\maxidx})
 &=\bar{m}_{\overline{\maxidx}\,\overline{\maxidx},k}\lambda_{\maxidx}^{2}+2 \lambda_{\maxidx}T_{\maxidx\,k}^{\maxidx},\\
E_{k}h(\widetilde{F}_{\maxidx},\widetilde{F}_{\maxidx})
 &=-c_{\maxidx\overline{\maxidx}\,k}\lambda_{\maxidx}+T_{\maxidx\,k}^{\bar{\maxidx}}.
\end{align*}

Using that the maximum occurs at this point,
\[
h(\widetilde{F}_{\maxidx},\widetilde{F}_{\maxidx})
E_{k}S(\widetilde{F}_{\maxidx},\widetilde{F}_{\maxidx})
=
S(\widetilde{F}_{\maxidx},\widetilde{F}_{\maxidx})
E_{k}h(\widetilde{F}_{\maxidx},\widetilde{F}_{\maxidx})
\]

we obtain
\[
\left(  2\lambda_{\maxidx}-(1+\lambda^2_{\maxidx})\right)  T_{\maxidx\,k}^{\maxidx}
=
-(1+\lambda^2_{\maxidx})c_{\maxidx\overline{\maxidx}\,k}\lambda_{\maxidx}
-\bar{m}_{\overline{\maxidx}\,\overline{\maxidx},k}\lambda_{\maxidx}^{3},
\]

hence
\[
T_{\maxidx\,k}^{\maxidx}
=
\frac{-\lambda_{\maxidx}\left(
(1+\lambda^2_{\maxidx})c_{\maxidx\overline{\maxidx}\,k}-\bar{m}_{\overline{\maxidx}\,\overline{\maxidx},k}\lambda_{\maxidx}^{2}
\right)}
{-(1-\lambda_{\maxidx})^2}.
\]

From (\ref{cope}) and (\ref{seethe}) 
\begin{align*}
\left(  \hat{\nabla}_{F_{\maxidx}}F_{\maxidx}\right)  ^{\perp}
&=\frac{1}{\lambda_{\maxidx}}
  \left(  \frac{1}{2}(T_{k\,\maxidx}^{\maxidx}-\lambda_{\maxidx}\Gamma_{\maxidx\,k}^{\maxidx})\right)
  \frac{1}{\lambda_{k}}K\widetilde{F}_{k}\\
&=\left(
  \frac{1}{2}\frac{1}{\lambda_{\maxidx}}T_{k\,\maxidx}^{\maxidx}
  -\frac{1}{2}\Gamma_{\maxidx\,k}^{\maxidx}
 \right)
 \frac{(E_{k}-\lambda_{k}E_{\bar{k}})}{\lambda_{k}}\\
&=\left(
  \frac{1}{2}
   \frac{\lambda_{\maxidx}\left(
(1+\lambda^2_{\maxidx})c_{\maxidx\overline{\maxidx}\,k}-\bar{m}_{\overline{\maxidx}\,\overline{\maxidx},k}\lambda_{\maxidx}^{2}
\right)}
{(1-\lambda_{\maxidx})^2}
  -\frac{1}{2}\Gamma_{\maxidx\,k}^{\maxidx}
 \right)
 \frac{(E_{k}-\lambda_{k}E_{\bar{k}})}{\lambda_{k}}.
\end{align*}

Thus (we may assume that $\lambda_n<\frac{1}{2}$))
\[
\left\vert n\left(  \nabla\psi\right)  \cdot 
K\hat{\nabla}_{F_{\maxidx}}F_{\maxidx}\right\vert
\leq
\Vert nD\psi\Vert_{\hat{S}}
C\left(  1+ \frac{\lambda_1}{\lambda_n}\right)
\leq
C\left(  S_{\maxidx\maxidx}^{2}+1\right).
\]

Finally, recall (\ref{thisone})
\[
\hat{S}\left(
\left[  \hat{\nabla}_{F_{\maxidx}}
\left(  n(\nabla\psi)\right)^{\perp}\right]^{\perp},
F_{\maxidx}\right)
\leq
\Vert nD\psi\Vert_{\hat{S}}\Vert F_{\maxidx}\Vert_{\hat{S}}^{2}
+
C\left(  S_{\maxidx\maxidx}^{2}+1\right).
\]

\end{proof}

We conclude that there is a universal bound on $DT$ as shown below.

\begin{proposition}
Suppose that $(x,T(x))$ locally represents the generalized mean curvature flow. 
Assume that the flow remains in a compact set $Z$ that avoids the cut locus and on which the cross-curvature condition is positive. 
Then \[
\left\Vert DT\right\Vert =\sup_{m(V,V)=1}\left\Vert DT(V)\right\Vert _{\bar
{m}}\leq C(Z,\kappa,c,\rho,\bar{\rho},F_{0}).
\]
\end{proposition}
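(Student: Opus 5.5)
Our approach is a maximum principle argument for the quantity $\mathcal{R}_{\max}(t)=\max_{V\in TL_t,\,h(V,V)=1}\mathcal{R}(V)$, using the differential inequality of the key Claim in the previous subsection. First, note the relation between $\mathcal{R}$ and $\|DT\|$. At any point, in the coordinates of the previous subsection (normal for $m$, special of type I for $\bar M$), the $g$-orthonormal frame is $F_i=\lambda_i^{-1/2}E_i+\lambda_i^{1/2}E_{\bar\imath}$, and $\mathcal{R}(F_i)=\lambda_i+\lambda_i^{-1}$. Hence a bound $\mathcal{R}_{\max}\le C$ gives $\lambda_1\le C$ (and $\lambda_n\ge 1/C$). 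Since $\hat S=m+\bar m$ and $\bar m$ is built from $h$ precisely so that $\bar m(\bar\xi,\bar\xi)=\sum_s|\bar\xi^s|^2$ in these coordinates, we get $\|DT\|\le \lambda_1$ at the point. The comparison constants depend only on the compact set $Z$, through $c_{i\bar s}$ and its inverse. Thus it suffices to bound $\mathcal{R}_{\max}(t)$ uniformly in $t$.

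Next, we run the maximum principle. By compactness of $L$ and spacelikeness, $\mathcal{R}_{\max}(t)$ is finite and is attained for each $t$. It is Lipschitz in $t$, so by Hamilton's trick its upper Dini derivative is bounded by $\partial_t Q$ at a maximizing point $(x_0,t_0)$ and vector $F_\xi$, where $Q=S(F_\xi,F_\xi)/g(F_\xi,F_\xi)$ is extended locally with $F_\xi$ fixed in the normal coordinates. Here we need the first-order conditions at the spatial maximum, together with Proposition \ref{p1} (which gives $\sum_i\nabla_{F_i}\nabla_{F_i}S(F_\xi,F_\xi)\le 0$). These justify subtracting the Laplacian term in the Claim. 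The Claim then yields, whenever $S_{\xi\xi}\ge 10$,
\[
\frac{d^+}{dt}\mathcal{R}_{\max}\le \bar C\,\mathcal{R}_{\max}^2-\kappa\,\mathcal{R}_{\max}^{2+\frac{1}{n-1}}.
\]
The constants $\bar C,\kappa$ depend only on the following: $Z$ (through $c$ and its derivatives up to fourth order, $\hat R$, $\hat\nabla\hat S$, $\hat\nabla\hat\nabla\hat S$); $\rho,\bar\rho$ (through $\psi$); the cross-curvature lower bound; and the bound $\Lambda_0$ on $\det DT$. That last bound comes from Corollary \ref{thetamaxprinciple} and the initial immersion $F_0$. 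The right-hand side is negative once $\mathcal{R}_{\max}>\max\{10,(\bar C/\kappa)^{n-1}\}$. An ODE comparison then gives $\mathcal{R}_{\max}(t)\le\max\{\mathcal{R}_{\max}(0),10,(\bar C/\kappa)^{n-1}\}$ for all $t$, and therefore the claimed bound $C(Z,\kappa,c,\rho,\bar\rho,F_0)$ on $\|DT\|$.

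The step we expect to need the most care is verifying that all constants in the key Claim are genuinely uniform, so that nothing depends on the flow beyond $Z$ and $F_0$. The delicate part is Claim \ref{annoying}, where the bound $C(1+\lambda_1/\lambda_n)$ must be absorbed into $C S_{\xi\xi}^2$. This works because $\lambda_1,\lambda_n^{-1}\le S_{\xi\xi}$ at the maximizing point, which holds since $\xi\in\{1,n\}$ is chosen to realize the larger of $\lambda_1+\lambda_1^{-1}$ and $\lambda_n+\lambda_n^{-1}$. A second subtle point is the non-smoothness of the maximum over the unit tangent bundle when eigenvalues coincide. We would handle this with the viscosity/Dini formulation above, using the smooth local barrier $Q$, which touches $\mathcal{R}_{\max}$ from below at $(x_0,t_0)$.
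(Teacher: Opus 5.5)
Your proposal is correct and follows essentially the same route as the paper: a maximum principle for the slope ratio $\mathcal{R}$, driven by the superquadratic term $-\kappa S_{\xi\xi}^{2+\frac{1}{n-1}}$ of the key Claim, combined with the observation that $\|DT\|\le\lambda_1\le\mathcal{R}_{\max}$ in the adapted coordinates. Your Hamilton-trick and ODE-comparison formulation, with the explicit threshold $\max\{10,(\bar C/\kappa)^{n-1}\}$, makes rigorous what the paper states informally, namely that the maximal ratio must decrease once it is large.
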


\begin{proof}
At the initial time there is an initial bound on $DT.$ \ Observe that by taking
normal coordinates for $m$ and diagonalizing 
\[
\widetilde{F}_{i}=E_{i}+\lambda_{i}E_{\bar{\imath}}%
\]
we get \
\[
\sup_{m(V,V)=1}\left\Vert DT(V)\right\Vert _{\bar{m}}^{2}\leq\lambda_{1}^{2}.
\]
Recall that
\[
\frac{1}{\lambda_{n}}\geq\left(  \frac{\lambda_{1}}{\Lambda_{0}}\right)
^{\frac{1}{n-1}}%
\]
so if $\lambda_{1}$ is large we will have
\[
S_{\maxidx\maxidx}=\max\left\{  \frac{1}{\lambda_{n}},\lambda_{1}\right\}  >C_{2}%
\]
big enough so that
\[
C_{1}S_{\maxidx\maxidx }^{2}-\kappa S_{\maxidx\maxidx}^{2+\frac{1}{n-1}}<0
\]
in which case, the ratio given by (\ref{def:ratio}) must be decreasing at the maximum
point by the previous Proposition. An upper bound on the ratio gives an upper bound on $\lambda_1$.\ 
\end{proof}

\section{Higher Regularity and Convergence } 

\subsection{Preliminary Estimates}

For the remainder, we assume the submanifold is flowing under a cross-curvature condition.  

Recalling \eqref{def:cexp}, we note that locally there exists a scalar function $u$ satisfying
\[
Du + D_x c(x,T(x)) = 0.
\]

\begin{claim}
Let $u$ be as in (\ref{def:cexp}). \ The GMCF is the projection of a
vertical flow described by
\[
u_{t}=-2\theta.
\]

\end{claim}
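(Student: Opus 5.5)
The plan is to let $u=u(x,t)$ be the local potential from \eqref{def:cexp} for the graph $x\mapsto(x,T(x,t))$ over a fixed simply connected chart of $M$. Differentiating $Du+D_xc(x,T)=0$ in $x$ gives
\[
u_{ij}+c_{ij}+c_{i\bar s}T^{\bar s}_j=0,
\]
so $T^{\bar s}_j=-c^{\bar s i}(u_{ij}+c_{ij})$. Differentiating in $t$ gives $Du_t+c_{i\bar s}\,\partial_tT^{\bar s}=0$, hence
\[
\partial_t T^{\bar s}=-c^{\bar s i}\,(u_t)_i .
\]
So a vertical flow $V=(0,\partial_tT)$ is determined by the scalar $u_t$. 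The claim is that its normal projection equals $\vec H-n(\hat\nabla\psi)^\perp$ exactly when $u_t=-2\theta$, up to a time-dependent constant, which can be absorbed into $u$.

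The key step is to identify normal components through the symplectic pairing. Since $\omega(N,\cdot)$ restricted to $TL$ determines a normal vector $N$ (as $K$ maps $TL$ isomorphically onto the normal space), it suffices to compare $\omega(V,F_i)$ with $\omega(\vec H-n(\hat\nabla\psi)^\perp,F_i)$ on the basis $F_i=E_i+T^{\bar s}_iE_{\bar s}$. Tangential parts of $V$ pair to zero because $L$ is Lagrangian. By the identity \eqref{212},
\[
\omega(\vec H-n(\hat\nabla\psi)^\perp,F_i)=d\theta(F_i)=\partial_i\theta .
\]

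For the vertical vector I compute
\[
\omega(V,F_i)=h(KV,F_i)=-h(V,F_i)=-h(\partial_tT^{\bar s}E_{\bar s},E_i)=\tfrac12 c_{i\bar s}\,\partial_tT^{\bar s}=-\tfrac12(u_t)_i ,
\]
using $KV=-V$ for vertical $V$ and $h(E_i,E_{\bar s})=-\tfrac12c_{i\bar s}$. Equating the two gives $-\tfrac12 d u_t=d\theta$, i.e. $u_t=-2\theta+\text{const}$, and this constant is normalized away. Conversely, if $u_t=-2\theta$ then $(V)^\perp$ and the generalized mean curvature vector have the same $\omega$-pairing with $TL$, so they agree. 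Hence the vertical flow projects to GMCF, and reparametrizing by tangential diffeomorphisms recovers the immersion flow.

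I expect the main obstacle to be sign and normalization bookkeeping. Specifically, $\omega=h(K\cdot,\cdot)$, the factor $\tfrac12$ in $h$, and the sign convention $\alpha_V=\omega(V,\cdot)$ used in \eqref{212} must combine to yield exactly the coefficient $-2$. I would double-check this on a flat quadratic cost, where $\theta=\tfrac12\ln(\rho/(\bar\rho\det D^2u))$ up to convention. There the flow should reduce to the parabolic Monge–Ampère flow of \cite{KSW}, $u_t=\ln\det(\cdot)-\ln\rho+\ln\bar\rho$, which confirms the sign.

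A minor secondary point is that $u$ is only local. Since $\theta$ is globally defined, the prescription $u_t=-2\theta$ is consistent on overlaps of charts, because potentials there differ only by constants.
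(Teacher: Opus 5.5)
Your proof is correct and follows the paper's argument. The paper also differentiates $Du+D_xc(x,T)=0$ in time to get the vertical velocity. It then projects onto the normal bundle through the frame $K\tilde F_j$, and the coefficients $h(V,K\tilde F_i)=-\omega(V,\tilde F_i)$ are your symplectic pairings, so the result is $K\nabla\theta=\vec H-n(\hat\nabla\psi)^\perp$. Your use of \eqref{212} in place of special coordinates, together with your handling of the additive constant in $u$, is a cleaner packaging of the same computation.
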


\begin{proof}
Since $\theta$ is geometric, we can choose any coordinates for $\bar{M}$ and
verify this at that point. \ We compute the vertical change of $T$ by
differentiating \
\begin{align*}
u_{i}+c_{i}(x,T(x))  & =0\\
u_{it}+c_{i\bar{s}}(x,T(x))T_{t}^{\bar{s}}  & =0.
\end{align*}
So at the origin, in special coordinates, we have
\[
T_{t}^{\bar{s}}=-2\theta_{s}.
\]
That is for the vertical flow
\[
\frac{d}{dt}(x,T(x,t))=(0,-2D\theta).
\]
Project this to the normal direction. \
\begin{align*}
(0,D\theta)^{\perp}  & =-\left(  (0,-2D\theta)\cdot K\tilde{F}_{i}\right)
\tilde{g}^{ij}K\tilde{F}_{j}\\
& =\theta_{i}\tilde{g}^{ij}(KF_{j})\\
& =K\left(  \theta_{i}\tilde{g}^{ij}F_{j}\right)  \\
& =K\nabla\theta\\
& =(\vec{H}-n\left(  \nabla\psi\right)  ^{\perp}).
\end{align*}

\end{proof}

Observe that differentiating, we get
\begin{equation}
u_{ij}+c_{ij}+c_{i\bar{s}}(x,T(x))T_{j}^{\bar{s}}=0 \label{c2c1}
\end{equation}
or
\[
\det(u_{ij}+c_{ij})=\det(-c_{i\bar{s}})\det T_{j}^{\bar{s}}.
\]
Recall
\begin{align*}
2\theta & =\ln\rho-\ln\bar{\rho}-\ln\det DT\\
& =\ln\rho-\ln\bar{\rho}-\ln\det(u_{ij}+c_{ij})+\ln\det(-c_{i\bar{s}})
\end{align*}
so plugging in, we get
\begin{align*}
u_{t}  & =-2\theta.\\
& =\ln\det(u_{ij}+c_{ij})-\ln\rho+\ln\bar{\rho}-\ln\det(-c_{i\bar{s}}).
\end{align*}

Thus $u$ locally solves a Monge-Amp\`{e}re type equation, and enjoys $C^{2}$
estimates (from the estimate on $DT$ and (\ref{c2c1})) in choices of coordinates. On a compact manifold, every domain is
interior, and by the Evans-Krylov theorem, we conclude that $u$ enjoys $C^{2,\alpha}$
estimates. Then, by Schauder theory, we obtain uniform bounds of
all orders.

At this stage, we have uniform $C^{1,\alpha}$ bounds for $T$ up to time $t_{0}$. 
Because the coefficients of the parabolic system are uniformly parabolic and H\"older continuous, Schauder theory applies and yields bounds on derivatives of all orders. In particular, the flow extends past any time $t_{0}$, and global $C^{k,\alpha}$ bounds hold for all $k$.

Once we have fixed a set of charts, interior estimates will apply along the flow. 
We have shown:
\begin{claim}
Suppose that $S$ is bounded above along the flow on $[0,t_0)$.  Then all higher-order derivatives exist and the flow extends past $t_0$.
\end{claim}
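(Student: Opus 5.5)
The plan is to convert the bound on $S$ into a two-sided bound on $DT$, read the flow locally as a uniformly parabolic Monge--Amp\`ere type equation for the potential $u$, and then run the standard parabolic regularity chain.

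\emph{Step 1: two-sided bound on $DT$.} Let $\Lambda=\sup\mathcal{R}$ over $[0,t_0)$. At any point, in the coordinates used for the slope ratio, the eigenvalues of $DT$ satisfy $\lambda_i+\lambda_i^{-1}\le \Lambda$. Hence $\Lambda^{-1}\le\lambda_i\le\Lambda$. The induced metric is therefore uniformly equivalent to $m$, and the immersion is uniformly spacelike. By Corollary \ref{thetamaxprinciple}, $\theta$ is also bounded, which gives two-sided control on $\det DT$ independently.

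\emph{Step 2: localize to a potential.} Fix a finite atlas of $m$-balls in $M$ whose radius is below the injectivity radius and does not depend on $t$. This is possible because the $DT$ bound makes the graph uniformly Lipschitz, so Lemma \ref{graphical} applies on balls of uniform size. On each such ball the local potential $u$ from (\ref{def:cexp}) exists. By the preceding claim, up to tangential reparametrization, $u$ solves
\[
u_t=\ln\det(u_{ij}+c_{ij}(x,T))-\ln\rho+\ln\bar\rho(T)-\ln\det(-c_{i\bar s}(x,T)),
\]
where $T$ is determined implicitly from $Du$ through the twist condition. By (\ref{c2c1}) and Step 1, $D^2u+D^2_xc$ has eigenvalues pinched between positive constants. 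So the operator is uniformly parabolic and concave in $D^2u$ ($\ln\det$ is concave on positive matrices), and its lower-order dependence on $(x,Du)$ is smooth because we stay in a compact set avoiding $\mathscr{C}$.

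\emph{Step 3: regularity and extension.} Apply parabolic Evans--Krylov (Krylov--Safonov plus concavity) to get interior $C^{2+\alpha,1+\alpha/2}$ bounds on $u$ on slightly smaller balls, uniform up to $t_0$. Compactness of $M$ gives interior balls everywhere, and compactness of $L$ lets a finite cover suffice. Differentiate the equation and apply Schauder theory iteratively to get $C^{k,\alpha}$ bounds on $u$, and hence on $T$ and the immersion, for every $k$. These bounds are uniform as $t\to t_0$. The immersions therefore converge smoothly to a spacelike Lagrangian immersion $F(t_0)$ lying in the same compact set, by preservation of the Lagrangian condition. Restart the flow using short-time existence (Section 3) to extend it past $t_0$.

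\emph{Main obstacle.} The delicate point is Step 2. The potential $u$ exists only locally, and the normal flow differs from the vertical flow by a tangential diffeomorphism. I would handle this by working with the vertical (graphical) flow on each chart, since the two flows give the same submanifolds. I would also ensure the charts do not shrink, and that the additive constants in $u$ are fixed: normalize $u(x_0,t)$ by integrating $u_t=-2\theta$, which is bounded. Only derivatives of $u$ enter the estimates, so these constants are harmless.
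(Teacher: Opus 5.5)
Your proposal is correct and follows essentially the same route as the paper: the bound on $S$ gives two-sided control of the eigenvalues of $DT$. Through (\ref{c2c1}) this yields uniform $C^2$ bounds on the local potential $u$, which solves the Monge--Amp\`ere type flow obtained from $u_t=-2\theta$, and Evans--Krylov plus Schauder bootstrapping then give all higher derivatives and let the flow continue past $t_0$. Your extra care with the concavity of $\ln\det$, time-independent charts, the additive constant in $u$, and restarting via short-time existence makes explicit what the paper states briefly; the uniform charts also follow directly from the fact that the projection $L\to M$ of a compact spacelike immersion is a local diffeomorphism of compact manifolds, hence a covering map, so no Lipschitz argument is needed there.
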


\subsection{Harnack Inequality and Exponential Convergence} 

Recalling claim \ref{thetalution}, we note that $\theta$ satisfies

$$\theta_t -\Delta_{-n\psi} \theta=0$$
where $\Delta_{-n\psi}$ is the weighted Laplacian.   Note that this flow tracks the change of $\theta$ as $x$ evolves under the normal flow. This is different from the vertical flow, which produces a similar weighted operator. Note that the vertical and normal evolution expressions for a quantity will always differ by a gradient term for that quantity.


Given the bounds on third derivatives of $T$, we obtain a lower bound on the weighted Ricci curvature associated with the operator $\Delta_{-n\psi}$. We are therefore exactly in the setting of \cite[Prop.~2.7]{AK}. The argument given there applies directly, and we conclude the same by defining

\[
f=\log\left(  \theta+c\right)
\]
for an appropriate $c$ and
\[
F=t\left(  \left\Vert \nabla_{g}f\right\Vert _{g}^{2}-\alpha\partial
_{t}f\right).
\]
So we get (for $\alpha=2)$
\[
LF+2\langle\nabla f,\nabla F\rangle\geq\frac{1}{t}\left(  C_{1}F^{2}%
-F-C_{2}t^{2}+C_{3}t\left\Vert \nabla f\right\Vert _{g}^{2}F\right)  .
\]
Applying this at the maximum for $F$ we get
\[
C_{1}F^{2}-F-C_{2}t^{2}+C_{3}t\left\Vert \nabla f\right\Vert _{g}^{2}F\leq0
\]
that is
\[
C_{1}F^{2}-F-C_{2}t^{2}\leq0
\]
or
\[
F(x,t)\leq\frac{1+\sqrt{1+4C_{1}C_{2}t^{2}}}{2C_{1}}\leq\tilde{C}_{1}%
+\tilde{C}_{2}t.
\]
That is, for positive $t$
\[
\left(  \left\vert \nabla f\right\vert ^{2}-2f_{t}\right)  \leq\frac{\tilde
{C}_{1}}{t}+\tilde{C}_{2}%
\]
so we have
\[
f_{t}\geq\frac{\left\vert \nabla f\right\vert ^{2}}{2}-\frac{1}{2}\left(
\frac{\tilde{C}_{1}}{t}+\tilde{C}_{2}\right).
\]
Now consider a path $\gamma:[0,1]\rightarrow L\times\lbrack t_{1},t_{2}]$ such
that
\begin{align*}
\gamma(0) &  =(y,t_{2})\\
\gamma(1) &  =\left(  x,t_{1}\right).
\end{align*}
Assume that the path projects to a geodesic in the metric on $L$ at $t_{1}$ with
constant speed. \ Then
\[
f(x,t_{1})-f\left(  y,t_{2}\right)  \leq\int_{0}^{1}\left\{  \left\vert \nabla
f(\gamma(s),s)\right\vert d_{(t_{1})}(x,y)+\left(  t_{1}-t_{2}\right)
f_{t}(\gamma(s),s)\right\}  ds.
\]
As $t_{1}<t_{2}$, we have
\begin{align*}
\left(  t_{1}-t_{2}\right)  f_{t}(\gamma(s),s) &  \leq\left(  t_{1}%
-t_{2}\right)  \left(  \left\vert \nabla f\right\vert ^{2}-\left(
\frac{\tilde{C}_{1}}{t}+\tilde{C}_{2}\right)  \right)  \\
&  =-\left(  t_{2}-t_{1}\right)  \left(  \left\vert \nabla f\right\vert
^{2}-\left(  \frac{\tilde{C}_{1}}{t}+\tilde{C}_{2}\right)  \right)  \\
&  =\left(  t_{2}-t_{1}\right)  \left(  -\left\vert \nabla f\right\vert
^{2}+\left(  \frac{\tilde{C}_{1}}{t}+\tilde{C}_{2}\right)  \right)
\end{align*}
and 
\[
f(x,t_{1})-f\left(  y,t_{2}\right)  \leq\int_{0}^{1}\left\{  \left\vert \nabla
f(\gamma(s),s)\right\vert d_{(t_{1})}(x,y)+\left(  t_{2}-t_{1}\right)  \left(
-\left\vert \nabla f\right\vert ^{2}+\left(  \frac{\tilde{C}_{1}}{t(s)}%
+\tilde{C}_{2}\right)  \right)  \right\}  ds.
\]
Using
\[
\left\vert \nabla f(\gamma(s),s)\right\vert d_{(t_{1})}(x,y)\leq\left\vert
\nabla f(\gamma(s),s)\right\vert ^{2}\left(  t_{2}-t_{1}\right)
+\frac{d_{(t_{1})}^{2}(x,y)}{4\left(  t_{2}-t_{1}\right)  }%
\]
 we have%
\begin{align*}
f(x,t_{1})-f\left(  y,t_{2}\right)   &  \leq\int_{0}^{1}\left\{
\frac{d_{(t_{1})}^{2}(x,y)}{4\left(  t_{2}-t_{1}\right)  }+\left\vert \nabla
f(\gamma(s),s)\right\vert ^{2}\left(  t_{2}-t_{1}\right)  -\left\vert \nabla
f\right\vert ^{2}\left(  t_{2}-t_{1}\right)  +\left(  t_{2}-t_{1}\right)
\left(  \frac{\tilde{C}_{1}}{t(s)}+\tilde{C}_{2}\right)  \right\}  ds\\
&  \leq\int_{0}^{1}\left\{  \frac{d_{(t_{1})}^{2}(x,y)}{4\left(  t_{2}%
-t_{1}\right)  }+\left(  t_{2}-t_{1}\right)  \left(  \frac{\tilde{C}_{1}%
}{t(s)}+\tilde{C}_{2}\right)  \right\}  ds\\
&  \leq\frac{d_{(t_{1})}^{2}(x,y)}{4\left(  t_{2}-t_{1}\right)  }+\tilde
{C}_{2}\left(  t_{2}-t_{1}\right)  -\int_{t_{2}}^{t_{1}}\frac{\tilde{C}_{1}%
}{t}dt\\
&  =\frac{d_{(t_{1})}^{2}(x,y)}{4\left(  t_{2}-t_{1}\right)  }+\tilde{C}%
_{2}\left(  t_{2}-t_{1}\right)  +\tilde{C}_{1}\log\left(  \frac{t_{2}}{t_{1}%
}\right)
\end{align*}
noting that
\begin{align*}
\frac{dt}{ds} &  =-\left(  t_{2}-t_{1}\right)  ds\\
t(0) &  =t_{2}\\
t(1) &  =t_{1}.
\end{align*}
This gives us that
\[
\log\left(  \theta+c\right)  (x,t_{1})\leq\log\left(  \theta+c\right)
(y,t_{2})+\frac{d_{(t_{1})}^{2}(x,y)}{4\left(  t_{2}-t_{1}\right)  }+\tilde
{C}_{2}\left(  t_{2}-t_{1}\right)  +\tilde{C}_{1}\log\left(  \frac{t_{2}%
}{t_{1}}\right)
\]
which we can exponentiate to get
\[
\left(  \theta+c\right)  (x,t_{1})\leq\left(  \theta+c\right)  (y,t_{2}%
)\exp\left(  \frac{d_{(t_{1})}^{2}(x,y)}{4\left(  t_{2}-t_{1}\right)  }%
+\tilde{C}_{2}\left(  t_{2}-t_{1}\right)  \right)  \left(  \frac{t_{2}}{t_{1}%
}\right)  ^{\tilde{C}_{1}}.
\]
If we assume that $\theta>0$ we can let $c\rightarrow0$%
\[
\theta(x,t_{1})\leq\theta(y,t_{2})\exp\left(  \frac{\text{diam}_{g_{t_{1}}%
}^{2}\text{(}M)}{4\left(  t_{2}-t_{1}\right)  }+\tilde{C}_{2}\left(
t_{2}-t_{1}\right)  \right)  \left(  \frac{t_{2}}{t_{1}}\right)  ^{\tilde
{C}_{1}}.
\]
Thus for $t_{1}\geq1$ and $t_{2}=t_{1}+1/2$ we have%
\[
\theta(x,t_{1})\leq C_{3}\theta(y,t_{2})
\]
for all $x,y.$ \ 

Now we may apply the standard convergence argument - see for example \cite[Section 7.1]{KSW}.
\ \ It follows that the oscillation of $\theta$ decreases exponential to a
constant. \ The convergence of the flow is in all orders, so we conclude that the limit as
$t\rightarrow\infty$ must be a manifold satisfying
\[
D\theta\equiv0
\]
that is
\[
\vec{H}-n\left(  \nabla\psi\right)  ^{\perp}=0.
\]

\bibliographystyle{amsalpha}
\bibliography{ot}

@misc{behrndt2008,
      title={Lagrangian mean curvature flow in almost Kaehler-Einstein manifolds}, 
      author={Tapio Behrndt},
      year={2008},
      eprint={0812.4256},
      archivePrefix={arXiv},
      primaryClass={math.DG},
      url={https://arxiv.org/abs/0812.4256}, 
}

@incollection {SmoczykSurvey,
    AUTHOR = {Smoczyk, Knut},
     TITLE = {Mean curvature flow in higher codimension: introduction and
              survey},
 BOOKTITLE = {Global differential geometry},
    SERIES = {Springer Proc. Math.},
    VOLUME = {17},
     PAGES = {231--274},
 PUBLISHER = {Springer, Heidelberg},
      YEAR = {2012},
      ISBN = {978-3-642-22841-4; 978-3-642-22842-1},
   MRCLASS = {53C44},
  MRNUMBER = {3289845},
MRREVIEWER = {Karsten\ T.\ Gimre},
       DOI = {10.1007/978-3-642-22842-1\_9},
       URL = {https://doi.org/10.1007/978-3-642-22842-1_9},
}

@misc{S96,
      title={A canonical way to deform a Lagrangian submanifold}, 
      author={Knut Smoczyk},
      year={1996},
      eprint={dg-ga/9605005},
      archivePrefix={arXiv},
      primaryClass={dg-ga},
      url={https://arxiv.org/abs/dg-ga/9605005}, 
}

@incollection {Bryant,
    AUTHOR = {Bryant, Robert L.},
     TITLE = {Minimal {L}agrangian submanifolds of {K}\"ahler-{E}instein
              manifolds},
 BOOKTITLE = {Differential geometry and differential equations ({S}hanghai,
              1985)},
    SERIES = {Lecture Notes in Math.},
    VOLUME = {1255},
     PAGES = {1--12},
 PUBLISHER = {Springer, Berlin},
      YEAR = {1987},
      ISBN = {3-540-17849-X},
   MRCLASS = {53C42 (53C55 58E15)},
  MRNUMBER = {895393},
MRREVIEWER = {Wolfgang\ Ziller},
       DOI = {10.1007/BFb0077676},
       URL = {https://doi.org/10.1007/BFb0077676},
}

@article {Smo99,
    AUTHOR = {Smoczyk, Knut},
     TITLE = {Harnack inequality for the {L}agrangian mean curvature flow},
   JOURNAL = {Calc. Var. Partial Differential Equations},
  FJOURNAL = {Calculus of Variations and Partial Differential Equations},
    VOLUME = {8},
      YEAR = {1999},
    NUMBER = {3},
     PAGES = {247--258},
      ISSN = {0944-2669,1432-0835},
   MRCLASS = {53C44 (53D12 58E15)},
  MRNUMBER = {1688545},
       DOI = {10.1007/s005260050125},
       URL = {https://doi.org/10.1007/s005260050125},
}

@article {Gold,
    AUTHOR = {Goldstein, Edward},
     TITLE = {Calibrated fibrations},
   JOURNAL = {Comm. Anal. Geom.},
  FJOURNAL = {Communications in Analysis and Geometry},
    VOLUME = {10},
      YEAR = {2002},
    NUMBER = {1},
     PAGES = {127--150},
      ISSN = {1019-8385,1944-9992},
   MRCLASS = {53C38 (32Q25)},
  MRNUMBER = {1894143},
MRREVIEWER = {Peng\ Lu},
       DOI = {10.4310/CAG.2002.v10.n1.a6},
       URL = {https://doi.org/10.4310/CAG.2002.v10.n1.a6},
}

@article{W2025,
  title={Minimal Lagrangian submanifolds of weighted Kim--McCann metrics},
  author={Warren, Micah W},
  journal={Canadian Mathematical Bulletin},
  pages={1--16},
  year={2025},
  publisher={Canadian Mathematical Society}
}

@article {BLMR,
    AUTHOR = {Brendle, Simon and L\'eger, Flavien and McCann, Robert J. and
              Rankin, Cale},
     TITLE = {A geometric approach to apriori estimates for optimal
              transport maps},
   JOURNAL = {J. Reine Angew. Math.},
  FJOURNAL = {Journal f\"ur die Reine und Angewandte Mathematik. [Crelle's
              Journal]},
    VOLUME = {817},
      YEAR = {2024},
     PAGES = {251--266},
      ISSN = {0075-4102,1435-5345},
   MRCLASS = {53C40 (35Q49 49Q22 53C50)},
  MRNUMBER = {4834886},
       DOI = {10.1515/crelle-2024-0071},
       URL = {https://doi.org/10.1515/crelle-2024-0071},
}

@article {LV,
    AUTHOR = {L\'eger, Flavien and Vialard, Fran\c cois-Xavier},
     TITLE = {A geometric {L}aplace method},
   JOURNAL = {Pure Appl. Anal.},
  FJOURNAL = {Pure and Applied Analysis},
    VOLUME = {5},
      YEAR = {2023},
    NUMBER = {4},
     PAGES = {1041--1080},
      ISSN = {2578-5885,2578-5893},
   MRCLASS = {53C15 (49Q22 53B12)},
  MRNUMBER = {4680531},
       DOI = {10.2140/paa.2023.5.1041},
       URL = {https://doi.org/10.2140/paa.2023.5.1041},
}

@article {CSS,
    AUTHOR = {Chursin, Mykhaylo and Sch\"afer, Lars and Smoczyk, Knut},
     TITLE = {Mean curvature flow of space-like {L}agrangian submanifolds in
              almost para-{K}\"ahler manifolds},
   JOURNAL = {Calc. Var. Partial Differential Equations},
  FJOURNAL = {Calculus of Variations and Partial Differential Equations},
    VOLUME = {41},
      YEAR = {2011},
    NUMBER = {1-2},
     PAGES = {111--125},
      ISSN = {0944-2669,1432-0835},
   MRCLASS = {53C44 (53C15 53D12)},
  MRNUMBER = {2782799},
MRREVIEWER = {Paolo\ Piccinni},
       DOI = {10.1007/s00526-010-0355-x},
       URL = {https://doi.org/10.1007/s00526-010-0355-x},
}

@inproceedings{dazord1981geometrie,
  title={Sur la g{\'e}om{\'e}trie des sous-fibr{\'e}s et des feuilletages lagrangiens},
  author={Dazord, Pierre},
  booktitle={Annales scientifiques de l'{\'E}cole Normale Sup{\'e}rieure},
  volume={14},
  number={4},
  pages={465--480},
  year={1981}
}

@article{joyce2001lectures,
  title={Lectures on Calabi-Yau and special Lagrangian geometry},
  author={Joyce, Dominic},
  journal={arXiv preprint math/0108088},
  year={2001}
}

@article{bryant2000second,
  title={Second order families of special Lagrangian 3-folds},
  author={Bryant, Robert L},
  journal={arXiv preprint math.DG/0007128},
  year={2000}
}

@article {Para,
    AUTHOR = {Cruceanu, V. and Fortuny, P. and Gadea, P. M.},
     TITLE = {A survey on paracomplex geometry},
   JOURNAL = {Rocky Mountain J. Math.},
  FJOURNAL = {The Rocky Mountain Journal of Mathematics},
    VOLUME = {26},
      YEAR = {1996},
    NUMBER = {1},
     PAGES = {83--115},
      ISSN = {0035-7596,1945-3795},
   MRCLASS = {53C56 (53C15)},
  MRNUMBER = {1386154},
MRREVIEWER = {Sorin\ Dragomir},
       DOI = {10.1216/rmjm/1181072105},
       URL = {https://doi.org/10.1216/rmjm/1181072105},
}

@article{Wood,
  title={Singularities of Lagrangian Mean Curvature Flow},
  author={Albert Wood},
  journal={ Doctoral thesis, UCL (University College London)},
  year={2020}
}

@article {SWAsianJMath,
    AUTHOR = {Smoczyk, Knut and Wang, Mu-Tao},
     TITLE = {Generalized {L}agrangian mean curvature flows in symplectic
              manifolds},
   JOURNAL = {Asian J. Math.},
  FJOURNAL = {Asian Journal of Mathematics},
    VOLUME = {15},
      YEAR = {2011},
    NUMBER = {1},
     PAGES = {129--140},
      ISSN = {1093-6106,1945-0036},
   MRCLASS = {53C44 (53D12 53D35)},
  MRNUMBER = {2786468},
MRREVIEWER = {John\ Urbas},
       DOI = {10.4310/AJM.2011.v15.n1.a7},
       URL = {https://doi.org/10.4310/AJM.2011.v15.n1.a7},
}

@incollection {B,
    AUTHOR = {Behrndt, Tapio},
     TITLE = {Generalized {L}agrangian mean curvature flow in {K}\"ahler
              manifolds that are almost {E}instein},
 BOOKTITLE = {Complex and differential geometry},
    SERIES = {Springer Proc. Math.},
    VOLUME = {8},
     PAGES = {65--79},
 PUBLISHER = {Springer, Heidelberg},
      YEAR = {2011},
      ISBN = {978-3-642-20299-5},
   MRCLASS = {53C44},
  MRNUMBER = {2964468},
MRREVIEWER = {Yuan\ Yuan},
       DOI = {10.1007/978-3-642-20300-8\_3},
       URL = {https://doi.org/10.1007/978-3-642-20300-8_3},
}

@article{paletal,
  title={Wasserstein mirror gradient flow as the limit of the Sinkhorn algorithm},
  author={Deb, Nabarun and Kim, Young-Heon and Pal, Soumik and Schiebinger, Geoffrey},
  journal={arXiv preprint arXiv:2307.16421},
  year={2023}
}

@article{berman2020sinkhorn,
  title={The Sinkhorn algorithm, parabolic optimal transport and geometric Monge--Amp{\`e}re equations},
  author={Berman, Robert J},
  journal={Numerische Mathematik},
  volume={145},
  number={4},
  pages={771--836},
  year={2020},
  publisher={Springer}
}

@article{joyce2015conjectures,
  title={Conjectures on Bridgeland stability for Fukaya categories of Calabi--Yau manifolds, special Lagrangians, and Lagrangian mean curvature flow},
  author={Joyce, Dominic},
  journal={EMS Surveys in Mathematical Sciences},
  volume={2},
  number={1},
  pages={1--62},
  year={2015}
}

@article{thomas2001special,
  title={Special Lagrangians, stable bundles and mean curvature flow},
  author={Thomas, Richard P and Yau, S-T},
  journal={COMMUN ANAL GEOM 10 ( 5 ) 1075 - 1113},
  year={2002}
}

@article {LS,
    AUTHOR = {Li, Guanghan and Salavessa, Isabel M. C.},
     TITLE = {Mean curvature flow of spacelike graphs},
   JOURNAL = {Math. Z.},
  FJOURNAL = {Mathematische Zeitschrift},
    VOLUME = {269},
      YEAR = {2011},
    NUMBER = {3-4},
     PAGES = {697--719},
      ISSN = {0025-5874,1432-1823},
   MRCLASS = {53C44 (53C21)},
  MRNUMBER = {2860260},
MRREVIEWER = {Andreas\ Savas-Halilaj},
       DOI = {10.1007/s00209-010-0768-4},
       URL = {https://doi.org/10.1007/s00209-010-0768-4},
}

@article {MTW,
    AUTHOR = {Ma, Xi-Nan and Trudinger, Neil S. and Wang, Xu-Jia},
     TITLE = {Regularity of potential functions of the optimal
              transportation problem},
   JOURNAL = {Arch. Ration. Mech. Anal.},
  FJOURNAL = {Archive for Rational Mechanics and Analysis},
    VOLUME = {177},
      YEAR = {2005},
    NUMBER = {2},
     PAGES = {151--183},
      ISSN = {0003-9527,1432-0673},
   MRCLASS = {35J60 (35B65 35J20 49Q20)},
  MRNUMBER = {2188047},
MRREVIEWER = {Luisa\ Moschini},
       DOI = {10.1007/s00205-005-0362-9},
       URL = {https://doi.org/10.1007/s00205-005-0362-9},
}

@article {KSW,
    AUTHOR = {Kim, Young-Heon and Streets, Jeffrey and Warren, Micah},
     TITLE = {Parabolic optimal transport equations on manifolds},
   JOURNAL = {Int. Math. Res. Not. IMRN},
  FJOURNAL = {International Mathematics Research Notices. IMRN},
      YEAR = {2012},
    NUMBER = {19},
     PAGES = {4325--4350},
      ISSN = {1073-7928,1687-0247},
   MRCLASS = {58J35 (35K55 35R01 53C20)},
  MRNUMBER = {2981711},
MRREVIEWER = {Francesco\ Rossi},
       DOI = {10.1093/imrn/rnr188},
       URL = {https://doi.org/10.1093/imrn/rnr188},
}

@book {Joyce,
    AUTHOR = {Joyce, Dominic D.},
     TITLE = {Riemannian holonomy groups and calibrated geometry},
    SERIES = {Oxford Graduate Texts in Mathematics},
    VOLUME = {12},
 PUBLISHER = {Oxford University Press, Oxford},
      YEAR = {2007},
     PAGES = {x+303},
      ISBN = {978-0-19-921559-1},
   MRCLASS = {53C38 (53C29)},
  MRNUMBER = {2292510},
MRREVIEWER = {Spiro\ Karigiannis},
}

@article {AK,
    AUTHOR = {Abedin, Farhan and Kitagawa, Jun},
     TITLE = {Exponential convergence of parabolic optimal transport on
              bounded domains},
   JOURNAL = {Anal. PDE},
  FJOURNAL = {Analysis \& PDE},
    VOLUME = {13},
      YEAR = {2020},
    NUMBER = {7},
     PAGES = {2183--2204},
      ISSN = {2157-5045,1948-206X},
   MRCLASS = {49Q22 (35K96 58J35)},
  MRNUMBER = {4175823},
MRREVIEWER = {Heming\ Jiao},
       DOI = {10.2140/apde.2020.13.2183},
       URL = {https://doi.org/10.2140/apde.2020.13.2183},
}

@article {KM,
    AUTHOR = {Kim, Young-Heon and McCann, Robert J.},
     TITLE = {Continuity, curvature, and the general covariance of optimal
              transportation},
   JOURNAL = {J. Eur. Math. Soc. (JEMS)},
  FJOURNAL = {Journal of the European Mathematical Society (JEMS)},
    VOLUME = {12},
      YEAR = {2010},
    NUMBER = {4},
     PAGES = {1009--1040},
      ISSN = {1435-9855,1435-9863},
   MRCLASS = {49Q20 (58E25)},
  MRNUMBER = {2654086},
MRREVIEWER = {Lorenzo\ Brasco},
       DOI = {10.4171/JEMS/221},
       URL = {https://doi.org/10.4171/JEMS/221},
}

@article {KMW,
    AUTHOR = {Kim, Young-Heon and McCann, Robert J. and Warren, Micah},
     TITLE = {Pseudo-{R}iemannian geometry calibrates optimal
              transportation},
   JOURNAL = {Math. Res. Lett.},
  FJOURNAL = {Mathematical Research Letters},
    VOLUME = {17},
      YEAR = {2010},
    NUMBER = {6},
     PAGES = {1183--1197},
      ISSN = {1073-2780},
   MRCLASS = {49Q20 (49Q05 53C38)},
  MRNUMBER = {2729641},
MRREVIEWER = {Luigi\ De Pascale},
       DOI = {10.4310/MRL.2010.v17.n6.a16},
       URL = {https://doi.org/10.4310/MRL.2010.v17.n6.a16},
}

@article {MM,
    AUTHOR = {Mantegazza, Carlo and Martinazzi, Luca},
     TITLE = {A note on quasilinear parabolic equations on manifolds},
   JOURNAL = {Ann. Sc. Norm. Super. Pisa Cl. Sci. (5)},
  FJOURNAL = {Annali della Scuola Normale Superiore di Pisa. Classe di
              Scienze. Serie V},
    VOLUME = {11},
      YEAR = {2012},
    NUMBER = {4},
     PAGES = {857--874},
      ISSN = {0391-173X,2036-2145},
   MRCLASS = {58J35 (35K59 35R01)},
  MRNUMBER = {3060703},
MRREVIEWER = {Boubaker-Khaled\ Sadallah},
}

@book {MMMT,
    AUTHOR = {Mitrea, Dorina and Mitrea, Irina and Mitrea, Marius and
              Taylor, Michael},
     TITLE = {The {H}odge-{L}aplacian},
    SERIES = {De Gruyter Studies in Mathematics},
    VOLUME = {64},
      NOTE = {Boundary value problems on Riemannian manifolds},
 PUBLISHER = {De Gruyter, Berlin},
      YEAR = {2016},
     PAGES = {ix+516},
      ISBN = {978-3-11-048266-9; 978-3-11-048438-0; 978-3-11-048339-0},
   MRCLASS = {58J32 (35R01 58Jxx)},
  MRNUMBER = {3586566},
MRREVIEWER = {Thomas\ Schick},
       DOI = {10.1515/9783110484380},
       URL = {https://doi.org/10.1515/9783110484380},
}

@book {CC,
	TITLE = {Fully nonlinear elliptic equations},
	AUTHOR = {Caffarelli, Luis A and Cabr{\'e}, Xavier},
	VOLUME = {43},
	YEAR = {1995},
	PUBLISHER = {American Mathematical Soc.},
	EPRINT = {hep-ph/9609357}
}

@book {Lee,
    AUTHOR = {Lee, John M.},
     TITLE = {Introduction to smooth manifolds},
    SERIES = {Graduate Texts in Mathematics},
    VOLUME = {218},
   EDITION = {Second},
 PUBLISHER = {Springer, New York},
      YEAR = {2013},
     PAGES = {xvi+708},
      ISBN = {978-1-4419-9981-8},
   MRCLASS = {58-01 (53-01 57-01)},
  MRNUMBER = {2954043},
}

\end{document}